\newcommand \M{\mathfrak{M}}
\newcommand \Image{\mathrm{Im}}
\newcommand \Aff{\mathrm{Aff}}
\newcommand \Ker{\mathrm{Ker}}
\newcommand \X{\mathcal{X}}
\newcommand \N{\mathbb N}
\newcommand \Z{\mathbb Z}
\newcommand \R{\mathbb R}
\newcommand \Class{\mathcal C}
\newcommand{\Deck}{\mathrm{Deck}}
\newcommand{\Span}{\mathrm{Span}}
\newcommand{\Dim}{\mathrm{dim}}
\newcommand{\DK}{\mathcal{D}K}
\DeclareMathOperator \GL {GL}
\DeclareMathOperator \Or {O}
\DeclareMathOperator \RCD {\mathrm{RCD}}
\DeclareMathOperator \GH {d_{\mathrm{GH}}}
\DeclareMathOperator \D {\mathfrak{D}}
\DeclareMathOperator \CD {\mathrm{CD}}
\DeclareMathOperator \Iso {Iso}
\DeclareMathOperator \Diam {Diam}
\DeclareMathOperator \Mass {Mass}
\DeclareMathOperator \Spt {Spt}
\DeclareMathOperator \Dis {Dis}
\DeclareMathOperator \id {id}
\DeclareMathOperator \dist {\mathrm{d}}
\DeclareMathOperator \Fix {\mathrm{Fix}}
\DeclareMathOperator \m {\mathfrak{m}}
\DeclareMathOperator \diag {diag}
\DeclareMathOperator \dha {\dist^{\R^3}_{\mathrm{H}}}
\DeclareMathOperator \de {\dist_{\mathrm{E}}}
\DeclareMathOperator \tK {\tilde{\mathcal{K}}_{2\leq3}}
\DeclareMathOperator \Ks {\mathcal{K}^s_{2\leq3}}
\theoremstyle{definition}
\newtheorem{Definition}{Definition}[section]
\theoremstyle{definition}
\newtheorem{Theorem}[Definition]{Theorem}
\theoremstyle{definition}
\newtheorem{Proposition}[Definition]{Proposition}
\theoremstyle{definition}
\newtheorem{Notation}[Definition]{Notation}
\theoremstyle{definition}
\newtheorem{question}[Definition]{Question}
\theoremstyle{definition}
\theoremstyle{definition}
\newtheorem{Lemma}[Definition]{Lemma}
\theoremstyle{remark}
\newtheorem{Remark}[Definition]{Remark}
\begin{document}

\title{Contractibility of moduli spaces of {RCD}({0},{2})-structures}

\author{Dimitri Navarro}
\address{University of Oxford, Mathematical Institute}
\email{navarro@maths.ox.ac.uk}

\date{\today}

\begin{abstract}
This paper focuses on $\RCD(0,2)$-spaces, i.e. possibly non-smooth metric measure spaces with nonnegative Ricci curvature and dimension at most $2$. First, we establish a list of the compact topological spaces admitting an $\RCD(0,2)$-structure. We then describe the moduli space of $\RCD(0,2)$-structures for each space and show that it is contractible.\end{abstract}

\maketitle

\tableofcontents

\newpage

\section{Introduction}

A fundamental problem in Riemannian geometry is to study the existence of metrics satisfying a specific curvature constraint (e.g. nonnegative Ricci, scalar, or sectional curvature). When the existence problem finds a positive answer, it is interesting to describe the space of such metrics. The standard way to interpret this last problem is to study the topology of the moduli space associated with metrics satisfying the desired constraint. In the past two decades, moduli spaces of metrics with negative sectional curvature/positive scalar curvature have been studied (see \cite{Tuschmann-Wraith_15} for an introduction). More recently, results concerning moduli spaces of nonnegatively Ricci curved metrics have been established as well (see \cite{Tuschmann-Wiemeler_21}).\\
Instead of working with smooth metrics, it is also possible to study moduli spaces of singular metrics. Indeed, various synthetic definitions of curvature bounds have been introduced, such as Alexandrov spaces and $\RCD$-spaces, generalizing respectively lower bounds on the sectional and Ricci curvature. Lately, moduli spaces of metrics with nonnegative curvature in the Alexandrov sense have been studied in \cite{Belegradek_18}.\\
Here, we will focus on $\RCD(0,2)$-spaces, which can be seen as possibly singular metric measure spaces having dimension at most $2$ and nonnegative Ricci curvature, in a synthetic sense. This paper follows \cite{Mondino-Navarro_22}, which sets the foundations about moduli spaces of $\RCD(0,N)$-structures on compact topological spaces. We will focus on the following two questions.

\begin{question}\label{question: (i)}Up to homeomorphism, which compact topological spaces admit an $\RCD(0,2)$-structure?
\end{question}

\begin{question}\label{question: (ii)}Let $X$ be a compact topological space that admits an $\RCD(0,2)$-structure. What can be said about the topology of the moduli space of $\RCD(0,2)$-structures on $X$?
\end{question}

\subsection{Basic definitions}

Before presenting the main results, let us recall some basic definitions.

\begin{Definition}Let $(X,\dist,\m)$ be a triple where $(X,\dist)$ is a metric space and $\m$ is a measure on $X$. We say that $(X,\dist,\m)$ is a metric measure space (m.m.s. for short) when $(X,\dist)$ is a complete separable metric space and $\m$ is a boundedly finite Radon measure on $(X,\dist)$.
\end{Definition}

$\CD$-spaces (which generalize lower bounds on the Ricci curvature) were introduced independently by Lott and Villani in \cite{Lott-Villani_09}, and by Sturm in \cite{Sturm_I_06} and \cite{Sturm_II_06}. For simplicity, we only define $\CD(0,N)$-spaces (following Definition 1.3 in \cite{Sturm_II_06}). An extensive study of $\CD$-spaces is given in chapters 29 and 30 of \cite{Villani_09}.

\begin{Definition}\label{CD(0,N)}Given $N\in[1,\infty)$, a \emph{$\CD(0,N)$-space} is a m.m.s. $(X,\dist,\m)$ such that $\m(X)>0$, $(X,\dist)$ is locally compact and geodesic, and the Renyi entropy with parameter $N$ associated to $\m$ is weakly convex on the $L^2$-Wasserstein space $(\mathcal{P}_2(X,\dist,\m),\mathcal{W}_2)$ of probability measures that are absolutely continuous w.r.t. $\m$ and with finite variance.
\end{Definition}

Some $\CD$-spaces are not Ricci limit spaces, e.g. non-Riemannian Finsler spaces are $\CD$-spaces. In order to single out the ``Riemannian" $\CD$-structures, Ambrosio, Gigli, and Savar\'{e} strengthened the definition of
$\CD$-spaces by introducing $\RCD$-spaces in \cite{Ambrosio-Gigli-Savar_14} (see also \cite{Ambrosio-Gigli-Mondino-Rajala_15}, \cite{Gigli_15}, \cite{Ambrosio-Mondino-Savar_19}, and \cite{Cavalletti-Milman_21}). To this aim, a metric measure space is said to be infinitesimally Hilbertian if the Sobolev space $W^{1,2}$ is a Hilbert space (note that for a
Finsler manifold, $W^{1,2}$ is a Banach space).

\begin{Definition}Given $N\in[1,\infty)$, a m.m.s. $(X,\dist,\m)$ is an \emph{$\RCD(0,N)$-space} if it is an infinitesimally Hilbertian $\CD(0,N)$-space.
\end{Definition}

We now introduce the moduli space of $\RCD(0,N)$-structures on a topological space.

\begin{Definition}Let $X$ be a compact topological space and let $N\in[1,\infty)$. An \emph{$\RCD(0,N)$-structure on $X$} is an $\RCD(0,N)$-space $(X,\dist,\m)$ such that $\dist$ metrizes the topology of $X$ and $\Spt(\m)=X$. The \emph{moduli space of $\RCD(0,N)$-structures on $X$} is the set $\M_{0,N}(X)$ of $\RCD(0,N)$-structures on $X$ quotiented by measure-preserving isometries. $\M_{0,N}(X)$ is endowed with the mGH topology (see Remark \ref{remark: equivariant GH topology}).
\end{Definition}


\subsection{Main results}

The following notation introduces the topological spaces we are going to focus on.

\begin{Notation}\label{not: topological spaces}Let $\{*\}$, $I$, and $\mathbb{S}^1$ denote the singleton, the closed unit interval, and the unit circle, respectively. We denote $\mathbb{S}^2$, $\R\mathbb{P}^2$, $\mathbb{D}$, $\mathbb{M}^2$, $\mathbb{T}^2$ and $\mathbb{K}^2$ the $2$-sphere, the projective plane, the closed $2$-disc, the M\"obius band, the $2$-torus and the Klein bottle, respectively.
\end{Notation}

The following result answers Question \ref{question: (i)}.

\begin{Theorem}\label{prop: main}If $X$ is a compact topological space, then $X$ admits an $\RCD(0,2)$-structure if and only if it is homeomorphic to one of the following spaces: $\{*\}$, $I$, $\mathbb{S}^1$, $\mathbb{T}^2$, $\mathbb{K}^2$, $\mathbb{M}^2$, $\mathbb{S}^1\times I$, $\mathbb{S}^2$, $\mathbb{R}\mathbb{P}^2$, or $\mathbb{D}$.
\end{Theorem}

We will sketch the proof of Theorem \ref{prop: main} in the next section, and provide a more detailed statement (see table \ref{propintro: topological obstructions}). The subsequent result provides a partial answer to Question \ref{question: (ii)}.

\begin{Theorem}\label{th: contractibility}If $X$ is a compact topological space that admits an $\RCD(0,2)$-structure, then the moduli space $\M_{0,2}(X)$ of $\RCD(0,2)$-structures on $X$ is contractible.
\end{Theorem}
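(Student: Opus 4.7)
The plan is to invoke Proposition \ref{prop: main} to reduce to the ten homeomorphism types listed there, and handle each case by identifying $\M_{0,2}(X)$ mGH-homeomorphically with an explicit, contractible parameter space, then exhibiting a canonical deformation retraction (linear interpolation of density profiles, interpolation of lattice data, or Minkowski interpolation of convex bodies). For the low-dimensional cases $X \in \{\{*\}, I, \mathbb{S}^1\}$, a direct one-dimensional analysis of the $\CD(0,2)$ condition forces the reference density (with respect to Lebesgue, in any geodesic chart) to be concave. This gives $\M_{0,2}(\{*\}) \cong (0,\infty)$ via total mass, $\M_{0,2}(\mathbb{S}^1) \cong (0,\infty) \times (0,\infty)$ (periodicity forces constant density, so only length and mass survive), and $\M_{0,2}(I)$ identified with the quotient of a convex cone of pairs (length, concave density) by the orientation-reversing involution of $I$. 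Each such parameter space is convex, or a linear-involution quotient of one, and linear (resp.\ symmetrized) homotopy to a basepoint realizes the contraction at the mGH level.

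For the zero-Euler-characteristic surfaces $X \in \{\mathbb{T}^2, \mathbb{K}^2, \mathbb{M}^2, \mathbb{S}^1 \times I\}$, the Gauss-Bonnet theorem in its Alexandrov form, combined with nonnegative curvature, forces the Alexandrov curvature measure to vanish; together with the dimension-saturation condition $N = n$ killing nontrivial densities, any $\RCD(0,2)$-structure is a flat Riemannian metric on $X$ equipped with a positive multiple of the area form. For the closed cases $\mathbb{T}^2, \mathbb{K}^2$ the moduli of flat metrics is the classical modular space $\mathbb{H}^2/\Gamma$, homeomorphic via the $j$-invariant (or its Klein-bottle analogue) to a Euclidean space; for $\mathbb{M}^2$ and $\mathbb{S}^1 \times I$ one argues similarly, or equivalently doubles across the boundary to recover a flat $\mathbb{K}^2$ or $\mathbb{T}^2$ equipped with an isometric involution. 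Together with the $(0,\infty)^2$ coming from metric and measure scales, each moduli is contractible, with contraction realized at the mGH level by linear interpolation of the lattice data.

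For the positive-Euler-characteristic surfaces $X \in \{\mathbb{S}^2, \R\mathbb{P}^2, \mathbb{D}\}$, I would invoke the structure theory for top-dimensional $\RCD(0,N)$-spaces to identify $\RCD(0,2)$-structures on $X$ with $2$-dimensional nonnegatively curved Alexandrov metrics together with a positive multiple of the $2$-Hausdorff measure. For $\mathbb{S}^2$, Alexandrov's realization theorem identifies the moduli with boundaries of convex bodies in $\R^3$ modulo rigid motions (times $(0,\infty)$ for the measure scale); for $\R\mathbb{P}^2 = \mathbb{S}^2/\{\pm 1\}$ one restricts to centrally symmetric convex bodies; and for $\mathbb{D}$ one doubles across the boundary (the double of $\mathbb{D}$ is $\mathbb{S}^2$) to pass to $\Z/2\Z$-equivariant convex bodies. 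In each case, Minkowski interpolation $K_t = (1-t) K_0 + t K_*$ with a fixed reference body $K_*$ preserves the distinguished symmetric class of convex bodies, is continuous in the mGH topology, and realizes a deformation retraction of $\M_{0,2}(X)$ onto the point $[K_*]$.

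The subtlest step is the positive-Euler-characteristic case: rigorously identifying $\M_{0,2}(X)$ with an equivariant space of convex bodies (times a mass factor) as an mGH-homeomorphism, and verifying that Minkowski interpolation stays inside the distinguished equivariant class (and hence inside $\M_{0,2}(X)$) throughout the homotopy. This hinges on the structure theory of $2$-dimensional $\RCD$-spaces and on the standard equivalence between mGH convergence of nonnegatively curved Alexandrov surfaces and Hausdorff convergence of the associated convex bodies.
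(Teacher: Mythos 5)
Your proposal follows the same overall blueprint as the paper: reduce via Proposition~\ref{prop: main} to the ten listed homeomorphism types, identify each $\M_{0,2}(X)$ with an explicit parameter space, and realize a contraction by linear or Minkowski interpolation. The treatment of $\{*\}$, $I$, $\mathbb{S}^1$ (concave densities via the $\CD(0,2)$ condition, with periodicity forcing constancy on the circle) and of $\mathbb{S}^2$, $\R\mathbb{P}^2$, $\mathbb{D}$ (Alexandrov realization by convex bodies modulo $\Or_3(\R)$, restriction to centrally or planar-symmetric bodies for the quotients, and contraction via $t\mathbb{B}+(1-t)D$) matches the paper essentially step for step.

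Where you genuinely depart from the paper is in the $\chi=0$ surfaces $\mathbb{T}^2$, $\mathbb{K}^2$, $\mathbb{M}^2$, $\mathbb{S}^1\times I$. You invoke the structure theorem of Lytchak--Stadler to view the space as a nonnegatively curved Alexandrov surface, then apply Gauss--Bonnet (or equivalently Perelman doubling plus Gauss--Bonnet on the closed double, which is $\mathbb{T}^2$ or $\mathbb{K}^2$) to conclude flatness, after which the moduli is parametrized by lattice data. The paper instead never uses Gauss--Bonnet: for $\mathbb{T}^2$ and $\mathbb{K}^2$ it proves a general statement (Proposition~\ref{proposition: the case of flat manifolds}) that any $\RCD(0,N)$-structure on a closed flat $n$-manifold with $N\geq n$ must itself be flat, by showing the soul of a splitting of the universal cover is a point (via a topological obstruction, not curvature identities), and then plugs in the Bettiol--Derdzinski--Piccione description of $\mathscr{M}_{\mathrm{flat}}$; for $\mathbb{M}^2$ and $\mathbb{S}^1\times I$ the paper runs the splitting machinery directly and uses the continuity of the Albanese and soul maps (Theorem~\ref{Th: Albanese and soul continuity}) to get the homeomorphism $\M_{0,2}\simeq\R^3$. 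Your route is more elementary and self-contained for surfaces; the paper's route is more structural and generalizes beyond dimension two, and crucially hands you the continuity of the parametrization for free via Theorem~\ref{Th: Albanese and soul continuity} and the De~Philippis--Gigli stability of Hausdorff measures, rather than leaving the mGH-continuity of the lattice parametrization as a claim to be checked by hand. You correctly flag the $\mathbb{S}^2$/$\R\mathbb{P}^2$/$\mathbb{D}$ block as the technical heart; the paper's Sections~\ref{sec: Approximation Lemmas} and the surrounding propositions supply exactly the quantitative GH-approximation estimates (the ``$3$ to $3$'', ``$3$ to $2$'', ``$2$ to $2$'' lemmas) needed to make your appeal to ``standard equivalence between mGH convergence and Hausdorff convergence of convex bodies'' rigorous across dimension drops, which is the one place where a hand-wave would not suffice.
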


In the next section, we will explain how to prove Theorem \ref{th: contractibility}. In particular, we will describe the moduli space of $\RCD(0,2)$-structures on each of the topological spaces appearing in Notation \ref{not: topological spaces} (see Theorem \ref{th: contractibility 2}).

\begin{Remark}Theorem \ref{th: contractibility} should be compared with Theorem C in \cite{Mondino-Navarro_22}. Indeed, the result just mentioned shows in particular that the moduli spaces $\M_{0,N+4}(\mathbb{S}^N\times \mathbb{T}^4)$ have non-trivial higher rational cohomology groups; in particular, they are not contractible.
\end{Remark}

\subsection{Sketch of the proofs}

Let us briefly explain the idea behind the proof of Theorem \ref{prop: main}. Assume that $X$ is a compact topological space that admits an $\RCD(0,2)$-structure $(X,\dist,\m)$. We denote $p\colon \tilde{X}\to X$ the universal cover of $X$ (whose existence is granted by Theorem 1.1 in \cite{Mondino-Wei_19}) and we denote $(\tilde{X},\tilde{\dist},\tilde{\m})$ the lift of $(X,\dist,\m)$ to $\tilde{X}$ (we recall this notion in Section \ref{sec2}). Thanks to the structure theory of $\RCD$-spaces (which we recall briefly in section \ref{sec4}), it is possible to associate a dimension to $(X,\dist,\m)$, denoted $\dim(X,\dist,\m)$. Here, the dimension is bounded from above by $2$; therefore, we necessarily have $\dim(X,\dist,\m)\in\{0,1,2\}$.\\
We can take care of the case $\dim(X,\dist,\m)=0$ by relying on the structure theory of $\RCD$-spaces.\\
We can then treat the dimension $1$ case using the results of \cite{Kitabeppu-Lakzian_16} (which classify low-dimensional $\RCD$-spaces).\\
Finally, to treat the case $\dim(X,\dist,\m)=2$, we fix a splitting:
\begin{equation}\label{eq: intro split}
\phi\colon(\tilde{X},\tilde{\dist},\tilde{\m})\to(\overline{X},\overline{\dist},\overline{\m})\times\R^k,
\end{equation}
associated to the lift of $(X,\dist,\m)$ (we recall this notion in Section \ref{sec3}). The integer $k$ in \eqref{eq: intro split} is equal to the splitting degree $k(X)$ (see \eqref{eq: splitting degree}). In particular, we have $\dim(\overline{X},\overline{\dist},\overline{\m})=2-k(X)$. We can easily take care of the case $k(X)\in\{1,2\}$ using the first part of the proof. The final case $k(X)=0$ implies that $(\overline{X},\overline{\dist},\overline{\m})$ is a simply connected compact topological surface with boundary (using the results of \cite{Lytchak-Stadler_22}). It will then be easy to conclude the proof.\\
Thanks to what we previously mentioned, we will be able to prove that if $(X,\dist,\m)$ is an RCD(0, 2)-structure on a compact topological space $X$, then we have the following case disjunction:
\begin{table}[htb]
  \centering
  \begin{adjustbox}{max width=\textwidth}
    \begin{tabular}{|c||c|c|c|c|c|c|} \hline
$\dim(X,\dist,\m)$ & $0$ & $1$ & $1$ & $2$ & $2$ & $2$ \\ \hline
$k(X)$ & $0$ & $0$ & $1$ & $0$ & $1$ & $2$\\ \hline
$X\mbox{ is homeomorphic to}$ & $\{*\}$ & $I$ & $\mathbb{S}^1$ & $\mathbb{S}^2\mbox{, }\R\mathbb{P}^2\mbox{ or }\mathbb{D}$ & $I\times\mathbb{S}^1\mbox{ or }\mathbb{M}^2$ & $\mathbb{T}^2\mbox{ or }\mathbb{K}^2$\\ \hline
    \end{tabular}
  \end{adjustbox}
    \caption{}
  \label{propintro: topological obstructions}
\end{table}
\FloatBarrier

To prove Theorem \ref{th: contractibility}, we are going to study the moduli spaces $\M_{0,2}(X)$, where $X$ is any of the topological spaces appearing in the third row of table \ref{propintro: topological obstructions}.\\

\paragraph{\textbf{The singleton} $\{*\}$}

The moduli space $\M_{0,N}(\{*\})$ ($N\in[1,\infty]$) is obviously homeomorphic to $\R$, where the scale parameter $\R$ corresponds to the total measure $\m(\{*\})$.\\

\paragraph{\textbf{The unit interval} $I$}

Using results of \cite{Cavalletti-Milman_21}, we show in Proposition \ref{prop9.3} that $\M_{0,2}(I)$ is homeomorphic to $\R\times\{\mathcal{C}^*/\{\pm1\}\}$ (where $\mathcal{C}^*/\{\pm1\}$ is a quotient of the space of concave functions on $I$, it will be introduced in Notation \ref{not: concave}). Here, the $\R$ factor parametrizes the length of the interval, while the factor $\mathcal{C}^*/{\pm1}$ parametrizes the space of admissible measures.\\

\paragraph{\textbf{The circle} $\mathbb{S}^1$, \textbf{the} $2$-\textbf{torus} $\mathbb{T}^2$, \textbf{and the Klein bottle} $\mathbb{K}^2$}

Applying results from \cite{Bettiol-Derdzinski-Piccione_18}, \cite{Honda_20}, and \cite{DePhilippis-Gigli_18}, we provide a description of moduli spaces of $\RCD(0,N)$-structures on any closed flat manifold (see Proposition \ref{proposition: the case of flat manifolds}). Using the result just mentioned, we show that $\M_{0,2}(\mathbb{S}^1)$, $\M_{0,2}(\mathbb{T}^2)$, and $\M_{0,2}(\mathbb{K}^2)$ are homeomorphic to $\R^2$, $\R^4$, and $\R^3$, respectively (see propositions \ref{prop: circle}, \ref{prop: torus}, and \ref{prop: klein}).\\

\paragraph{\textbf{The M\"{o}bius band} $\mathbb{M}^2$ \textbf{and the cylinder} $\mathbb{S}^1\times I$}

It is possible to treat both spaces in a similar way. Focusing on the cylinder, we can show that any $\RCD(0,2)$-structure $(I\times\mathbb{S}^1,\dist,\m)$ on $I\times\mathbb{S}^1$ is isomorphic to $\mathcal{S}(I\times\mathbb{S}^1,\dist,\m)\times(\mathcal{A}(I\times\mathbb{S}^1,\dist,\m),\mathcal{H}^1)$, where $\mathcal{A}$ and $\mathcal{S}$ are the Albanese and soul maps (which reflect how
structures on the universal cover split, their definitions are recalled in Section \ref{sec3}). In this case, $\mathcal{S}(I\times\mathbb{S}^1,\dist,\m)$ is an $\RCD(0,1)$-structure on $I$ and $\mathcal{A}(I\times\mathbb{S}^1,\dist,\m)$ is a flat metric on $\mathbb{S}^1$. Therefore, using the continuity of the Albanese and soul maps (a result that we recall in Section \ref{sec3}), and using propositions \ref{prop: circle} and \ref{prop9.3} (which classify $\RCD(0,N)$-structures on $\mathbb{S}^1$ and $I$, respectively), we can conclude that $\M_{0,2}(I\times\mathbb{S}^1)$ is homeomorphic to $\R^3$ (see Proposition \ref{prop: cylinder}). Applying the same ideas, we can show that $\M_{0,2}(\mathbb{M}^2)$ is homeomorphic to $\R^3$ (see Proposition \ref{prop: mobius})\\

Now, observe that the remaining cases are $\mathbb{S}^2$, $\R\mathbb{P}^2$, and $\mathbb{D}$; in particular, they are all compact topological surfaces with boundary (possibly empty). Moreover, given a compact topological surface $X$, we can apply results from \cite{Honda_20}, \cite{DePhilippis-Gigli_18}, and \cite{Lytchak-Stadler_22} to show that $\M_{0,2}(X)$ is homeomorphic to $\R_{>0}\times\mathscr{M}_{\mathrm{curv}\geq0}(X)$, where $\mathscr{M}_{\mathrm{curv}\geq0}(X)$ is the moduli space of metrics on $X$ that are nonnegatively curved in the Alexandrov sense (see Lemma \ref{lem: case of surfaces}). In particular, to conclude, we only need to describe $\mathscr{M}_{\mathrm{curv}\geq0}(\mathbb{S}^2)$, $\mathscr{M}_{\mathrm{curv}\geq0}(\R\mathbb{P}^2)$, and $\mathscr{M}_{\mathrm{curv}\geq0}(\mathbb{D})$.
\\

\paragraph{\textbf{The} $2$-\textbf{sphere} $\mathbb{S}^2$}

It has been shown by Alexandrov that if $\dist$ is a nonnegatively curved metric on $\mathbb{S}^2$, then $(\mathbb{S}^2,\dist)$ is either isometric to the boundary $\partial B$ of a $3$-dimensional convex body $B$, or to the double $\mathcal{D}K$ of a $2$-dimensional convex body $K$ (see \cite{Kutateladze_05}). In particular, thanks to the result just mentioned, there is a surjective map $\Psi_{\mathbb{S}^2}\colon\mathcal{K}^s_{2\leq3}/\Or_3(\R)\to\mathscr{M}_{\mathrm{curv}\geq0}(\mathbb{S}^2)$, where $\mathcal{K}^s_{2\leq3}$ is the set of compact convex subsets of $\R^3$ with dimension $2$ or $3$ and Steiner point at the origin (see Notation \ref{not: Spaces of convex compacta}). In \cite{Belegradek_18}, Belegradek shows that $\Psi_{\mathbb{S}^2}$ is a homeomorphism. Therefore, the moduli space $\M_{0,2}(\mathbb{S}^2)$ is homeomorphic to $\R\times\{\mathcal{K}^s_{2\leq3}/\Or_3(\R)\}$ (see Proposition \ref{prop: contractibility S2}).\\

\paragraph{\textbf{The projective plane} $\R\mathbb{P}^2$}

Since $\mathbb{S}^2$ is the universal cover of $\R\mathbb{P}^2$, there is a homeomorphism between $\mathscr{M}_{\mathrm{curv}\geq0}(\R\mathbb{P}^2)$ and the moduli space $\mathscr{M}_{\mathrm{curv}\geq0}^{\mathrm{eq}}(\mathbb{S}^2)$ of nonnegatively curved metrics on $\mathbb{S}^2$ that are equivariant w.r.t. the antipodal map (we recall this result in Section \ref{sec2}). Now, let us denote $\tK$ the subspace of $\mathcal{K}^s_{2\leq3}$ whose elements are symmetric w.r.t. the origin (see Notation \ref{not: Spaces of convex compacta}). Observe that given $K,B\in\tK$ with respective dimensions $2$ and $3$, then $\mathcal{D}K$ and $\partial B$ can be seen as equivariant nonnegatively curved metrics on $\mathbb{S}^2$. Thanks to the fact just mentioned, we construct a well defined map $\Psi_{\mathbb{S}^2}^{\mathrm{eq}}\colon\tK/\Or_3(\R)\to\mathscr{M}_{\mathrm{curv}\geq0}^{\mathrm{eq}}(\mathbb{S}^2)$ (see \eqref{eq: homeo S2 eq}). We show in Proposition \ref{prop: realisation RP2} that $\Psi_{\mathbb{S}^2}^{\mathrm{eq}}$ is a $1$-$1$ correspondence; hence proving a realisation result for nonnegatively curved metrics on $\R\mathbb{P}^2$. Finally, in Proposition \ref{prop: RP2 continuity}, we show that $\Psi_{\mathbb{S}^2}^{\mathrm{eq}}$ is a homeomorphism. The proof of the proposition just mentioned is one of the most technical; let us sketch the proof of the direct part. To prove that $\Psi_{\mathbb{S}^2}^{\mathrm{eq}}$ is continuous, we fix a sequence $D_n\to D_{\infty}$ in $\tK$, and need to consider the following three cases:
\begin{itemize}
\item[($3$ to $3$)]$\dim(D_n)=3$ for every $n\in\N\cup\{\infty\}$,
\item[($3$ to $2$)]$\dim(D_n)=3$ for every $n\in\N$, and $\dim(D_{\infty})=2$,
\item[($2$ to $2$)]$\dim(D_n)=2$ for every $n\in\N\cup\{\infty\}$;
\end{itemize}
the goal being to show that $\Psi_{\mathbb{S}^2}^{\mathrm{eq}}([D_n])\to\Psi_{\mathbb{S}^2}^{\mathrm{eq}}([D_{\infty}])$ in the equivariant mGH topology (see Proposition \ref{prop: equivariant mGH}). We prove the convergence thanks to the approximation lemmas \ref{lem: 3 to 3}, \ref{lem: 3 to 2}, and \ref{lem: 2 to 2}; these results provide explicit approximations between spaces with various dimensions and give upper bounds on their distortions. As a result, we obtain that $\M_{0,2}(\R\mathbb{P}^2)$ is homeomorphic to $\R\times\{\tK/\Or_3(\R)\}$.\\

\paragraph{\textbf{The closed disc} $\mathbb{D}$}

This last case is similar to the previous one, but slightly more subtle. First of all, given $\alpha\in\mathbb{S}^2$, we denote $\mathcal{K}^{\alpha}_{2\leq3}$ the subset of $\mathcal{K}^s_{2\leq3}$ whose elements are symmetric w.r.t. $\{\alpha\}^{\perp}$. We then denote $\mathscr{K}_{2\leq3}\coloneqq\cup_{\alpha\in\mathbb{S}^2}\mathcal{K}^{\alpha}_{2\leq3}\times\{\alpha\}\subset \mathcal{K}^s_{2\leq3}\times\mathbb{S}^2$ (see Notation \ref{not: moduli space of the closed disc} for more details). Now, assume that $(D,\alpha)\in\mathscr{K}_{2\leq3}$, and note that only the following case can happen:
\begin{itemize}
\item[(i)]$\dim(D)=3$,
\item[(ii)]$\dim(D)=2$ and $\alpha\perp \Span(D)$,
\item[(iii)]$\dim(D)=2$ and $\alpha\in\Span(D)$.
\end{itemize}
In case (i), we denote $\Phi_{\mathbb{D}}(D,\alpha)\coloneqq\partial D\cap H_{\alpha}^+$ (where $H_{\alpha}^+$ is the upper half-space associated with $\alpha$). In case (ii), we write $\Phi_{\mathbb{D}}(D,\alpha)\coloneqq D$. In case (iii), we define $\Phi_{\mathbb{D}}(D,\alpha)\coloneqq\cup_{{\mathbf{i}}={\mathbf{1}}}^{\mathbf{2}}(D\cap H_{\alpha}^{+})^{\mathbf{i}}\subset\mathcal{D}D$ (see Notation \ref{not: DKalpha}). In every case, $\Phi_{\mathbb{D}}(D,\alpha)$ can be seen as a nonnegatively curved metric on $\mathbb{D}$. Therefore, we introduce in \eqref{eq: homeo disc} a well defined map $\Psi_{\mathbb{D}}\colon\mathscr{K}_{2\leq3}/\Or_3(\R)\to\mathscr{M}_{\mathrm{curv\geq0}}(\mathbb{D})$. In Proposition \ref{prop: realisation disc}, we show that $\Psi_{\mathbb{D}}$ is a $1$-$1$ correspondence; hence proving a realisation result for nonnegatively curved metrics on the disc. Finally, using the approximation lemmas of section \ref{sec: Approximation Lemmas}, we show in Proposition \ref{prop: disc continuity} that $\Psi_{\mathbb{D}}$ is a homeomorphism. Hence, we can conclude that $\M_{0,2}(\mathbb{D})$ is homeomorphic to $\R\times\{\mathscr{K}_{2\leq3}/\Or_3(\R)\}$.\\

To conclude, the discussion above leads to the following theorem.

\begin{Theorem}\label{th: contractibility 2}The following table describes moduli spaces of $\RCD(0,2)$-structures on compact topological spaces:
\begin{table}[htb]
  \centering
  \begin{adjustbox}{max width=\textwidth}
    \begin{tabular}{|c|c|} \hline
$X\mbox{ is homeomorphic to:}$ & $\M_{0,2}(X)\mbox{ is homeomorphic to:}$\\ \hline\hline
$\{*\}$ & $\R$ \\ \hline
$I$ & $\R\times\{\mathcal{C}^*/\{\pm1\}\}\mbox{ (see Notation \ref{not: concave})}$ \\ \hline
$\mathbb{S}^1$& $\R^2$\\ \hline
$\mathbb{T}^2$ & $\R^4$
\\ \hline
$\mathbb{S}^1\times I,\ \mathbb{M}^2,\mbox{ or }\mathbb{K}^2$ & $\R^3$\\ \hline
$\mathbb{S}^2$ & $\R\times\{\mathcal{K}^s_{2\leq3}/\Or_3(\R)\}\mbox{ (see Notation \ref{not: Spaces of convex compacta})}$ \\ \hline
$\R\mathbb{P}^2$ & $\R\times\{\tK/\Or_3(\R)\}\mbox{ (see Notation \ref{not: Spaces of convex compacta})}$\\ \hline
$\mathbb{D}$ & $\R\times \{\mathscr{K}_{2\leq3}/\Or_3(\R)\}\mbox{ (see Notation \ref{not: moduli space of the closed disc})}$\\ \hline
    \end{tabular}
  \end{adjustbox}
    \caption{}
  \label{propintrobis: topological obstructions}
\end{table}
\FloatBarrier
\end{Theorem}

Thanks to Theorem \ref{th: contractibility 2}, and proceeding via a case by case study (see propositions \ref{prop: circle}, \ref{prop9.3}, \ref{prop: torus}, \ref{prop: klein}, \ref{prop: mobius}, \ref{prop: cylinder}, \ref{prop: contractibility S2}, \ref{prop: projective plane}, and \ref{prop: closed disc}), we obtain Theorem \ref{th: contractibility}.

\subsection*{Organisation of the paper} In Section \ref{sec2}, we recall the equivariant mGH topology and relate $\RCD(0,N)$-structures on a compact topological space to equivariant structures on its universal cover. In Section \ref{sec3}, we recall the notion of splitting and show how to use it to construct the Albanese and soul maps, which will be fundamental to compute $\M_{0,2}(\mathbb{S}^1\times I)$ and $\M_{0,2}(\mathbb{M}^2)$. In Section \ref{sec4}, we prove Theorem \ref{prop: main}. The rest of the paper is devoted to the proof of Theorem \ref{th: contractibility}, which will be done throughout a case by case study following Theorem \ref{prop: main}. First of all, we introduce some lemmas in Section \ref{sec5} to simplify the computations in the case of flat manifolds and surfaces. In Section \ref{sec6}, we treat the $1$-dimensional case, i.e. we compute the moduli spaces of $\RCD(0,2)$-structures on $I$ and $\mathbb{S}^1$. Finally, in Section \ref{sec7}, we treat the rest of the cases. The most technical cases are those of $\mathbb{S}^2$, $\R\mathbb{P}^2$ and $\mathbb{D}$; these will be treated in Section \ref{sec: N=2 k(X)=0}.

\subsection*{Acknowledgements}

The author is supported by the European Research Council (ERC), under the European's Union Horizon 2020 research and innovation programme, via the ERC Starting Grant ``CURVATURE'', grant agreement No. 802689.\\
The author would like to thank Andrea Mondino for his constant support and for his comments on the paper. He is also grateful to G\'{e}rard Besson for exciting discussions on the subject. The author is indebted to the reviewer for their thorough reading and pertinent comments.

\section{Preliminaries}\label{Preliminaries}

Throughout this part $N\in[1,\infty)$ is a fixed real number and $X$ is a compact topological space that admits an $\RCD(0,N)$-structure. Let $p\colon\tilde{X}\to X$ be the universal cover of $X$ (whose existence is granted by Theorem 1.1 in \cite{Mondino-Wei_19}) and denote:
\begin{equation}\label{eq: revised fundamental group}
\overline{\pi}_1(X)\coloneqq\Deck(p)
\end{equation}
its group of deck transformations, also called the \emph{revised fundamental group of $X$}. Thanks to Corollary 2.2 in \cite{Mondino-Navarro_22}, $\overline{\pi}_1(X)$ is a finitely generated group with polynomial growth. We denote:
\begin{equation}\label{eq: splitting degree}
k(X)\coloneqq\text{polynomial growth order of }\overline{\pi}_1(X)\in\N\cap[0,N].
\end{equation}
We call $k(X)$ the \emph{splitting degree of $X$}.

\subsection{Equivariance}\label{sec2}

Later on, it will be convenient to see structures on $X$ as \emph{equivariant} structures on its universal cover. This section introduces moduli spaces of equivariant structures and the equivariant mGH topology.

\begin{Definition}[Equivariant $\RCD(0,N)$-structures]\label{def:equiv}An $\RCD(0,N)$-structure $(\tilde{X},\tilde{\dist},\tilde{\m})$ on $\tilde{X}$ is called equivariant if $\overline{\pi}_1(X)$ acts by isomorphisms on $(\tilde{X},\tilde{\dist},\tilde{\m})$.
\end{Definition}

\begin{Definition}[Equivariant isomorphism]Let $\tilde{\X}_i=(\tilde{X},\tilde{\dist}_i,\tilde{\m}_i)$ ($i\in\{1,2\}$) be an equivariant $\RCD(0,N)$-structure on $\tilde{X}$. We say that $\tilde{\X}_1$ and $\tilde{\X}_2$ are equivariantly isomorphic when there is an isomorphism $\phi$ of $\overline{\pi}_1(X)$ and an isomorphism $f\colon\tilde{\X}_1\to\tilde{\X}_2$ of m.m.s. such that $f(\gamma x)=\phi(\gamma)f(x)$, for every $\gamma\in\overline{\pi}_1(X)$, and every $x\in\tilde{X}$.
\end{Definition}

\begin{Definition}The \emph{moduli space of equivariant $\RCD(0,N)$-structures on $\tilde{X}$} is the set $\M_{0,N}^{\mathrm{eq}}(\tilde{X})$ of equivariant $\RCD(0,N)$-structures on $\tilde{X}$ quotiented by equivariant isomorphisms.
\end{Definition}

Before equipping $\M_{0,N}^{\mathrm{eq}}(\tilde{X})$ with a topology, let us first recall the distortion of a map.

\begin{Notation}\label{not: distortion}Assume that $f\colon(X_1,\dist_1)\to(X_2,\dist_2)$ is a map (not necessarily continuous) between metric spaces. The \emph{distortion of $f$} is defined as:
\begin{equation*}
\Dis(f)\coloneqq\sup\{\lvert \dist_2(f(x),f(y))-\dist_1(x,y)\rvert,x,y\in X_1\}.
\end{equation*}
It provides a measure of how close $f$ is to being an isometry.
\end{Notation}

We introduce the equivariant mGH pseudo-distance $\mathfrak{D}^{\mathrm{eq}}$ to compare equivariant $\RCD(0,N)$-structures on $X$.

\begin{Definition}\label{definition: equivariant mGH pseudo-distance}Let $\tilde{\X}_i=(\tilde{X},\tilde{\dist}_i,\tilde{\m}_i)$ ($i\in\{1,2\}$) be an equivariant $\RCD(0,N)$-structure on $\tilde{X}$ and let $\epsilon>0$. An \emph{equivariant mGH $\epsilon$-approximation} between $\tilde{\X}_1$ and $\tilde{\X}_2$ is a triple $(f,g,\phi)$ where $f\colon\tilde{X}\to\tilde{X}$ and $g\colon\tilde{X}\to\tilde{X}$ are Borel maps and $\phi$ is an isomorphism of $\overline{\pi}_1(X)$ such that:
\begin{itemize}
\item[(i)]$\max\{\Dis(f),\Dis(g)\}\leq\epsilon$ (see Notation \ref{not: distortion}),
\item[(ii)]for every $x\in\tilde{X}$, $\tilde{\dist}_1(g\circ f(x),x)\leq \epsilon$ and $\tilde{\dist}_2(f\circ g (x),x)\leq \epsilon$,
\item[(iii)]for every $\gamma\in\overline{\pi}_1(X)$ and $x\in\tilde{X}$, $f(\gamma x)=\phi(\gamma)f(x)$ and $g(\gamma x)=\phi^{-1}(\gamma)g(x)$,
\item[(iv)]$\max\{\dist_{\mathcal{P}}(f_*\tilde{\m}_1,\tilde{\m}_2),\dist_{\mathcal{P}}(g_*\tilde{\m}_2,\tilde{\m}_1)\}\leq\epsilon$,
\end{itemize}
where $\dist_{\mathcal{P}}$ denotes the Prokhorov distance. We define $\D^{\mathrm{eq}}(\tilde{\X}_1,\tilde{\X}_2)$ the \emph{equivariant mGH pseudo-distance between $\tilde{\X}_1$ and $\tilde{\X}_2$} as the minimum between $1/24$ and the infimum of all $\epsilon>0$ such that there exists an equivariant mGH $\epsilon$-approximation between $\tilde{\X}_1$ and $\tilde{\X}_2$.
\end{Definition}

The equivariant mGH pseudo-distance satisfies all the axioms of a distance apart from the triangle inequality. Indeed, given equivariant $\RCD(0,N)$-structures $\tilde{\X}_i=(\tilde{X},\tilde{\dist}_i,\tilde{\m}_i)$ ($i\in\{1,2,3\}$), we only have the following inequality:
\begin{equation}\label{eq: modified triangle inequality}
\D^{\mathrm{eq}}(\tilde{\X}_1,\tilde{\X}_3)\leq4(\D^{\mathrm{eq}}(\tilde{\X}_1,\tilde{\X}_2)+\D^{\mathrm{eq}}(\tilde{\X}_2,\tilde{\X}_3)),
\end{equation}
a proof of which is given in the Appendix of \cite{Mondino-Navarro_22}. Even though the equivariant mGH pseudo-distance $\D^{\mathrm{eq}}$ is a priori not a distance, it induces a metrizable topology as shown by the following proposition (see Proposition 2.8 of \cite{Mondino-Navarro_22} for a proof in the pointed case).

\begin{Proposition}\label{prop: equivariant mGH}The equivariant mGH pseudo-distance $\D^{\mathrm{eq}}$ induces a metrizable topology on $\M_{0,N}^{\mathrm{eq}}(\tilde{X})$, which we call the \emph{equivariant mGH topology}.
\end{Proposition}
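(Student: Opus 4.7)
I would follow the argument of Proposition 2.8 in \cite{Mondino-Navarro_22}, adapted to the compact equivariant (non-pointed) setting. The proof splits naturally into three stages. \emph{Stage (a), separation.} I would first show that $\D^{\mathrm{eq}}(\tilde{\X}_1,\tilde{\X}_2) = 0$ implies that $\tilde{\X}_1$ and $\tilde{\X}_2$ are equivariantly isomorphic. Given equivariant mGH $\epsilon_n$-approximations $(f_n,g_n,\phi_n)$ with $\epsilon_n \downarrow 0$, the key observation is that since $\overline{\pi}_1(X)$ is finitely generated (Corollary 2.2 in \cite{Mondino-Navarro_22}), a pigeonhole extraction on the action of $\phi_n$ on a fixed finite generating set yields a subsequence along which the $\phi_n$ stabilize to a common group automorphism $\phi$. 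After realigning basepoints via deck transformations so that the $f_n$-image of a fixed basepoint remains in a compact fundamental region, a standard Gromov-type diagonal extraction using separability of $\tilde{X}$ produces a pointwise limit map $f\colon \tilde{X}\to\tilde{X}$ that is an isometry by (i) in Definition \ref{definition: equivariant mGH distance}, surjective with $f_*\tilde{\m}_1 = \tilde{\m}_2$ by (ii) and (iv), and equivariant with respect to $\phi$ by (iii). The converse is immediate from the definition.

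\emph{Stage (b), topology from the quasi-distance.} I would declare a subset $U \subset \M_{0,N}^{\mathrm{eq}}(\tilde{X})$ open when every $[\tilde{\X}] \in U$ admits some $\epsilon>0$ with $B_\epsilon([\tilde{\X}]) := \{[\tilde{\Y}] : \D^{\mathrm{eq}}(\tilde{\X},\tilde{\Y}) < \epsilon\} \subset U$. This collection is always a topology, as closure under finite intersections and arbitrary unions follows directly from the definition, independently of whether $\D^{\mathrm{eq}}$ satisfies an actual triangle inequality. Hausdorff separation then follows from (a) together with \eqref{eq: modified triangle inequality}: if $r = \D^{\mathrm{eq}}(\tilde{\X},\tilde{\Y}) > 0$ then $B_{r/16}([\tilde{\X}])$ and $B_{r/16}([\tilde{\Y}])$ are disjoint.

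\emph{Stage (c), metrization.} To produce an equivalent genuine metric I would apply the standard Frink-type chain construction, setting
\[
\rho([\tilde{\X}],[\tilde{\Y}]) := \inf \sum_{i=0}^{n-1} \D^{\mathrm{eq}}(\tilde{\X}_i,\tilde{\X}_{i+1})^{\alpha},
\]
where the infimum runs over finite chains from $\tilde{\X}$ to $\tilde{\Y}$ and $\alpha \in (0,1]$ is chosen small enough relative to the constant $4$ appearing in \eqref{eq: modified triangle inequality}. Subadditivity of $\rho$ is automatic, while iterating \eqref{eq: modified triangle inequality} along chains and using the concavity inequality $(a+b)^{\alpha}\leq a^{\alpha}+b^{\alpha}$ yields a two-sided bound $c(\D^{\mathrm{eq}})^{\alpha} \leq \rho \leq (\D^{\mathrm{eq}})^{\alpha}$ for some $c>0$. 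Combined with (a), this shows $\rho$ is a genuine metric on $\M_{0,N}^{\mathrm{eq}}(\tilde{X})$ inducing the topology defined in (b).

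The main obstacle is stage (a): unlike in the pointed setting of \cite{Mondino-Navarro_22}, there is no canonical basepoint, so one must exploit the deck action of $\overline{\pi}_1(X)$ to realign basepoints inside a fixed compact fundamental region before extracting a subsequential limit of the $f_n$, and then check that the limit genuinely intertwines the action with the common $\phi$. Stages (b) and (c) are essentially formal consequences of \eqref{eq: modified triangle inequality}.
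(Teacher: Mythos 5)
The paper offers no proof of this proposition; it points to Proposition 2.8 of \cite{Mondino-Navarro_22}, the pointed analogue, and leaves the adaptation implicit, so there is no paper-internal argument to compare against directly. Your three-stage decomposition (separation, topology, metrization) is the natural one, and stages (b) and (c) are correct formal consequences of the relaxed triangle inequality \eqref{eq: modified triangle inequality}: the balls do define a topology, Hausdorffness follows from separation together with \eqref{eq: modified triangle inequality}, and the Frink/Mac\'ias--Segovia chain construction yields a genuine metric $\rho$ with $c(\D^{\mathrm{eq}})^{\alpha}\le\rho\le(\D^{\mathrm{eq}})^{\alpha}$ for $\alpha$ small enough, which induces the same topology. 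It is also worth observing that the paper asserts, in the sentence immediately preceding the proposition, that $\D^{\mathrm{eq}}$ satisfies all the axioms of a distance except the triangle inequality---so the separation property (your stage (a)) is treated as prior content, and the proposition proper reduces to your (b)+(c).

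Stage (a), as you wrote it, has a genuine gap. Finite generation of $\overline{\pi}_1(X)$ does not by itself justify a pigeonhole extraction of a stable automorphism $\phi$: the group $\mathrm{Aut}(\overline{\pi}_1(X))$ can be infinite (for instance $\GL_2(\Z)$ when $\overline{\pi}_1(X)\simeq\Z^2$, which is exactly the case of $\mathbb{T}^2$). What makes the pigeonhole legitimate is the displacement bound coming from conditions (i) and (iii) of Definition \ref{definition: equivariant mGH distance}: for a fixed generator $s$ and a fixed $x_0\in\tilde{X}$,
\[
\tilde{\dist}_2\bigl(\phi_n(s)f_n(x_0),f_n(x_0)\bigr)=\tilde{\dist}_2\bigl(f_n(sx_0),f_n(x_0)\bigr)\le\tilde{\dist}_1(sx_0,x_0)+\epsilon_n,
\]
a bound uniform in $n$. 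After realigning $f_n$ by a deck transformation so that $f_n(x_0)$ stays in a fixed compact fundamental region (which replaces $\phi_n$ by a conjugate automorphism, an innocuous change), proper discontinuity of the deck action on the proper space $(\tilde{X},\tilde{\dist}_2)$ ensures that only finitely many deck elements have such bounded displacement over that compact region; only then does pigeonhole apply. Note that this realignment must therefore precede the pigeonhole step, not follow it as your outline has it. The rest of stage (a) (diagonal extraction over a countable dense set, upgrading the pointwise limit to a measure-preserving, $\phi$-equivariant isometry) is standard.
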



\begin{Remark}\label{remark: equivariant GH topology}Later, we will sometimes forget about points (iii) and (iv) in Definition \ref{definition: equivariant mGH pseudo-distance}, leading to different notions of convergence:
\begin{itemize}
\item forgetting points (iii) and (iv) leads to the notion of \emph{GH $\epsilon$-approximation} and \emph{GH distance $\GH$},
\item forgetting points (iii) leads to the notion of \emph{mGH $\epsilon$-approximation} and \emph{mGH distance $\dist_{\mathrm{mGH}}$},
\item forgetting point (iv) leads to the notion of \emph{equivariant GH $\epsilon$-approximation} and \emph{equivariant GH distance $\mathcal{D}^{\mathrm{eq}}$}.
\end{itemize}
Moduli spaces of metrics, metric measure structures, and equivariant metrics will be respectively endowed with the topology induced by $\GH$ (GH topology), $\dist_{\mathrm{mGH}}$ (mGH topology) and $\mathcal{D}^{\mathrm{eq}}$ (equivariant GH topology).
\end{Remark}

To conclude this section, let us relate $\M_{0,N}(X)$ to $\M_{0,N}^{\mathrm{eq}}(\tilde{X})$. Assume that $(X,\dist,\m)$ is an $\RCD(0,N)$-structure on $X$. There exists a unique equivariant $\RCD(0,N)$-structure on $\tilde{X}$, called the \emph{lift of $(X,\dist,\m)$}, which we denote $p^*(X,\dist,\m)$, such that:
\begin{equation}\label{eq: lift}
p\colon p^*(X,\dist,\m)\to(X,\dist,\m)
\end{equation}
is a local isomorphism (see Corollary 2.1 of \cite{Mondino-Navarro_22}). Moreover, it is easily seen that isomorphic $\RCD(0,N)$-structures on $X$ have equivariantly isomorphic lifts. Therefore, there is a well defined map:
\begin{equation*}
p^*\colon\M_{0,N}(X)\to\M_{0,N}^{\mathrm{eq}}(\tilde{X})
\end{equation*}
called the \emph{lift map}, such that for every $[X,\dist,\m]\in\M_{0,N}(X)$, we have $p^*([X,\dist,\m])=[p^*(X,\dist,\m)]$. The next result is proved in the pointed case in \cite{Mondino-Navarro_22} (see Corollary A).

\begin{Theorem}\label{th: equiv homeo}If $\tilde{X}$ is compact, then the lift map $p^*\colon\M_{0,N}(X)\to\M_{0,N}^{\mathrm{eq}}(\tilde{X})$ is a homeomorphism.
\end{Theorem}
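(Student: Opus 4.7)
The plan is to leverage the pointed analogue of this result, namely Corollary A of \cite{Mondino-Navarro_22}, which asserts the same statement for pointed moduli spaces. The reduction will exploit two features of our setting: first, that $\tilde{X}$ compact together with $X$ compact forces $\overline{\pi}_1(X)$ to be finite (since $p\colon\tilde{X}\to X$ is a covering map and the fiber has cardinality $|\overline{\pi}_1(X)|$), and second, that on compact metric measure spaces the pointed and unpointed mGH topologies coincide.

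First I would verify bijectivity of $p^*$. Surjectivity follows by a descent construction: given an equivariant $\RCD(0,N)$-structure $(\tilde{X},\tilde{\dist},\tilde{\m})$, the orbit space $\tilde{X}/\overline{\pi}_1(X)$ is canonically homeomorphic to $X$, and it inherits a quotient metric $\dist([x],[y])=\min_{\gamma\in\overline{\pi}_1(X)}\tilde{\dist}(\gamma x,y)$ and a pushforward measure $\m$ (the normalization being $p_*\tilde{\m}=|\overline{\pi}_1(X)|\cdot\m$), with respect to which the covering projection $p$ becomes a local isomorphism. Stability of the $\RCD$ condition under such free, isomorphic, properly discontinuous quotients then yields the required $\RCD(0,N)$-structure on $X$, whose lift is the original one by uniqueness in \eqref{eq: lift}. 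Injectivity is immediate: an equivariant isomorphism between two lifts descends along the canonical identification $\tilde{X}/\overline{\pi}_1(X)\cong X$ to an isomorphism of the base structures.

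Next I would address continuity in both directions. The forgetful maps from pointed to unpointed moduli spaces, on both the $\M_{0,N}(X)$ and $\M_{0,N}^{\mathrm{eq}}(\tilde{X})$ sides, are tautologically continuous; when $X$ and $\tilde{X}$ are compact they are in fact homeomorphisms, because an mGH (respectively equivariant mGH) $\epsilon$-approximation between compact m.m.s.\ automatically yields a pointed one after choosing any basepoint and enlarging the ambient radius past the diameter. Since the pointed lift map is a homeomorphism by Corollary A of \cite{Mondino-Navarro_22}, the commutative diagram relating the pointed and unpointed lift maps via the forgetful functors then forces $p^*$ to be a homeomorphism as well.

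The main obstacle will be the equivariant side of the pointed/unpointed equivalence: an equivariant $\epsilon$-approximation $(f,g,\phi)$ in the sense of Definition \ref{definition: equivariant mGH distance} need not preserve a pre-chosen basepoint, and before applying the pointed result one must adjust $f$ and $g$ by elements of $\overline{\pi}_1(X)$ to re-center them. Because $\overline{\pi}_1(X)$ is finite, any two basepoints in the same fundamental domain are separated by a distance bounded in terms of the diameter of $\tilde{X}$, so this adjustment costs only a controlled additive error that can be absorbed by mildly worsening the approximation parameter. Tracking this bookkeeping while respecting the isomorphism $\phi$ of $\overline{\pi}_1(X)$ is the chief technical step, after which the conclusion is a purely formal consequence of the pointed case.
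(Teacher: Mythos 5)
Your high-level strategy---reducing to the pointed result (Corollary A of \cite{Mondino-Navarro_22}), handling bijectivity by a quotient/descent construction, and then transferring continuity---is exactly in the spirit of what the paper intends; indeed the paper gives no details beyond citing the pointed analogue. The bijectivity argument (descent of an equivariant structure to the orbit space using stability of $\RCD$ under quotients by finite groups of isomorphisms, and injectivity from uniqueness of lifts together with descent of equivariant isomorphisms) is sound.

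However, there are two concrete inaccuracies in the continuity step. First, the claim that \emph{``the forgetful maps from pointed to unpointed moduli spaces \ldots are homeomorphisms when $X$ and $\tilde X$ are compact''} is false: the forgetful map $\M^{\mathrm{p}}_{0,N}(X)\to\M_{0,N}(X)$ (and likewise on the equivariant side) is surjective but \emph{not injective}, since for a structure with a small isometry group two choices of basepoint generally give distinct pointed isomorphism classes. Compactness makes the forgetful map continuous and proper, but not a bijection, so you cannot conclude by formally inverting it in a commutative square. Second, the proposed fix---post-composing the equivariant approximation maps with elements of $\overline{\pi}_1(X)$ and ``absorbing'' an error on the order of $\Diam(\tilde X)$ by ``mildly worsening the approximation parameter''---does not work: for convergence one needs the approximation parameter to tend to $0$, and a deck-translate of $f(\tilde x_1)$ need not lie near a pre-chosen $\tilde x_2$ unless their projections to $X$ are already close, which is not given. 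Deck transformations only move basepoints within a single fibre of $p$, whereas the unpointed distance allows the ``good'' basepoint to be an arbitrary point of $\tilde X$.

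The standard way to make the reduction precise is to let the basepoints track the approximations rather than trying to re-centre a fixed choice. Concretely, if $[\X_n]\to[\X_\infty]$ in the unpointed mGH topology via $\epsilon_n$-approximations $(f_n,g_n)$ with $\epsilon_n\to 0$, fix any $x_\infty\in X_\infty$ and set $x_n\coloneqq g_n(x_\infty)$; then $(\X_n,x_n)\to(\X_\infty,x_\infty)$ in the pointed mGH topology (since $\dist(f_n(x_n),x_\infty)\le\epsilon_n$). Lifting these basepoints arbitrarily (all lifts giving the same pointed equivariant class, since deck transformations are equivariant isomorphisms) and applying the pointed homeomorphism from Corollary A gives pointed equivariant convergence of the lifts, and forgetting basepoints gives the required convergence in $\M^{\mathrm{eq}}_{0,N}(\tilde X)$. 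The argument for the continuity of $(p^*)^{-1}$ is symmetric, with the same basepoint-tracking device applied to the equivariant approximations. This replaces both of the problematic claims in your write-up and makes the formal reduction rigorous.
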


\subsection{Albanese variety and soul}\label{sec3}

A fundamental notion when studying $\RCD(0,N)$-spaces is the notion of splitting, which will be introduced in this section. We will also present the Albanese and soul maps, which will be important when computing moduli spaces in section \ref{section: N=2 k(X)=1}.

\begin{Notation}We denote $\de$ the Euclidean distance (it will always be clear which Euclidean space we discuss in the text).
\end{Notation}

Let $(X,\dist,\m)$ be an $\RCD(0,N)$-structure on $X$ with lift $(\tilde{X},\tilde{\dist},\tilde{\m})$ (see \eqref{eq: lift}). Thanks to Theorem 1.3 in \cite{Mondino-Wei_19} (after Theorem 1.4 of \cite{Gigli_13}), we can fix an isomorphism:
\begin{equation}\label{eq: splitting map}
\phi\colon(\tilde{X},\tilde{\dist},\tilde{\m})\to(\overline{X},\overline{\dist},\overline{\m})\times\R^k,
\end{equation}
where $k\in\N\cap[0,N]$, $\R^k$ is endowed with Euclidean distance $\de$ and Lebesgue measure $\mathcal{L}^k$, and $(\overline{X},\overline{\dist},\overline{\m})$ is a compact $\RCD(0,N-k)$-space with trivial revised fundamental group (see \eqref{eq: revised fundamental group}). Such a map is called a \emph{splitting of $(\tilde{X},\tilde{\dist},\tilde{\m})$}, $k$ is called the \emph{degree of $\phi$}, and $(\overline{X},\overline{\dist},\overline{\m})$ is called the \emph{soul of $\phi$}.

\begin{Remark}\label{rem: splitting degree}Thanks to Corollary 2.2 in \cite{Mondino-Navarro_22}, we have $k=k(X)$ (see \eqref{eq: splitting degree}). In particular, given any $\RCD(0,N)$-structure on $X$, any splitting of its lift would also have degree $k=k(X)$.
\end{Remark}

Since $\overline{X}$ is compact, an application of Lemma 1 in \cite{Shen-Wei_91} implies that the isomorphism group of $(\overline{X},\overline{\dist},\overline{\m})\times\R^k$ splits. Consequently, any isomorphism $T$ of $(\overline{X},\overline{\dist},\overline{\m})\times\R^k$ takes the form $T=(T_{S},T_{\R})$, where $T_{S}\in\Iso(\overline{X},\overline{\dist},\overline{\m})$ and $T_{\R}\in\Iso(\R^k)$. Hence, given $\gamma\in\overline{\pi}_1(X)$, we can introduce the following notations:
\begin{equation}\label{eq: euclidean and soul homomorphisms}
\phi_*(\gamma)\coloneqq \phi\gamma\phi^{-1}=(\{\phi\gamma\phi^{-1}\}_{S},\{\phi\gamma\phi^{-1}\}_{\R})\eqqcolon(\rho_{S}^{\phi}(\gamma),\rho_{\R}^{\phi}(\gamma)),
\end{equation}
where $\rho_{\R}^{\phi}\colon\overline{\pi}_1(X)\to\Iso(\R^k)$ and $\rho_{S}^{\phi}\colon\overline{\pi}_1(X)\to\Iso(\overline{X},\overline{\dist},\overline{\m})$ are called the \emph{Euclidean} and \emph{soul homomorphisms} associated to $\phi$, respectively. Throughout the paper, we will also use the following notation for the image of the Euclidean homomorphism:
\begin{equation}\label{eq: crystallographic group}
\Gamma(\phi)\coloneqq\Image(\rho_{\R}^{\phi})\subset\Iso(\R^k).
\end{equation}

\begin{Remark}\label{rem: crystallographic group}Thanks to Proposition 2.5 in \cite{Mondino-Navarro_22}, $\Gamma(\phi)$ is a crystallographic subgroup of $\Iso(\R^k)$ (i.e. $\Gamma(\phi)$ acts cocompactly and properly discontinuously on $\R^k$). This fact will be useful in order to answer Question \ref{question: (i)}.
\end{Remark}

Observe that one can associate a compact metric space $(\R^k/\Gamma(\phi),\dist_{\Gamma(\phi)})$ to $\phi$, where $\dist_{\Gamma(\phi)}$ is defined by:
\begin{equation}\label{eq: orbifold metric}
\forall x,y\in\R^{k}, \dist_{\Gamma(\phi)}([x],[y])\coloneqq\inf\{\de(x',y'),x'\in[x],y'\in[y]\}.
\end{equation}
Thanks to Lemma 2.1 in \cite{Mondino-Navarro_22}, the isometry class of $(\R^{k}/\Gamma(\phi),\dist_{\Gamma(\phi)})$ and the isomorphism class of $(\overline{X},\overline{\dist},\overline{\m})$ depend only on the isomorphism class of $(X,\dist,\m)$. Hence, we can define the \emph{Albanese variety of $[X,\dist,\m]$}:
\begin{equation}\label{eq: albanese variety}
\mathcal{A}([X,\dist,\m])\coloneqq[\R^k/\Gamma(\phi),\dist_{\Gamma(\phi)}]
\end{equation}
and the \emph{soul of $[X,\dist,\m]$}:
\begin{equation}\label{eq: soul}
\mathcal{S}([X,\dist,\m])\coloneqq[\overline{X},\overline{\dist},\overline{\m}].
\end{equation}

Let us conclude this section by recalling Theorem B in \cite{Mondino-Navarro_22}, which will be important in section \ref{section: N=2 k(X)=1}.

\begin{Theorem}\label{Th: Albanese and soul continuity}If $\mathcal{X}_n\to\mathcal{X}_{\infty}$ in $\M_{0,N}(X)$ in the mGH topology, then $\mathcal{A}(\mathcal{X}_n)\to\mathcal{A}(\mathcal{X}_{\infty})$ in the GH topology and $\mathcal{S}(\mathcal{X}_n)\to\mathcal{S}(\mathcal{X}_{\infty})$ in the mGH topology.
\end{Theorem}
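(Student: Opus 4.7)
The plan is to transfer the convergence to the universal cover via (the pointed version of) Theorem \ref{th: equiv homeo} and then invoke the stability of the splitting theorem for $\RCD(0,N)$ spaces. After fixing compatible basepoints, the mGH convergence $\mathcal{X}_n \to \mathcal{X}_\infty$ lifts to pointed equivariant mGH convergence $p^*\mathcal{X}_n \to p^*\mathcal{X}_\infty$. Working at the level of the universal cover has the advantage that both the soul and the Albanese variety are computed from a single object, namely a splitting of the lift, so we only need to control how splittings behave under the convergence.

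For each $n \in \N \cup \{\infty\}$ fix a splitting $\phi_n \colon (\tilde X, \tilde \dist_n, \tilde \m_n) \to (\overline X_n, \overline \dist_n, \overline \m_n) \times \R^k$, where $k = k(X)$ by Remark \ref{rem: splitting degree}. I would then invoke stability of the Cheeger--Gromoll type splitting under pointed mGH convergence of $\RCD(0,N)$ spaces: the harmonic Busemann functions realising the $\R^k$-factor in $p^*\mathcal{X}_n$ have sufficient regularity to pass to the limit, so up to extracting a subsequence the splittings $\phi_n$ converge to a splitting of $p^*\mathcal{X}_\infty$ with soul $(\overline Y, \overline \dist, \overline \m)$. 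By the uniqueness of the soul up to isomorphism (Lemma 2.1 of \cite{Mondino-Navarro_22}), $(\overline Y, \overline \dist, \overline \m)$ is isomorphic to $\mathcal{S}(\mathcal{X}_\infty)$. Since souls are compact, pointed mGH convergence coincides with mGH convergence, giving $\mathcal{S}(\mathcal{X}_n) \to \mathcal{S}(\mathcal{X}_\infty)$.

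For the Albanese, the point is to exploit the equivariance carried through the $\phi_n$. By construction, $\phi_n$ intertwines the deck action with the product action via $(\rho_S^{\phi_n}, \rho_\R^{\phi_n})$, so the equivariant part of the pointed mGH convergence $p^*\mathcal{X}_n \to p^*\mathcal{X}_\infty$ forces the Euclidean homomorphisms $\rho_\R^{\phi_n} \colon \overline{\pi}_1(X) \to \Iso(\R^k)$ to converge, up to conjugation in $\Iso(\R^k)$, to $\rho_\R^{\phi_\infty}$. Because $\Gamma(\phi_n) = \Image(\rho_\R^{\phi_n})$ is crystallographic (Remark \ref{rem: crystallographic group}) and acts cocompactly on $\R^k$, this pointwise convergence of the representations (equivalently, Chabauty convergence of the groups up to conjugation) descends to GH convergence of the compact flat quotients $(\R^k/\Gamma(\phi_n), \dist_{\Gamma(\phi_n)}) \to (\R^k/\Gamma(\phi_\infty), \dist_{\Gamma(\phi_\infty)})$, that is $\mathcal{A}(\mathcal{X}_n) \to \mathcal{A}(\mathcal{X}_\infty)$.

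The main obstacle will be the first step: establishing stability of the splitting in the equivariant pointed mGH framework, together with the simultaneous passage to the limit of the deck action. Careful bookkeeping is needed because each $\phi_n$ is only well defined up to isometries of $\R^k$ and isomorphisms of the soul, and one must track not only the sequence of souls $\overline X_n$ but also the representations $\rho_\R^{\phi_n}$ — otherwise one recovers convergence of souls but not of Albanese varieties. Once this joint stability is in hand, the remaining arguments are a combination of uniqueness of the splitting and the standard fact that conjugation-convergent sequences of cocompact crystallographic actions yield GH-convergent quotients.
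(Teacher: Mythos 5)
This theorem is not proved in the paper at all: it is quoted as Theorem~B of \cite{Mondino-Navarro_22} and invoked as a black box. There is therefore no internal argument to compare your proposal against; one can only assess whether your sketch would close the gap that the citation leaves.

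As it stands, it does not. The step you yourself flag as ``the main obstacle'' --- stability of the Cheeger--Gromoll-type splitting under equivariant pointed mGH convergence, together with the simultaneous passage to the limit of the deck representations --- is not a standard, citable fact but is essentially the entire content of the theorem you are proving. Reducing the statement to an unproved assertion of comparable depth is a genuine gap, not a bookkeeping issue. To close it you would at minimum need to: (a) fix basepoints and normalize the non-canonical splittings $\phi_n$ coherently along the sequence so that the Busemann-type functions cutting out the $\R^k$-factors actually converge, and argue that the number of split lines does not drop in the limit (here this follows from $k=k(X)$ being topologically determined, a point worth saying explicitly rather than leaving implicit); (b) establish precompactness of the souls $(\overline X_n,\overline{\dist}_n,\overline{\m}_n)$ in mGH and of the quotients $\R^k/\Gamma(\phi_n)$ in GH via uniform diameter and mass bounds extracted from the hypothesis $\mathcal{X}_n\to\mathcal{X}_\infty$, since without precompactness the subsequence extractions you perform have no footing; and (c) justify that the crystallographic representations $\rho_{\R}^{\phi_n}$, which depend on the gauge freedom in the choice of $\phi_n$ (isometries of $\R^k$ and automorphisms of the soul), can be renormalized so that $\rho_{\R}^{\phi_n}(\gamma)\to\rho_{\R}^{\phi_\infty}(\gamma)$ for each $\gamma$. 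Point (c) is where your ``careful bookkeeping'' remark actually bites: equivariant mGH convergence controls the deck action on $\tilde X$ directly, but says nothing a priori about its factored image in $\Iso(\R^k)$ until the $\phi_n$ are pinned down; without that pinning, $\rho_{\R}^{\phi_n}(\gamma)$ need not converge at all, and the final descent to GH convergence of $\R^k/\Gamma(\phi_n)$ never gets started.
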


\subsection{Essential dimension and topological obstructions}\label{sec4}

Given an $\RCD(0,N)$-structure $(X,\dist,\m)$ on $X$, there exists a unique $k\in\N\cap[0,N]$ such that the $k$-dimensional regular set $\mathcal{R}_k$ associated to $(X,\dist,\m)$ has positive $\m$-measure (see Theorem 0.1 in \cite{Bru-Semola_20}, after \cite{Mondino-Naber_19}). This integer $k$ is called the \emph{dimension of $(X,\dist,\m)$}, which we denote:
\begin{equation}\label{eq: dimension}
\dim(X,\dist,\m)\coloneqq k.
\end{equation}
Moreover, thanks to \cite{Kell-Mondino_18} (see also the independent proofs in \cite{DePhilippis-Marchese-Rindler_17} and \cite{Gigli-Pasqualetto_21}), $\m$ is absolutely continuous with respect to the $k$-dimensional Hausdorff measure $\mathcal{H}^k$ of $(X,\dist)$. Finally, if $k=N$, then there exists $a>0$ such that $\m=a\mathcal{H}^N$ (thanks to Corollary 1.3 in \cite{Honda_20}). We summarize this in the following proposition.

\begin{Proposition}\label{prop: semola-bru-honda}If $N\in[1,\infty)$ and $(X,\dist,\m)$ is an $\RCD(0,N)$-structure on a compact topological space $X$, then $\m$ is absolutely continuous with respect to $\mathcal{H}^k$ (where $k=\dim(X,\dist,\m)$). Moreover, if $k=N$, then there exists $a>0$ such that $\m=a\mathcal{H}^N$.
\end{Proposition}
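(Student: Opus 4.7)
The statement is essentially a compilation of three deep results in the structure theory of $\RCD$ spaces, so my plan is not to reprove them but to indicate which references supply which piece, and in what order they must be invoked.

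First, I would justify the definition of $\dim(X,\dist,\m)$. The argument given in the paragraph preceding the proposition is the correct one: by Mondino--Naber's rectifiability result \cite{Mondino-Naber_19}, together with its refinement (\cite{Bru-Semola_20}, Theorem 0.1) asserting the constancy of the dimension of the regular set, there is a unique integer $k \in \N$ with $\m(\mathcal{R}_k) > 0$, and this integer is at most $N$ because the essential dimension of an $\RCD(0,N)$ space cannot exceed $N$. This defines $\dim(X,\dist,\m) = k$, independently of any compactness assumption on $X$.

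Next, for absolute continuity $\m \ll \mathcal{H}^k$, I would cite \cite{Kell-Mondino_18}, which shows that on an $\RCD(K,N)$ space the reference measure is absolutely continuous with respect to the $k$-dimensional Hausdorff measure on the top-dimensional regular set, with $\m$ concentrated there. (Alternatively, one could cite the independent proofs in \cite{DePhilippis-Marchese-Rindler_17} or \cite{Gigli-Pasqualetto_21}.) Again, no additional compactness argument is required since these results are stated for general $\RCD(K,N)$ metric measure spaces.

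Finally, for the rigidity statement when $k = N$, I would invoke \cite{Honda_20}, Corollary 1.3, which states that if the essential dimension of an $\RCD(K,N)$-space equals $N$, then $\m$ is proportional to $\mathcal{H}^N$. Combining these three ingredients yields the proposition. The only ``obstacle'' is purely bibliographic: one must check that each cited result is stated in sufficient generality to apply to arbitrary $\RCD(0,N)$ metric measure spaces (which is the case), so compactness of $X$ plays no active role in the proof and only reflects the ambient setting of the paper.
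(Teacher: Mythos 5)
Your proposal matches the paper's treatment exactly: the proposition is explicitly stated as a summary of the paragraph that precedes it, which cites \cite{Bru-Semola_20} (after \cite{Mondino-Naber_19}) for the well-definedness of $\dim(X,\dist,\m)$, \cite{Kell-Mondino_18} (with the alternatives \cite{DePhilippis-Marchese-Rindler_17}, \cite{Gigli-Pasqualetto_21}) for $\m\ll\mathcal{H}^k$, and Corollary 1.3 of \cite{Honda_20} for the rigidity $\m=a\mathcal{H}^N$ when $k=N$. You have correctly identified all three ingredients and the order in which they are applied, so there is nothing to add.
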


We are now able to prove Theorem \ref{prop: main}.

\begin{proof}[Proof of Theorem \ref{prop: main}]The converse part is straightforward, so we will focus on the direct part. First of all, we assume that $\dim(X,\dist,\m)=0$. Thanks to Theorem 4.1 in \cite{Ambrosio-Honda-Tewodrose_18}, there is a measurable subset $\mathcal{R}_0^*\subset\mathcal{R}_0\subset X$ such that $\m$ is concentrated on $\mathcal{R}_0^*$ and such that $\m$ and $\mathcal{H}^0$ are absolutely continuous with respect to each other on $\mathcal{R}_0^*$. In particular, $\mathcal{R}_0^*\neq\varnothing$. Moreover, picking $x\in\mathcal{R}_0^*$, we have $\mathcal{H}^0(\{x\})=1$, hence $\m(\{x\})\neq0$. Thus, thanks to Corollary 30.9 in \cite{Villani_09}, $\m$ is a Dirac mass. In particular, since $\m$ has full support, $X$ is a singleton.\\
If $\dim(X,\dist,\m)=1$, then $\mathcal{R}_1\neq\varnothing$. Therefore, thanks to Theorem 1.1 in \cite{Kitabeppu-Lakzian_16}, $X$ is homeomorphic to either $\R$, $\R_{\geq0}$, $I$, or $\mathbb{S}^1$. However, since $X$ is compact, it is either homeomorphic to $I$ (if $k(X)=0$) or $\mathbb{S}^1$ (if $k(X)=1$).\\
From now on we assume that $\dim(X,\dist,\m)=2$. Let $(\tilde{X},\tilde{\dist},\tilde{\m})$ be the lift of $(X,\dist,\m)$ and $\phi\colon(\tilde{X},\tilde{\dist},\tilde{\m})\to(\overline{X},\overline{\dist},\overline{\m})\times\R^k$ be a splitting of $(\tilde{X},\tilde{\dist},\tilde{\m})$ (see \eqref{eq: splitting map}). Also, let us recall that $k=k(X)$ (see Remark \ref{rem: splitting degree}) and that $\dim(\tilde{X},\tilde{\dist},\tilde{\m})=2=k+\dim(\overline{X},\overline{\dist},\overline{\m})$ ($p$ being a local isomorphism).\\
If $k(X)=2$, then $\dim(\overline{X},\overline{\dist},\overline{\m})=0$; thus, $\overline{X}$ is a singleton $\{*\}$. In particular, $\rho_{\R}^{\phi}$ coincides with $\phi_*$ (see \eqref{eq: euclidean and soul homomorphisms}). Hence, $\phi$ induces a homeomorphism $X=\tilde{X}/\overline{\pi}_1(X)\simeq \R^2/\Gamma(\phi)$ (where $\Gamma(\phi)$ is defined in \eqref{eq: crystallographic group}). Moreover, $\overline{\pi}_1(X)$ acts freely on $X$; hence, $\Gamma(\phi)$ acts freely on $\R^2$. Therefore, being a crystallographic subgroup of $\Iso(\R^2)$ (see Remark \ref{rem: crystallographic group}), $\Gamma(\phi)$ is a Bieberbach subgroup of $\Iso(\R^2)$, i.e. it is a torsion free crystallographic group (see Proposition 1.1 in \cite{Charlap_86}). However, there are only two Bieberbach subgroups of $\Iso(\R^2)$ (up to isomorphism), leading respectively to $X$ homeomorphic to $\mathbb{T}^2$ or $\mathbb{K}^2$.\\
If $k(X)=1$, then $\dim(\overline{X},\overline{\dist},\overline{\m})=1$. Moreover, $(\overline{X},\overline{\dist},\overline{\m})$ is a compact $\RCD(0,1)$-space. Therefore, thanks to Proposition \ref{prop: semola-bru-honda}, there exists $a>0$ such that $\overline{\m}=a\mathcal{H}^1$. Also, $\overline{X}$ has trivial revised fundamental group. Hence, thanks to Theorem 1.1 in \cite{Kitabeppu-Lakzian_16}, there exists $r>0$ such that $(\overline{X},\overline{\dist})$ is isometric to $([0,r],\de)$. Therefore, we can assume that $(\overline{X},\overline{\dist},\overline{\m})=([0,1],r\de,a\mathcal{H}^1)$. Now, observe that $\Iso([0,1],r\de,a\mathcal{H}^1)\simeq\Z/{2\Z}$, where a generator is given by $s\colon t\to 1-t$. In particular, there are two cases, either $\Image(\rho_{S}^{\phi})=\{\id\}$ or $\Image(\rho_{S}^{\phi})\simeq\Z/2\Z$ (where $\rho_S^{\phi}$ is defined in \eqref{eq: euclidean and soul homomorphisms}).\\
First, let us suppose that $\Image(\rho_{S}^{\phi})=\{\id\}$. Observe that $\phi_*(\gamma)=(\id,\rho_{\R}^{\phi}(\gamma))$, for every $\gamma\in\overline{\pi}_1(X)$. Hence, $X$ is homeomorphic to $[0,1]\times \{\R/\Gamma(\phi)\}$. Moreover, $\Gamma(\phi)$ acts freely on $\R$, i.e. $\Gamma(\phi)$ is a Bieberbach subgroup of $\Iso(\R)$ (see Proposition 1.1 in \cite{Charlap_86}). However, $\Z$ is the only Bieberbach subgroup of $\Iso(\R)$ (up to isomorphism), which leads to $X$ homeomorphic to $[0,1]\times\mathbb{S}^1$.\\
Now, we assume that $\Image(\rho_{S}^{\phi})\simeq\Z/2\Z$. In particular, for every $\gamma\in\overline{\pi}_1(X)$, we have $\rho_S^{\phi}(\gamma)(1/2)=1/2$. Hence, $\Gamma(\phi)$ acts freely on $\R$. In particular, $\Gamma(\phi)$ is a Bieberbach subgroup of $\Iso(\R)$ (see Proposition 1.1 in \cite{Charlap_86}), i.e. it is conjugated to $\Z$ by an affine transformation. We then note that $\rho_{\R}^{\phi}$ is injective. Indeed, let us assume that $\rho_{\R}^{\phi}(\gamma)=0$, and, looking for a contradiction, assume that $\rho_{S}^{\phi}(\gamma)=s$. In that case, we have $\phi_{*}(\gamma)(1/2,t)=(1/2,t)$ for any fixed $t\in\R$, which is not possible as $\overline{\pi}_1(X)$ acts freely on $\tilde{X}$. Therefore, $\rho_{\R}^{\phi}$ is injective and $\overline{\pi}_1(X)$ is isomorphic to $\Z$. In conclusion, there is a unique generator $\gamma$ of $\overline{\pi}_1(X)$ such that $\phi_{*}(\gamma)(\overline{x},t)=(1-\overline{x},t+a)$ for some $a>0$ and every $(\overline{x},t)\in[0,1]\times\R$; therefore, $X$ is homeomorphic to $\mathbb{M}^2$.\\
If $k(X)=0$, then $\overline{\pi}_1(X)$ is finite. Moreover, thanks to Proposition \ref{prop: semola-bru-honda}, there exists $a>0$ such that $\m=a\mathcal{H}^2$. In particular, $(X,\dist,\mathcal{H}^2)$ is an $\RCD(0,2)$-space. As a result of Theorem 1.1 of \cite{Lytchak-Stadler_22}, $X$ is a topological surface with boundary (possibly empty), which implies $\pi_1(X)\simeq\overline{\pi}_1(X)$. Now, thanks to Theorem 5.1 and Theorem 10.1 in \cite{Massey_77} (which provide a classification of surfaces with boundary), $X$ is necessarily homeomorphic to either: a $2$-sphere with $k$ holes, an $m$-fold torus with $k$ holes, or an $m$-fold projective plane with $k$ holes (where $k$ is the number of path connected components of $\partial X$). In any case, $X$ can be represented as a polygon (as described in Section 10 of \cite{Massey_77}) and it is straightforward to compute its fundamental group using Van Kampen's theorem. As a consequence, we can see that the only topological surfaces with boundary having a finite fundamental group are the $2$-sphere, the $2$-sphere with $1$ hole (i.e. the disc), and the projective plane. This concludes the proof.
\end{proof}

%
%
%

\section{Moduli
spaces of closed flat manifolds and surfaces}\label{sec5}

In this section, we present two results (see Proposition \ref{proposition: the case of flat manifolds} and Lemma \ref{lem: case of surfaces}) which will be fundamental to describe the moduli spaces that we are interested in. The two results just mentioned will provide a partial description of the moduli spaces of $\RCD(0,N)$-structures on closed flat manifolds and topological surfaces, respectively.

\subsection{The case of closed flat manifolds}\label{sec: closed flat manifold}

Using \cite{Bettiol-Derdzinski-Piccione_18}, we are going to present a way to compute $\M_{0,N}(X)$ in the case where $X$ is homeomorphic to a closed flat manifold.

\begin{Definition}\label{Notation: crystallographic}Let $n\geq 1$ and let $\Gamma$ be a crystallographic subgroup of $\Iso(\R^n)$. We define:
\begin{itemize}
\item[(i)]$H_{\Gamma}\coloneqq\mathfrak{r}(\Gamma)\subset\Or_n(\R)$ (where $\mathfrak{r}(A,v)\coloneqq A$, for $A\in\GL_n(\R)$ and $v\in\R^n$),
\item[(ii)]$\mathcal{C}_{\Gamma}\coloneqq\{A\in\GL_n(\R),AH_{\Gamma}A^{-1}\subset\Or_n(\R)\}$,
\item[(iii)]$\mathcal{N}_{\Gamma}\coloneqq\mathfrak{r}(\mathrm{N}_{\Aff(\R^n)}(\Gamma))$ (where $\mathrm{N}_{\Aff(\R^n)}(\Gamma)$ is the normaliser of $\Gamma$ in $\Aff(\R^n)$).
\end{itemize}
The moduli space of flat metrics on $\R^n/\Gamma$ is the set $\mathscr{M}_{\mathrm{flat}}(\R^n/\Gamma)$ of flat Riemannian metrics on $\R^n/\Gamma$ quotiented by isometries. $\mathscr{M}_{\mathrm{flat}}(\R^n/\Gamma)$ is equipped with the GH topology (see Remark \ref{remark: equivariant GH topology}).
\end{Definition}

\begin{Proposition}\label{proposition: the case of flat manifolds}Let $n\geq 1$, let $\Gamma$ be a Bieberbach subgroup of $\Iso(\R^n)$, and let $N\in[1,\infty)$. If $N<n$, then there are no $\RCD(0,N)$-structures on $\R^n/\Gamma$. If $N\geq n$, then any $\RCD(0,N)$-structure on $\R^n/\Gamma$ is also an $\RCD(0,n)$-structure. Moreover, there exist homeomorphisms:
$$
\M_{0,n}(\R^n/\Gamma)\simeq\R_{>0}\times\mathscr{M}_{\mathrm{flat}}(\R^n/\Gamma)\simeq\R\times[\Or_n(\R)\backslash\mathcal{C}_{\Gamma}]/\mathcal{N}_{\Gamma},
$$
where the left action of $\Or_n(\R)$ on $\mathcal{C}_{\Gamma}$ is given by multiplication on the left and the right action of $\mathcal{N}_{\Gamma}$ on $[\Or_n(\R)\backslash\mathcal{C}_{\Gamma}]$ is defined by $[A]\cdot B\coloneqq[AB]$, given $[A]\in\Or_n(\R)\backslash\mathcal{C}_{\Gamma}$ and $B\in\mathcal{N}_{\Gamma}$.
\end{Proposition}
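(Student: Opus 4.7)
The plan is to classify every $\RCD(0,N)$-structure on $\R^n/\Gamma$ by lifting to the universal cover, applying the splitting \eqref{eq: splitting map}, and then invoking the parametrization of moduli spaces of flat metrics from \cite{Bettiol-Derdzinski-Piccione_18}.

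First I would observe that, since $\Gamma$ is Bieberbach, $\overline{\pi}_1(\R^n/\Gamma) \cong \Gamma$ is virtually $\Z^n$ and has polynomial growth order exactly $n$, so $k(\R^n/\Gamma) = n$. By Remark \ref{rem: splitting degree}, any lift of an $\RCD(0,N)$-structure on $\R^n/\Gamma$ splits as $(\overline X, \overline \dist, \overline \m) \times \R^n$, where $\overline X$ is a compact $\RCD(0, N-n)$-space with trivial revised fundamental group; this already forces $N \geq n$, settling the nonexistence statement. Assuming $N \geq n$, the fact that $\tilde X$ is homeomorphic to $\R^n$ forces $\overline X$ to be compact, connected, and zero-dimensional, hence a singleton. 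Combining this with Proposition \ref{prop: semola-bru-honda}, the lift is isomorphic to $(\R^n, d_{\mathrm{E}}, c\, \mathcal{L}^n)$ for some Euclidean metric $d_{\mathrm{E}}$ and constant $c>0$, which is manifestly an $\RCD(0,n)$-space; this proves the second claim of the proposition.

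Next I would construct the map $\Psi\colon \M_{0,n}(\R^n/\Gamma) \to \R_{>0} \times \mathscr{M}_{\mathrm{flat}}(\R^n/\Gamma)$ sending $[X, \dist, \m]$ to $\bigl(\m(X)/\mathcal{H}^n(X),\, [X, \dist]\bigr)$, with inverse $(c, [X, \dist]) \mapsto [X, \dist, c\,\mathcal{H}^n]$. The above classification shows $\Psi$ is a well-defined bijection. Its continuity is a direct consequence of Theorem \ref{Th: Albanese and soul continuity}: the Albanese map recovers the flat metric (since the soul is a scaled singleton), while the total mass is continuous under mGH convergence. Continuity of $\Psi^{-1}$ is obtained by promoting any GH $\epsilon$-approximation between flat metrics to an mGH $\epsilon'$-approximation (with $\epsilon' \to 0$ as $\epsilon \to 0$) between the corresponding $\RCD$-structures; this is straightforward since the measures involved are canonically proportional to Hausdorff measures and therefore transform covariantly under distortion-controlled maps.

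Finally, the identification $\mathscr{M}_{\mathrm{flat}}(\R^n/\Gamma) \simeq [\Or_n(\R)\backslash \mathcal{C}_\Gamma] / \mathcal{N}_\Gamma$ is the content of \cite{Bettiol-Derdzinski-Piccione_18}: a flat metric on $\R^n/\Gamma$ corresponds to a $\Gamma$-invariant Euclidean structure on $\R^n$, encoded by a linear change of basis $A \in \mathcal{C}_\Gamma$ conjugating the rotational parts of $\Gamma$ into $\Or_n(\R)$; two such encodings give isometric metrics if and only if they differ by left multiplication by $\Or_n(\R)$ and right multiplication by $\mathcal{N}_\Gamma$. Composing $\Psi$ with the logarithm on the $\R_{>0}$-factor yields the stated final homeomorphism. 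The main technical obstacle is the bicontinuity of $\Psi$: while the forward direction is handled cleanly by Theorem \ref{Th: Albanese and soul continuity}, the inverse direction requires Theorem \ref{th: equiv homeo} to pass to the equivariant setting, where one can build explicit equivariant mGH approximations on $\R^n$ and descend them to the quotient.
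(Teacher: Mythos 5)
Your overall strategy coincides with the paper's: lift to $\R^n$, observe that $k(\R^n/\Gamma)=n$ forces $N\geq n$ and that the soul of the splitting is a singleton, deduce that the lift is $(\R^n,\de,a\mathcal{H}^n)$, construct the bijection $[X,\dist,\m]\mapsto(\m(X)/\mathcal{H}^n(X),[X,\dist])$, and invoke Proposition~4.3 of \cite{Bettiol-Derdzinski-Piccione_18} for the final identification. The forward continuity via Theorem~\ref{Th: Albanese and soul continuity} is a legitimate alternative to the paper's argument (which instead uses Theorem~1.2 of \cite{DePhilippis-Gigli_18}): the Albanese variety $\mathcal{A}([X,\dist,\m])$ is precisely $[X,\dist]$ and the soul is the weighted singleton $(\{*\},0,a\delta_*)$ with $a=\m(X)/\mathcal{H}^n(X)$, so continuity of both maps recovers continuity of $\Phi$.

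The reverse direction is where the proposal has a genuine gap. You assert that promoting a GH $\epsilon$-approximation to an mGH approximation ``is straightforward since the measures involved are canonically proportional to Hausdorff measures and therefore transform covariantly under distortion-controlled maps,'' but this is precisely the nontrivial point: a GH $\epsilon$-approximation does \emph{not} in general have any useful effect on Hausdorff measures, and the statement that $\mathcal{H}^n$ passes continuously to the limit under GH convergence of noncollapsed $\RCD(0,n)$-spaces is exactly the content of Theorem~1.2 of \cite{DePhilippis-Gigli_18}, which the paper cites for both directions. You need that result (or an explicit matrix computation for flat metrics) here; invoking ``covariant transformation'' does not close the gap. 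Worse, the fallback you propose --- using Theorem~\ref{th: equiv homeo} to pass to the equivariant setting and build equivariant approximations on $\R^n$ --- cannot work as stated, because that theorem is only available when $\tilde X$ is \emph{compact}, and $\tilde X=\R^n$ is not compact for $n\geq1$. (The appropriate equivariant comparison in the noncompact universal cover setting is the pointed equivariant framework of \cite{Mondino-Navarro_22}, which is a different statement.)

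A smaller remark: your argument that the soul is a point (``$\tilde X\cong\R^n$ forces $\overline X$ to be compact, connected, and zero-dimensional'') reaches the right conclusion but leaves the dimension step implicit. The paper makes this precise by observing that a nondegenerate geodesic in $\overline X$ would, through the splitting, produce a continuous injection $\R^{n+1}\hookrightarrow\R^n$, contradicting invariance of domain. You should either cite a product-dimension formula valid for the relevant class of metric spaces or reproduce this argument.
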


\begin{proof}We denote $X\coloneqq\R^n/\Gamma$. Let us show that, for $N<n$, there are no $\RCD(0,N)$-structures on $X$. Indeed, $\Gamma$ is a Bieberbach subgroup of $\Iso(\R^n)$, thus $X$ is a topological manifold and $\overline{\pi}_1(X)\simeq\pi_1(\R^n/\Gamma)\simeq\Gamma$. Hence, thanks to Bieberbach's first Theorem (see Theorem 3.1 in \cite{Charlap_86}), we have $k(X)=n$ (see \eqref{eq: splitting degree}). If $(X,\dist,\m)$ is an $\RCD(0,N)$-structure on $X$, then Remark \ref{rem: splitting degree} implies that the degree of any splitting is equal to $n$ and belong to $[0,N]$; hence, $n\leq N$.\\
Now, assume that $n\leq N$. Let $(X,\dist,\m)$ be an $\RCD(0,N)$-structure on $X$ and let us prove that it is an $\RCD(0,n)$-structure. We denote $(\R^n,\tilde{\dist},\tilde{\m})\coloneqq p^*(X,\dist,\m)$ the associated lift (where $p\colon \R^n\to\R^n/\Gamma=X$ is the quotient map) and we fix a splitting $\phi$ of $(\R^n,\tilde{\dist},\tilde{\m})$ with soul $(\overline{X},\overline{\dist},\overline{\m})$. Note that $\phi$ has degree $n$. Let us show that $\overline{X}$ is a singleton. Seeking for a contradiction, we assume that there exists $\overline{x},\overline{y}\in \overline{X}$ such that $\overline{x}\neq\overline{y}$. Let $\overline{\gamma}\colon[0,L]\to\overline{X}$ be a minimizing geodesic from $\overline{x}$ to $\overline{y}$, which is parametrized by arclength. Observe that $\phi$ induces an isometric embedding $\phi^{-1}\colon\overline{\gamma}(]0,L[)\times(\R^n,\de)\to(\R^n,\tilde{\dist})$. However, $\overline{\gamma}(]0,L[)\times\R^n$ is homeomorphic to $\R^{n+1}$. Hence, $\phi$ gives rise to a continuous injective map $f\colon \R^{n+1}\to\R^n$; but no such map exists (see Corollary 2B.7 in \cite{Hatcher_02}). In conclusion, $\overline{X}$ is a singleton $\{*\}$. Now, since $\overline{\m}$ has full support, there exists $a>0$ such that $\overline{\m}=a\delta_*$. Hence, $(\overline{X},\overline{\dist},\overline{\m})\times\R^n$ is isomorphic to $(\R^n,\de,a\mathcal{H}^n)$. Moreover, since $\overline{X}$ is a singleton, then $\rho^{\phi}_{\R}$ is injective and coincides with $\phi_*$ (see \eqref{eq: euclidean and soul homomorphisms}). Thus, $\Gamma(\phi)\simeq\Gamma$ and $\phi$ induces an isomorphism $(\R^n/\Gamma,\dist,\m)\simeq(\R^n/\Gamma(\phi),\dist_{\Gamma(\phi)},a\mathcal{H}^n)$. Now, observe that $\dist_{\Gamma(\phi)}$ and $\mathcal{H}^n$ are respectively the Riemannian distance and measure associated to $\R^n/\Gamma(\phi)$, which is flat of dimension $n$. Hence, $(\R^n/\Gamma(\phi),\dist_{\Gamma(\phi)},a\mathcal{H}^n_{\dist_{\Gamma(\phi)}})$ is an $\RCD(0,n)$-space and $(X,\dist,\m)$ as well (a fortiori).\\
Let us now prove that $\M_{0,n}(\R^n/\Gamma)\simeq\R\times[\Or_n(\R)\backslash\mathcal{C}_{\Gamma}]/\mathcal{N}_{\Gamma}$. We have shown above that if $(X,\dist,\m)$ is an $\RCD(0,n)$-structure on $X$, then $[X,\dist]\in\mathscr{M}_{\mathrm{flat}}(X)$ and there exists $a>0$ such that $\m=a\mathcal{H}^n$. In particular, the map $\Phi\colon\M_{0,n}(X)\to\mathscr{M}_{\text{flat}}(X)\times\R_{>0}$ defined by $\Phi([X,\dist,\m])\coloneqq([X,\dist],\m(X)/\mathcal{H}^n(X))$ is well defined. The map $\Psi\colon\mathscr{M}_{\text{flat}}(X)\times\R_{>0}\to\M_{0,n}(X)$ defined by $\Psi([X,\dist],a)\coloneqq[X,\dist,a\mathcal{H}^n]$ is also well defined and it is clear that $\Psi$ and $\Phi$ are respectively inverse to each other.\\
Let us show that $\Phi$ is continuous. Assume that $[X,\dist_k,\m_k]\to[X,\dist_{\infty},\m_{\infty}]$ in $\M_{0,n}(X)$ and, for $k\in\N\cup\{\infty\}$, let us denote $a_k\coloneqq\m_k(X)/\mathcal{H}^n(X)$. Observe that we necessarily have $[X,\dist_k]\to[X,\dist_{\infty}]$ in the GH topology. We then notice that $(X,\dist_{\infty})$ has Hausdorff dimension $n$; hence, thanks to Theorem 1.2 of \cite{DePhilippis-Gigli_18}, $[X,\dist_k,\mathcal{H}^n]\to[X,\dist_{\infty},\mathcal{H}^n]$ in the mGH topology. Therefore, $a_k\to a_{\infty}$; thus, $\Phi$ is continuous.\\
Conversely, assume that $[X,\dist_k]\to[X,\dist_{\infty}]$ in $\mathscr{M}_{\text{flat}}(X)$ and let $a_k\to a_{\infty}$ in $\R_{>0}$. Observe that, thanks to Theorem 1.2 of \cite{DePhilippis-Gigli_18}, $[X,\dist_k,\mathcal{H}^n]\to(X,\dist_{\infty},\mathcal{H}^n)$ in the mGH topology. Hence, $[X,\dist_k,a_k\mathcal{H}^n]\to[X,\dist_{\infty},a_{\infty}\mathcal{H}^n]$ in the mGH topology; thus, $\Psi$ is continuous.\\
Now, we have shown that $\M_{0,n}(\R^n/\Gamma)$ is homeomorphic to $\R_{>0}\times\mathscr{M}_{\text{flat}}(\R^n/\Gamma)$. In order to conclude, notice that, thanks to Proposition 4.3 of \cite{Bettiol-Derdzinski-Piccione_18}, $\mathscr{M}_{\text{flat}}(\R^n/\Gamma)$ is homeomorphic to $[\Or_n(\R)\backslash\mathcal{C}_{\Gamma}]/\mathcal{N}_{\Gamma}$.
\end{proof}

\begin{Remark}Given $n\geq1$ and $\Gamma$ a Bieberbach subgroup of $\Iso(\R^n)$, $\Or_n(\R)\backslash\mathcal{C}_{\Gamma}$ is homeomorphic to $\R^d$ for some $d\in\N$ (see Theorem B in \cite{Bettiol-Derdzinski-Piccione_18}). In particular, $\M_{0,N}(\R^n/\Gamma)$ is connected for every $N\geq n$.
\end{Remark}

\subsection{The case of surfaces}

Let $X$ be a compact topological surface (with or without boundary) that admits an $\RCD(0,2)$-structure and let $(X,\dist,\m)$ be an $\RCD(0,2)$-structure on $X$. Thanks to the proof of Theorem \ref{prop: main} (see table \ref{propintro: topological obstructions}), we necessarily have $\dim(X,\dist,\m)=2$. Therefore, applying Proposition \ref{prop: semola-bru-honda}, there exists $a>0$ such that $\m=a\mathcal{H}^2$. In particular, thanks to Theorem 1.1 of \cite{Lytchak-Stadler_22}, $(X,\dist)$ is an Alexandrov space with nonnegative curvature. Therefore, proceeding exactly as in the last part of the proof of Proposition \ref{proposition: the case of flat manifolds}, we obtain the following result.

\begin{Lemma}\label{lem: case of surfaces}If $X$ is a compact topological surface with boundary (possibly empty) that admits an $\RCD(0,2)$-structure and $p\colon\tilde X\to X$ denotes its universal cover, then the following map:
\begin{equation*}
[X,\dist,\m]\in\M_{0,2}(X)\to (\m(X)/\mathcal{H}^2(X),[X,\dist])\in\R_{>0}\times\mathscr{M}_{\mathrm{curv}\geq0}(X)
\end{equation*}
is a homeomorphism, where $\mathscr{M}_{\mathrm{curv}\geq0}(X)$ is the moduli space of nonnegatively curved metrics on $X$ in the Alexandrov sense (endowed with the GH topology). In addition, the same map induces a homeomorphism:
$$
\M_{0,2}^{\mathrm{eq}}(\tilde{X})\simeq\R_{>0}\times\mathscr{M}_{\mathrm{curv}\geq0}^{\mathrm{eq}}(\tilde{X}),
$$
where $\mathscr{M}_{\mathrm{curv}\geq0}^{\mathrm{eq}}(\tilde{X})$ is the moduli space of equivariant metrics on $\tilde{X}$ that are nonnegatively curved in the Alexandrov sense (endowed with the equivariant GH topology, see Remark \ref{remark: equivariant GH topology}). 
\end{Lemma}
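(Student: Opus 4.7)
The plan is to mirror the last part of the proof of Proposition \ref{proposition: the case of flat manifolds}, with ``flat manifold'' replaced by ``non-negatively curved Alexandrov surface''. First, I would verify that the map
$\Phi \colon [X,\dist,\m] \mapsto (\m(X)/\mathcal{H}^2(X), [X,\dist])$
is well-defined: the paragraph preceding the lemma already establishes that any $\RCD(0,2)$-structure $(X,\dist,\m)$ on $X$ satisfies $\m=a\mathcal{H}^2$ for some $a>0$ by Proposition \ref{prop: semola-bru-honda}, and that $(X,\dist)$ is nonnegatively curved in the Alexandrov sense by Theorem 1.1 of \cite{Lytchak-Stadler_22}; moreover, any isomorphism of $\RCD$-structures is in particular an isometry of $(X,\dist)$ preserving $\m$, so both the ratio $\m(X)/\mathcal{H}^2(X)$ and the class $[X,\dist]$ depend only on the isomorphism class. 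Next I would introduce the candidate inverse $\Psi \colon (a,[X,\dist]) \mapsto [X,\dist, a\mathcal{H}^2]$; its well-definedness requires the converse implication, namely that for any non-negatively curved Alexandrov metric $\dist$ on $X$ and any $a>0$ the triple $(X,\dist, a\mathcal{H}^2)$ is $\RCD(0,2)$. This is the converse direction of the equivalence in \cite{Lytchak-Stadler_22} (alternatively, it follows from Petrunin's theorem that Alexandrov spaces with curvature bounded below satisfy the $\CD$ condition, together with the automatic infinitesimal Hilbertianity available in dimension at most two).

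Bijectivity is then immediate: $\Phi\circ\Psi=\mathrm{id}$ by construction, and $\Psi\circ\Phi=\mathrm{id}$ since $a=\m(X)/\mathcal{H}^2(X)$ recovers $\m$ uniquely from $(X,\dist)$.

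For continuity I would reproduce the argument in Proposition \ref{proposition: the case of flat manifolds} verbatim. Assume $[X,\dist_k,\m_k]\to[X,\dist_\infty,\m_\infty]$ in $\M_{0,2}(X)$. Forgetting the measures produces GH convergence $[X,\dist_k]\to[X,\dist_\infty]$. Since the limit space has Hausdorff dimension $2$ by Proposition \ref{propintro: topological obstructions}, Theorem 1.2 of \cite{DePhilippis-Gigli_18} upgrades this to mGH convergence $[X,\dist_k,\mathcal{H}^2]\to[X,\dist_\infty,\mathcal{H}^2]$; comparing with the hypothesis forces the scale factors $a_k\coloneqq\m_k(X)/\mathcal{H}^2(X)$ to converge to $a_\infty$, establishing continuity of $\Phi$. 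The converse continuity of $\Psi$ follows from the same volume-continuity result: if $[X,\dist_k]\to[X,\dist_\infty]$ in $\mathscr{M}_{\mathrm{curv}\geq 0}(X)$ and $a_k\to a_\infty$ in $\R_{>0}$, then $[X,\dist_k, a_k\mathcal{H}^2]\to[X,\dist_\infty, a_\infty\mathcal{H}^2]$ in the mGH topology.

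The equivariant statement is obtained by running exactly the same three steps in the equivariant setting, simply replacing every GH/mGH approximation by an equivariant one in the sense of Definition \ref{definition: equivariant mGH distance} and Remark \ref{remark: equivariant GH topology}; alternatively one may combine the non-equivariant homeomorphism just proved with Theorem \ref{th: equiv homeo} and the parallel identification $\mathscr{M}_{\mathrm{curv}\geq 0}(X)\simeq\mathscr{M}_{\mathrm{curv}\geq 0}^{\mathrm{eq}}(\tilde X)$ obtained by lifting metrics. The main technical point to watch is the applicability of \cite{DePhilippis-Gigli_18}, which demands that the Hausdorff dimension of the limit equals $2$; this is supplied by Proposition \ref{propintro: topological obstructions} combined with Proposition \ref{prop: semola-bru-honda}, so no further obstacle arises.
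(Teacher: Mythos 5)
Your proposal is correct and follows essentially the same route as the paper: the paper itself reduces the statement to ``proceeding exactly as in the last part of the proof of Proposition~\ref{proposition: the case of flat manifolds}'', and you reproduce exactly that argument (well-definedness via Proposition~\ref{prop: semola-bru-honda} and \cite{Lytchak-Stadler_22}, inverse map, continuity in both directions via Theorem~1.2 of \cite{DePhilippis-Gigli_18}, and the equivariant analogue). You are also right to flag that the well-definedness of $\Psi$ needs the \emph{converse} implication in \cite{Lytchak-Stadler_22} (that a nonnegatively curved Alexandrov surface with $\mathcal{H}^2$ is $\RCD(0,2)$), a point the paper passes over silently. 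One small caveat: your suggested shortcut for the equivariant part via Theorem~\ref{th: equiv homeo} requires $\tilde X$ compact, which does hold in the three cases where the equivariant claim is actually used ($\mathbb{S}^2$, $\R\mathbb{P}^2$, $\mathbb{D}$), but the direct ``run the same argument with equivariant approximations'' route you also describe is the one that yields the lemma in the generality stated.
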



\section{Moduli spaces in the 1-dimensional
case}\label{sec6}

\subsection{The circle}

\begin{Proposition}\label{prop: circle}The moduli space $\M_{0,2}(\mathbb{S}^1)$ of $\RCD(0,2)$-structures on $\mathbb{S}^1$ is homeomorphic to $\R^2$; in particular, it is contractible.\end{Proposition}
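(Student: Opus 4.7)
The plan is to reduce Proposition \ref{prop: circle} to a direct application of Proposition \ref{proposition: the case of flat manifolds}. Indeed, $\mathbb{S}^1 \simeq \R/\Gamma$ where $\Gamma \coloneqq \Z \subset \Iso(\R)$ is the Bieberbach group of integer translations, and this is (up to isomorphism) the only Bieberbach subgroup of $\Iso(\R)$. Since $n = 1 \leq N = 2$, the proposition tells us that every $\RCD(0,2)$-structure on $\mathbb{S}^1$ is in fact an $\RCD(0,1)$-structure, so $\M_{0,2}(\mathbb{S}^1) = \M_{0,1}(\mathbb{S}^1)$ as topological spaces, and moreover gives
\[
\M_{0,2}(\mathbb{S}^1) \simeq \R \times \bigl[\Or_1(\R)\backslash\mathcal{C}_{\Gamma}\bigr]/\mathcal{N}_{\Gamma}.
\]

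The main step is then to compute the right-hand side by chasing through Definition \ref{Notation: crystallographic}. Since every element of $\Gamma = \Z$ is a pure translation, the rotational part is trivial, $H_{\Gamma} = \{1\}$, and consequently $\mathcal{C}_{\Gamma} = \GL_1(\R) = \R^{*}$. For the normalizer, an affine map $x \mapsto ax + b$ with $a \in \R^{*}$ conjugates $\Z$ to $a\Z$, so it lies in $\mathrm{N}_{\Aff(\R)}(\Gamma)$ precisely when $a\Z = \Z$, i.e. $a \in \{\pm 1\}$; hence $\mathcal{N}_{\Gamma} = \{\pm 1\}$. The quotient $\Or_1(\R)\backslash\mathcal{C}_{\Gamma} = \{\pm 1\}\backslash\R^{*}$ is canonically homeomorphic to $\R_{>0}$ via the absolute value. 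Finally, the right action of $\mathcal{N}_{\Gamma} = \{\pm 1\}$ on this quotient is trivial, because $-1$ has already been identified with $1$ in the first quotient. Therefore $[\Or_1(\R)\backslash\mathcal{C}_{\Gamma}]/\mathcal{N}_{\Gamma} \simeq \R_{>0} \simeq \R$.

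Combining these two facts gives $\M_{0,2}(\mathbb{S}^1) \simeq \R \times \R = \R^2$, where geometrically the two factors correspond to the total mass $\m(\mathbb{S}^1)$ and the length of the circle. Contractibility is then immediate from the contractibility of $\R^2$. I do not expect any serious obstacle: all the work is done by Proposition \ref{proposition: the case of flat manifolds}, and the only real content is the elementary identification of $H_{\Gamma}$, $\mathcal{C}_{\Gamma}$, and $\mathcal{N}_{\Gamma}$ for the one-dimensional Bieberbach group $\Z$.
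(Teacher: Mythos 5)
Your proposal is correct and follows essentially the same route as the paper: both reduce to Proposition \ref{proposition: the case of flat manifolds} for $\Gamma = \Z \subset \Iso(\R)$ and then compute $[\Or_1(\R)\backslash\mathcal{C}_\Gamma]/\mathcal{N}_\Gamma \simeq \R$. The only cosmetic difference is that you explicitly unwind the definitions to get $H_\Gamma = \{1\}$, $\mathcal{C}_\Gamma = \GL_1(\R)$, and $\mathcal{N}_\Gamma = \GL_1(\Z)$, whereas the paper plugs these in directly and justifies the triviality of the $\mathcal{N}_\Gamma$-action by commutativity of $\R^*$ rather than by the coincidence $\mathcal{N}_\Gamma = \Or_1(\R)$, but these are the same observation.
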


\begin{proof}
Given $n\geq1$ and $N\geq n$, Proposition \ref{proposition: the case of flat manifolds} implies that we have a homeomorphism:
\begin{equation}
\M_{0,N}(\R^n/\Z^n)\simeq\R\times[\Or_n(\R)\backslash\GL_n(\R)]/\GL_n(\Z).
\end{equation}
When $n=1$, we have $\Or_1(\R)=\GL_1(\Z)=\{\pm1\}$. In addition, $\GL_1(\R)=\R^*$ is commutative; hence, $\GL_1(\Z)$ acts trivially on $\Or_1(\R)\backslash\GL_1(\R)$. Therefore, $[\Or_1(\R)\backslash\GL_1(\R)]/\GL_1(\Z)$ is homeomorphic to $\{\pm1\}\backslash\R^*$, i.e. is homeomorphic to $\R_{>0}$, which is itself homeomorphic to $\R$. In conclusion, for every $N\geq1$, $\M_{0,N}(\mathbb{S}^1)$ is homeomorphic to $\R^2$, which concludes the proof.
\end{proof}

\begin{Remark}It is easily checked that $\Psi\colon[\mathbb{S}^1,\dist,\m]\in\M_{0,N}(\mathbb{S}^1)\to(\Diam(\mathbb{S}^1,\dist),\m(\mathbb{S}^1))\in\R_{>0}^2 $ is an explicit homeomorphism.
\end{Remark}

\subsection{The unit interval}\label{sec: the unit interval}

\begin{Notation}[Space of concave functions]\label{not: concave}We denote $\mathcal{C}^*$ the space of concave functions $f\colon I\to\R$ such that $f$ is strictly positive on $\mathrm{int}(I)$. We endow $\mathcal{C}^*$ with the topology of uniform convergence on compact subsets of $\mathrm{int}(I)$. The aforementioned topology is metrizable with the following distance:
$$
\dist_{\Class^*}(f,g)\coloneqq\sum_{k=0}^{\infty}2^{-k}\min\{1,\dist_k(f,g)\},
$$
where $f,g\in\Class^*$ and $\dist_k(f,g)\coloneqq\sup_{t\in[2^{-k},1-2^{-k}]}\{\lvert f(t)-g(t)\rvert\}$.\\
For every $f\in\Class^*$, we define $-1\cdot f(t)\coloneqq f(1-t)$, which gives rise to an action of $\{\pm1\}$ on $\Class^*$. We denote $\Class^*/\{\pm1\}$ the quotient of $\mathcal{C}^*$ by the action of $\{\pm1\}$, endowed with the quotient topology.
\end{Notation}

\begin{Remark}Observe that $\{\pm1\}$ acts by isometries on $(\Class^*,\dist_{\Class^*})$. Therefore, the distance $\dist_{\Class^*/\{\pm1\}}([f],[g])\coloneqq\min\{\dist_{\Class^*}(f,g),\dist_{\Class^*}(f,-1\cdot g)\}$ metrizes the topology of $\mathcal{C}^*/\{\pm1\}$.
\end{Remark}

\begin{Proposition}\label{prop9.3}The moduli space $\M_{0,1}(I)$ is homeomorphic to $\R^2$. Moreover, for every $N\in(1,\infty)$, the moduli space $\M_{0,N}(I)$ is homeomorphic to $\R\times\{\mathcal{C}^*/\{\pm1\}\}$ (which is contractible).
\end{Proposition}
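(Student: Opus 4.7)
The plan is to identify each $\RCD(0,N)$-structure on $I$ with a length parameter together with a normalized density profile, and then promote this identification to a homeomorphism.

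First I would reduce to a normal form. Given an $\RCD(0,N)$-structure $(I,\dist,\m)$, Proposition \ref{propintro: topological obstructions} forces $\dim(I,\dist,\m)=1$ (the $\dim=0$ case being excluded since $I$ is not a singleton), and Proposition \ref{prop: semola-bru-honda} yields $\m\ll\mathcal{H}^1$. Since $(I,\dist)$ is a compact geodesic space homeomorphic to $[0,1]$, a standard arclength reparametrization produces an isometry $(I,\dist)\simeq([0,L],\de)$ with $L=\Diam(I,\dist)>0$, under which $\m=h\,\mathcal{L}^1$ for a measurable density $h\geq0$ with support $[0,L]$. Then I would invoke the one-dimensional structure theory for $\CD(0,N)$-spaces developed in \cite{Cavalletti-Milman_21} (in dimension one $\CD$ and $\RCD$ coincide, since there is no non-Riemannian Finsler phenomenon) to classify the admissible densities: for $N=1$, $h$ is a positive constant $a$; for $N>1$, $h$ agrees $\mathcal{L}^1$-a.e.\ with $f^{N-1}$ for some $f\colon[0,L]\to\R_{\geq0}$ concave and strictly positive on $(0,L)$. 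Since the only nontrivial isometry of $([0,L],\de)$ is the reflection $t\mapsto L-t$, the isomorphism class of the structure is determined by the pair $(L,[f])$, where $[f]$ denotes the $\{\pm1\}$-orbit. The rescaling $g(s)\coloneqq f(Ls)$ sends $[f]$ to an element of $\mathcal{C}^*/\{\pm1\}$, producing a set-theoretic bijection $\M_{0,N}(I)\simeq\R_{>0}\times[\mathcal{C}^*/\{\pm1\}]$ for $N>1$, and $\M_{0,1}(I)\simeq\R_{>0}\times\R_{>0}$ for $N=1$ (the reflection acting trivially on constants).

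The remaining work is to upgrade this bijection to a homeomorphism for the mGH topology. In the forward direction, given $(L_n,[g_n])\to(L_\infty,[g_\infty])$ with representatives so that $g_n\to g_\infty$ uniformly on compact subsets of $(0,1)$, the affine rescalings $[0,L_n]\to[0,L_\infty]$ yield explicit mGH $\epsilon_n$-approximations; the Prokhorov estimate on the push-forward measures follows from the uniform interior convergence of $g_n^{N-1}$ together with tightness, which is provided by the uniform pointwise upper bound on $g_n$ on any compact subinterval of $(0,1)$ coming from concavity. In the converse direction, mGH convergence forces $L_n\to L_\infty$ (from convergence of diameters) and, after transport, weak convergence of the measures. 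An Arzel\`a--Ascoli argument using the local Lipschitz bounds for concave positive functions (derived from local uniform boundedness, which in turn comes from weak convergence and concavity) forces locally uniform convergence $g_n\to g_\infty$ on $(0,1)$ along subsequences, and uniqueness of the density identifies the limit, yielding $[g_n]\to[g_\infty]$ in $\mathcal{C}^*/\{\pm1\}$.

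The hard part will be the careful handling of the endpoint behaviour of $f$, which may tend to zero at $0$ or $L$ and thus reflects a collapse of mass near the boundary; constructing explicit mGH approximations and controlling the Prokhorov distance near the endpoints requires an additional truncation argument to guarantee uniform tightness. Once the homeomorphism is established, contractibility is immediate: $\mathcal{C}^*$ is a convex cone and the $\{\pm1\}$-action is affine, so the straight-line homotopy from any $f$ to a fixed symmetric element (for instance, the constant function $1$) descends to a contraction of $\mathcal{C}^*/\{\pm1\}$; combined with $\R_{>0}\simeq\R$ this gives the claim for $N>1$, while the identification $\R_{>0}^2\simeq\R^2$ handles $N=1$.
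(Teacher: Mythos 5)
Your proposal follows essentially the same route as the paper's proof: reduce to the normal form $(I,L\de,g^{N-1}\mathcal{L}^1)$ via the one-dimensional structure theory of \cite{Cavalletti-Milman_21}, identify isomorphism classes using the two-element isometry group of the interval to get the bijection $\M_{0,N}(I)\simeq\R_{>0}\times[\mathcal{C}^*/\{\pm1\}]$, upgrade it to a homeomorphism by an Arzel\`a--Ascoli / weak-$*$ convergence argument resting on concavity, and finish with the straight-line contraction $H(t,[f])=[t\tilde 1+(1-t)f]$. So the strategy matches.

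One detail is stated imprecisely, though you flag the right difficulty. You locate the control needed near the endpoints in a ``uniform pointwise upper bound on $g_n$ on any compact subinterval of $(0,1)$'' and speak of an ``additional truncation argument to guarantee uniform tightness.'' A bound on a compact subinterval such as $[1/4,3/4]$ does not by itself control the measure near the endpoints; what the paper actually uses is that nonnegativity and concavity propagate that interior bound to a \emph{global} $L^{\infty}$ bound $\lVert g_n\rVert_{L^{\infty}}\leq 3\max_{[1/4,3/4]}g_n$ (in the $\Psi$ direction), while in the $\Phi$ direction the global bound comes from $\sup f_n\leq N\lVert f_n\rVert_{L^1}$ (Lemma A.8 of \cite{Cavalletti-Milman_21}). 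Once the global bound is in hand, no truncation is needed: the weak-$*$ estimate $\lvert\int_I f(g_n^{N-1}-g^{N-1})\rvert\leq 4\epsilon\overline M^{N-1}\lVert f\rVert_{\infty}+\lVert f\rVert_{\infty}\dist_{\epsilon}(g_n^{N-1},g^{N-1})$ closes the argument directly. Also, note that $f\to 0$ at the endpoints is the benign scenario for tightness (vanishing density means negligible mass there); the thing to rule out is a non-uniformly-bounded density near the endpoints, which is precisely what the concavity argument excludes.
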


\begin{proof}\underline{We start with the case $N=1$.} Let us first assume that $(I,\dist,\m)$ is an $\RCD(0,1)$-structure on $I$. Thanks to the proof of Theorem \ref{prop: main}, we necessarily have $\dim(X,\dist,\m)=1$. Therefore, thanks to Proposition \ref{prop: semola-bru-honda}, there exists $a>0$ such that $\m=a\mathcal{H}^1$. Moreover, thanks to Theorem 1.1 of \cite{Kitabeppu-Lakzian_16}, $(I,\dist)$ is isometric to $(I,L\de)$, where $L\coloneqq\Diam(I,\dist)$. Hence, the map $\Psi\colon\R_{>0}\times\R_{>0}\to\M_{0,1}(I)$ defined by $\Psi(L,a)\coloneqq[I,L\de,a\mathcal{H}^1]$ is surjective. It is then readily checked that $\Phi\colon\M_{0,1}(I)\to\R_{>0}\times\R_{>0}$ defined by $\Phi([I,\dist,\m])\coloneqq(\Diam(I,\dist),\m(I)/\mathcal{H}^1(I))$ is an inverse. Moreover, thanks to Theorem 1.2 of \cite{DePhilippis-Gigli_18}, we can prove that $\Phi$ is continuous (just proceeding the same way as in the last part of the proof of Proposition \ref{proposition: the case of flat manifolds}). Finally, the proof of $\Psi$'s continuity is trivial; thus, $\M_{0,1}(I)$ is homeomorphic to $\R^2$.\\
\underline{From now on we assume that $1<N$.} If $(I,\dist,\m)$ is an $\RCD(0,N)$-structure on $I$, then $(I,\dist,\m)$ is isomorphic to $(I,L\de,\m')$, where $L\coloneqq\Diam(I,\dist)$ and $\m'$ is a finite Radon measure on $I$. Thanks to Theorem A.2 of \cite{Cavalletti-Milman_21}, there exists $g\in\Class^*$ (see Notation \ref{not: concave}) such that $\m'=g^{N-1}\mathcal{L}^1$. Conversely, thanks to Theorem A.2 of \cite{Cavalletti-Milman_21}, for every $g\in\Class^*$ and $L>0$, $(I,L\de,g^{N-1}\mathcal{L}^1)$ is an $\RCD(0,N)$-structure on $I$. Now, if $g_1,g_2\in\Class^*$ satisfy $[I,\de,g_1^{N-1}\mathcal{L}^1]=[I,\de,g_2^{N-1}\mathcal{L}^1]$, then, there exists $\phi\in\Iso(I,\de)$ such that $g_2^{N-1}\mathcal{L}^1=\phi_*(g_1^{N-1}\mathcal{L}^1)=(g_1\circ\phi^{-1})^{N-1}\mathcal{L}^1$; hence, $g_1=g_2\circ\phi$. However, $\Iso(I,\de)$ consists of $2$ elements, the identity $\id_I$ and the symmetry $t\to 1-t$. Thus, $[g_1]=[g_2]\in\Class^*/\{\pm1\}$. Conversely, if $g_1,g_2\in\Class^*$ satisfy $[g_1]=[g_2]\in\Class^*/\{\pm1\}$, then $(I,\de,g_1^{N-1})$ is isomorphic to $(I,\de,g_2^{N-1})$. Therefore, the following two maps are well defined and respectively inverse to each other:
\begin{itemize}
\item $\Phi\colon\M_{0,N}(I)\to\R_{>0}\times\Class^*/\{\pm1\}$ defined by $\Phi([I,\dist,\m])\coloneqq(L,[g])$, where $L\coloneqq\Diam(I,\dist)$ and $g\in\Class^*$ satisfies $[I,\dist,\m]=[I,L\de,g^{N-1}\mathcal{L}^1]$,
\item $\Psi\colon\R_{>0}\times\Class^*/\{\pm1\}\to\M_{0,N}(I)$ defined by $\Psi(L,[g])\coloneqq[I,L\de,g^{N-1}\mathcal{L}^1]$.
\end{itemize}
\underline{First, we show that $\Phi$ is continuous.} Assume that $[I,\dist_n,\m_n]\to[I,\dist_{\infty},\m_{\infty}]$ in $\M_{0,N}(I)$ and, for every $n\in\N\cup\{\infty\}$, denote $(L_n,[g_n])\coloneqq\Phi([I,\dist_n,\m_n])$. Observe that $(I,\dist_n)$ converge to $(I,\dist_{\infty})$ in the GH topology; in particular, $L_n\to L_{\infty}$. Now, let us show that $[g_n]\to[g_{\infty}]$ in $\Class^*/\{\pm1\}$. Note that it is sufficient to prove that every subsequence of $\{g_n\}$ admits a subsequence converging to $\nu\cdot g_{\infty}$ for some $\nu\in\{\pm1\}$. We'll just show that $\{g_n\}$ admits a subsequence converging to $\nu\cdot g_{\infty}$ for some $\nu\in\{\pm1\}$ (the proof for a subsequence of $\{g_n\}$ being exactly the same).\\
Observe first that $\{g_n\}$ is uniformly bounded in $L^{\infty}(I)$. Indeed, $f_n\coloneqq g_n^{N-1}$ is a $\CD(0,N)$-density on $\mathrm{int}(I)$ (see Definition A.1 of \cite{Cavalletti-Milman_21}). Hence, for every $n\in\N$, we have $\sup_If_n\leq N\lvert f_n\rvert_{L^1}$ (see Lemma A.8 of \cite{Cavalletti-Milman_21}). Moreover, observe that $\lvert f_n\rvert_{L^1}=\m_n(I)\leq M$, where $M\coloneqq\sup_{n\in\N}\{\m_n(I)\}$. Hence, for every $n\in\N$, we obtain $
\rvert g_n\lvert_{L^{\infty}}\leq (NM)^{1/N-1}\eqqcolon\overline{M}$.\\
Now, observe that for every $\epsilon>0$, $\{g_n\}$ is equicontinuous on $I_{\epsilon}\coloneqq[\epsilon,1-\epsilon]$. Indeed, given $n\in\N$, $x\in I_{\epsilon}$, and $t\in[0,1]$, we have the following inequality:
\begin{equation*}
-\overline{M}/1-x\leq-g_n(x)/1-x\leq g_{n,r}'(x)\leq g_{n,l}'(x)\leq g_n(x)/x\leq\overline{M}/x,
\end{equation*}
since $g_n$ is concave and positive on $\mathrm{int}(I)$ (we denoted $g_{n,r}'$ and $g_{n,l}'$ the right and left derivatives of $g_n$, respectively). Hence, for every $x\in I_{\epsilon}$, we have $\mathrm{Lip}_x(g_n)\leq\overline{M}/\epsilon$. Therefore, $\{g_n\}$ is equicontinuous on $I_{\epsilon}$.\\
Now, passing to a subsequence if necessary, we can assume that $g_n$ is converging uniformly to some continuous function $g\colon(0,1)\to\R$ on every compact subset $K\subset(0,1)$ (applying Arzel\`{a}--Ascoli Theorem and a diagonal argument). Observe that $g$ is nonnegative and concave on $(0,1)$; hence, we can assume that $g$ is continuous on $I$. A fortiori, note that $\{g_n^{N-1}\}$ converges uniformly to $g^{N-1}$ on compact subsets of $(0,1)$. Let us prove that there exists $\nu\in\{\pm1\}$ such that $g=\nu\cdot g_{\infty}$.\\
First, observe that $g_n^{N-1}\mathcal{L}^1\to g^{N-1}\mathcal{L}^1$ in the weak-$*$ topology. Indeed, let us fix $f\in\Class^0(I)$ and let $\epsilon>0$. Denoting $\dist_{\epsilon}(g_n^{N-1},g^{N-1})\coloneqq\sup_{I_{\epsilon}}(\lvert g_n^{N-1}-g^{N-1} \rvert)$ and splitting the integral into three part, we easily obtain $\lvert \int_If(g_n^{N-1}-g^{N-1})\rvert\leq 4\epsilon\overline{M}^{N-1}\lvert f\rvert_{L^{\infty}} +\lvert f\rvert_{L^{\infty}}\dist_{\epsilon}(g_n^{N-1},g^{N-1})$. In particular, for every $\epsilon>0$, we have $\limsup_{n\to\infty}\lvert \int_If(g_n^{N-1}-g^{N-1})\rvert\leq4\epsilon\overline{M}^{N-1}\lvert f\rvert_{L^{\infty}}$. Hence, for every continuous function $f\in\Class^0(I)$, we obtain $\lim_{n\to\infty}\int_Ifg_n^{N-1}=\int_Ifg^{N-1}$. This implies that $\{(I,L_n\de,g_n^{N-1}\mathcal{L}^1)\}$ converges in the mGH topology to $(I,L_{\infty}\de,g^{N-1}\mathcal{L}^1)$. However, $\{(I,L_n\de,g_n^{N-1}\mathcal{L}^1)\}$ also converges to $(I,L_{\infty}\de,g_{\infty}^{N-1}\mathcal{L}^1)$ in the mGH topology. Thus, there exists an isometry $\phi\colon(I,L_{\infty}\de)\to(I,L_{\infty}\de)$ such that $\phi_*(g_{\infty}^{N-1}\mathcal{L}^1)=g^{N-1}\mathcal{L}^1$, i.e. $g=g_{\infty}\circ\phi^{-1}$. However, $\Iso(I,L_{\infty}\de)$ consists of two elements, the identity $\id_I$ and the symmetry with center $1/2$. Thus, $g=\nu\cdot g_{\infty}$ for some $\nu\in\{\pm1\}$, which concludes the proof of $\Phi$'s continuity.\\
\underline{Now, we are going to show that $\Psi=\Phi^{-1}$ is continuous.} Assume that $\{L_n,[g_n]\}$ converges to $(L_{\infty},[g_{\infty}])$ in $\R_{>0}\times\{\Class^*/\{\pm1\}\}$. This implies that there exists a sequence $\{\nu_n\}$ in $\{\pm1\}$ such that $\{\nu_n\cdot g_n\}$ converges to $g_{\infty}$ uniformly on compact subsets of $(0,1)$. Let us denote $\tilde{g}_n\coloneqq\nu_n\cdot g_n$ and observe that, for every $n\in\N$, $(I,L_n,g_n^{N-1}\mathcal{L}^1)$ and $(I,L_n,\tilde{g}_n^{N-1}\mathcal{L}^1)$ are isomorphic. We need to show that $\{(I,L_n,\tilde{g}_n^{N-1}\mathcal{L}^1)\}$ converges to $(I,L_{\infty},g_{\infty}^{N-1}\mathcal{L}^1)$ in the mGH topology. Since $\lvert L_n-L_{\infty}\rvert\to0$, it is sufficient to show that $\tilde{g}_n^{N-1}\mathcal{L}^1$ converges to $g_{\infty}^{N-1}\mathcal{L}^1$ in the weak-$*$ topology. Moreover, proceeding exactly as in the last paragraph, it is enough to prove that $\{\tilde{g}_n\}$ is uniformly bounded in $L^{\infty}(I)$. Let $n\in\N$ and observe that, thanks to the nonnegativity and concavity of $\tilde{g}_n$, we have the following three cases:
\begin{itemize}
\item if $x<1/4$, we have $\tilde{g}_n(x)\leq\tilde{g}_n(1/2)+2(1-2x)(\tilde{g}_n(1/4)-\tilde{g}_n(1/2))$, which implies that $\tilde{g}_n(x)\leq3\max_{[1/4,3/4]}\{\tilde{g}_n\}$,
\item if $x>3/4$, we have $\tilde{g}_n(x)\leq\tilde{g}_n(1/2)+2(2x-1)(\tilde{g}_n(3/4)-\tilde{g}_n(1/2))$, which implies that $\tilde{g}_n(x)\leq 3\max_{[1/4,3/4]}\{\tilde{g}_n\}$,
\item if $x\in[1/4,3/4]$, then $\tilde{g}_n(x)\leq\max_{[1/4,3/4]}\{\tilde{g}_n\}$.
\end{itemize}
Hence, for every $n\in\N$, we have $\lvert \tilde{g}_n\rvert_{L^{\infty}}\leq3\max_{[1/4,3/4]}\{\tilde{g}_n\}$. However, $\{\tilde{g}_n\}$ converges uniformly to $g_{\infty}$ on $[1/4,3/4]$, hence $\sup_{n\in\N}\{\max_{[1/4,3/4]}\{\tilde{g}_n\}\}<\infty$, which concludes the proof of $\Psi$'s continuity. Therefore, $\M_{0,N}(I)$ is homeomorphic to $\R_{>0}\times\{\Class^*/\{\pm1\}\}$, which is itself homeomorphic to $\R\times\{\Class^*/\{\pm1\}\}$.\\
\underline{Now, note that $\Class^*/\{\pm1\}$ is contractible.} Indeed, observe that the map $H\colon I\times\{\Class^*/\{\pm1\}\}\to\Class^*/\{\pm1\}$ defined by $H(t,[f])\coloneqq[t\tilde{1}+(1-t)f]$ is a retract by deformation of $\Class^*/{\pm1}$ onto $\{[\tilde{1}]\}$ (where $\tilde{1}$ is the function constantly equal to $1$). This concludes the proof of Proposition \ref{prop9.3}.
\end{proof}

\section{Moduli spaces in the 2-dimensional
case}\label{sec7}

In this section, we will describe the moduli spaces $\M_{0,2}(X)$, where $X$ has dimension $2$. We will start with the spaces whose splitting degree $k(X)$ is equal to $2$, namely the $2$-torus and the Klein bottle $\mathbb{K}^2$. We will then proceed with the spaces satisfying $k(X)=1$, namely the cylinder $\mathbb{S}^1\times I$ and the M\"obius band $\mathbb{M}^2$. Finally, we will study the case where $k(X)=0$, which corresponds to the $2$-sphere $\mathbb{S}^2$, the projective plane $\R\mathbb{P}^2$, and the closed $2$-disc $\mathbb{D}$.

\subsection{The 2-torus and the Klein bottle}

\subsubsection{The 2-torus}

\begin{Proposition}\label{prop: torus}The moduli space $\M_{0,2}(\mathbb{T}^2)$ is homeomorphic to $\R^4$; in particular, it is contractible.
\end{Proposition}

\begin{proof}According to Proposition \ref{proposition: the case of flat manifolds}, there is a homeomorphism:
\begin{equation}
\M_{0,2}(\mathbb{T}^2)\simeq\R\times\mathscr{M}_{\mathrm{flat}}(\mathbb{T}^2).
\end{equation}
Moreover, as a result of Section 2.1 of \cite{Perez_20}, $\mathscr{M}_{\mathrm{flat}}(\mathbb{T}^2)$ is homeomorphic to $\R^3$. Therefore, $\M_{0,2}(\mathbb{T}^2)$ is homeomorphic to $\R^4$.
\end{proof}

\subsubsection{The Klein bottle}

\begin{Proposition}\label{prop: klein}The moduli space $\M_{0,2}(\mathbb{K}^2)$ of $\RCD(0,2)$-structures on $\mathbb{K}^2$ is homeomorphic to $\R^3$; in particular, it is contractible.
\end{Proposition}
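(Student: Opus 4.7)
The plan is to apply Proposition \ref{proposition: the case of flat manifolds} to $\mathbb{K}^2 = \R^2/\Gamma$, where $\Gamma$ is the Klein bottle Bieberbach subgroup of $\Iso(\R^2)$. This immediately yields a homeomorphism
\[
\M_{0,2}(\mathbb{K}^2) \simeq \R \times \bigl([\Or_2(\R)\backslash \mathcal{C}_\Gamma] / \mathcal{N}_\Gamma\bigr),
\]
so it suffices to identify the right-hand quotient with $\R^2$. I would fix standard generators $\alpha = (R, (1,0))$ and $\beta = (I, (0,1))$, with $R = \diag(1,-1)$, so that the point group is $H_\Gamma = \{I, R\}$. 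A short direct calculation shows that the elements of $\Gamma$ with rotational part $I$ are exactly the translations by vectors in $2\Z \times \Z$, while those with rotational part $R$ are the affine maps $(R, t)$ with $t \in (2\Z+1) \times \Z$; these two facts will control all later normalizer conditions.

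To compute $\mathcal{C}_\Gamma$, I would observe that for any $A \in \GL_2(\R)$ the conjugate $ARA^{-1}$ has spectrum $\{+1,-1\}$ and hence is a linear reflection; it lies in $\Or_2(\R)$ precisely when its $\pm 1$-eigenspaces, which are the lines spanned by the columns of $A$, are orthogonal. Writing $v_i \coloneqq A e_i$, a Gram--Schmidt-type decomposition then gives $A = O D$ with $O \in \Or_2(\R)$ the matrix with columns $v_i/|v_i|$ and $D = \diag(|v_1|, |v_2|)$, so $\Or_2(\R) \backslash \mathcal{C}_\Gamma$ is canonically in bijection with the positive diagonal matrices, i.e.\ with $\R_{>0}^2$.

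Next I would compute $\mathcal{N}_\Gamma$. Any $(A,v) \in \mathrm{N}_{\Aff(\R^2)}(\Gamma)$ must send $R$ under conjugation into the point group of $\Gamma$, which forces $ARA^{-1} = R$ and hence $A = \diag(a,d)$. Expanding $(A,v)\alpha(A,v)^{-1}$ and $(A,v)\beta(A,v)^{-1}$ and requiring both to lie in $\Gamma$ yields the conditions $a \in 2\Z+1$, $d \in \Z \setminus \{0\}$, and $v_2 \in \tfrac{1}{2}\Z$; applying the same analysis to $(A,v)^{-1}$ (necessary because the normalizer is a group) upgrades these to $1/a \in 2\Z+1$ and $1/d \in \Z$, forcing $a, d \in \{\pm 1\}$. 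Thus $\mathcal{N}_\Gamma = \{\diag(\epsilon_1,\epsilon_2) : \epsilon_i \in \{\pm 1\}\} \subset \Or_2(\R)$.

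Finally, for any representative $D = \diag(r_1, r_2)$ with $r_i > 0$ and any $B \in \mathcal{N}_\Gamma$, diagonal matrices commute, so $DBD^{-1} = B \in \Or_2(\R)$ and hence $[DB] = [D]$ in $\Or_2(\R)\backslash \mathcal{C}_\Gamma$; the right action of $\mathcal{N}_\Gamma$ is therefore trivial. Consequently $[\Or_2(\R)\backslash \mathcal{C}_\Gamma]/\mathcal{N}_\Gamma \simeq \R_{>0}^2 \simeq \R^2$, and $\M_{0,2}(\mathbb{K}^2) \simeq \R \times \R^2 \simeq \R^3$, which is contractible. The main obstacle is the computation of $\mathcal{N}_\Gamma$: one has to use both containments in $g\Gamma g^{-1} = \Gamma$ in order to collapse the a priori integrality constraints down to the finite set $\{\pm 1\}^2$, after which the triviality of its action modulo $\Or_2(\R)$ is immediate.
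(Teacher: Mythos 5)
Your proof is correct and follows the same overall template as the paper: invoke Proposition \ref{proposition: the case of flat manifolds} to reduce to the quotient $[\Or_2(\R)\backslash\mathcal{C}_\Gamma]/\mathcal{N}_\Gamma$, compute $\mathcal{N}_\Gamma = \{\diag(\epsilon_1,\epsilon_2): \epsilon_i\in\{\pm1\}\}$ by direct analysis of the affine normalizer, observe that this finite group acts trivially because everything in sight is diagonal, and identify the quotient with $\R_{>0}^2$. (You work with the Klein-bottle group conjugate to the paper's --- $R=\diag(1,-1)$ versus $\sigma=\diag(-1,1)$ --- but these are conjugate in $\Or_2(\R)$, so nothing changes.)

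The genuine difference is in how the two proofs handle $\mathcal{C}_\Gamma$ and $\Or_2(\R)\backslash\mathcal{C}_\Gamma$. The paper outsources these to Proposition 4.8 and Corollary 4.9 of \cite{Bettiol-Derdzinski-Piccione_18}, which express $\mathcal{C}_\Gamma$ as $\Or_2(\R)\cdot\mathcal{Z}$ for the centralizer $\mathcal{Z}$ of $H_\Gamma$ and identify the left quotient with $(\Or_2(\R)\cap\mathcal{Z})\backslash\mathcal{Z}$. You instead observe directly that $ARA^{-1}$ is orthogonal iff the columns of $A$ are orthogonal, and then use a polar/Gram--Schmidt-type factorization $A=OD$ to identify $\Or_2(\R)\backslash\mathcal{C}_\Gamma$ with $\R_{>0}^2$ by $[A]\mapsto(|Ae_1|,|Ae_2|)$. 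This makes the argument self-contained at the cost of being slightly more ad hoc; it works painlessly in dimension $2$ because the reflection's eigenspaces are lines. Both routes are valid, and your normalizer computation (using the inverse to force $a,d\in\{\pm1\}$, which the paper states without detail) is a nice clarification of a step the paper compresses.
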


\begin{proof}We denote $\Gamma$ the Bieberbach subgroup of $\Iso(\R^2)$ generated by $a\coloneqq(I_2,e_1)$ and $b\coloneqq(\sigma,e_2)$, where $(e_1,e_2)$ is the canonical basis of $\R^2$, $I_2=\diag(1,1)$ is the identity matrix, and $\sigma\coloneqq\diag(-1,1)$. Let us recall that by definition $\mathbb{K}^2=\R^2/\Gamma$. Therefore, thanks to Proposition \ref{proposition: the case of flat manifolds}, there is a homeomorphism:
\begin{equation}\label{eq: klein}
\M_{0,2}(\mathbb{K}^2)\simeq\R\times[\Or_2(\R)\backslash\mathcal{C}_{\Gamma}]/\mathcal{N}_{\Gamma}.
\end{equation}
It is then readily checked that $
\mathrm{N}_{\Aff(\R^2)}(\Gamma)=\{(\diag(\epsilon_1,\epsilon_2),v),\epsilon_i\in\{\pm1\}, 2\braket{v,e_1}\in\Z\}$.
Therefore, we have $\mathcal{N}_{\Gamma}=\{\diag(\epsilon_1,\epsilon_2),\epsilon_i\in\{\pm1\}\}$ (see Definition \ref{Notation: crystallographic}). Thanks to Proposition 4.8 of \cite{Bettiol-Derdzinski-Piccione_18}, we have $\mathcal{C}_{\Gamma}=\Or_2(\R)\cdot\mathcal{Z}$, where $\mathcal{Z}$ is the centralizer of $H_{\Gamma}$ in $\GL_{2}(\R)$. In addition, $H_{\Gamma}$ is the subgroup of $\Or_2(\R)$ generated by $\sigma$; hence, it is easy to see that $\mathcal{Z}=\{\diag(a_1,a_2),a_i\in\R^*\}$. Now, observe that $\mathcal{N}_{\Gamma}$ acts trivially on $\Or_2(\R)\backslash\mathcal{C}_{\Gamma}$. Indeed, given $A\in\mathcal{C}_{\Gamma}$ and $B\in\mathcal{N}_{\Gamma}$, there exists $C\in\Or_2(\R)$ and $D\in\mathcal{Z}$ such that $A=CD$. In particular, we have $AB=CDB=CBD$ (since both $B$ and $D$ are diagonal matrices). Therefore, using the fact that $B\in\mathcal{N}_{\Gamma}\subset\Or_2(\R)$ and $C\in\Or_2(\R)$, we obtain $[D]=[CD]=[CBD]$ (where $[\cdot]$ denotes the class of a matrix in $\Or_2(\R)\backslash\mathcal{C}_{\Gamma}$). However, $[A]=[CD]$ and $[A]\cdot B=[CDB]=[CBD]$. Hence, $[A]=[A]\cdot B$, i.e. $\mathcal{N}_{\Gamma}$ acts trivially on $\Or_2(\R)\backslash\mathcal{C}_{\Gamma}$. Thus, thanks to \eqref{eq: klein}, we have a homeomorphism:
\begin{equation}\label{eq: klein 2}
\M_{0,2}(\mathbb{K}^2)\simeq\R\times\Or_2(\R)\backslash\mathcal{C}_{\Gamma}.
\end{equation}
Now, thanks to Corollary 4.9 of \cite{Bettiol-Derdzinski-Piccione_18}, $\Or_2(\R)\backslash\mathcal{C}_{\Gamma}$ is homeomorphic to the quotient space $\Or_2(\R)\cap\mathcal{Z}\backslash \mathcal{Z}$. Observe that the map $\diag(a,b)\in\mathcal{Z}\to (\lvert a \rvert,\lvert b \rvert)\in \R_{>0}\times\R_{>0}$ passes to the quotient, giving rise to a homeomorphism $[\diag(a,b)]\in\Or_2(\R)\cap\mathcal{Z}\backslash\mathcal{Z}\to(\lvert a \rvert,\lvert b \rvert)\in \R_{>0}\times\R_{>0}$. Hence, using \eqref{eq: klein 2}, we finally obtain $
\M_{0,2}(\mathbb{K}^2)\simeq\R^3$.
\end{proof}

\subsection{The M\"obius band and the cylinder}\label{section: N=2 k(X)=1}

\subsubsection{The M\"obius band}\label{sec: The Mobius band}

\begin{Proposition}\label{prop: mobius}The moduli space $\M_{0,2}(\mathbb{M}^2)$ of $\RCD(0,2)$-structures on $\mathbb{M}^2$ is homeomorphic to $\R^3$; in particular, it is contractible.
\end{Proposition}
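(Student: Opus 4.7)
The strategy parallels the Albanese/soul decomposition used for the cylinder $\mathbb{S}^1\times I$ in Proposition \ref{prop: cylinder}. First, I would invoke Theorem \ref{th: equiv homeo} to identify $\M_{0,2}(\mathbb{M}^2)$ with the equivariant moduli space $\M_{0,2}^{\mathrm{eq}}(\R^2)$. Given an equivariant structure $(\R^2,\tilde{\dist},\tilde{\m})$ and a splitting $\phi$ of it, the $k(\mathbb{M}^2)=1$ analysis in the proof of Proposition \ref{propintro: topological obstructions}, combined with Proposition \ref{prop: semola-bru-honda}, shows that the soul of $\phi$ is isomorphic to $(I,r\de,c\mathcal{H}^1)$ for a unique $(r,c)\in\R_{>0}^2$, and the generator of $\overline{\pi}_1(\mathbb{M}^2)\simeq\Z$ acts via
\begin{equation*}
\phi_*(\gamma)(\overline{x},t)=(1-\overline{x},t+a),\qquad (\overline{x},t)\in I\times\R,
\end{equation*}
for a unique $a\in\R_{>0}$. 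Crucially, the soul is automatically invariant under the midpoint reflection because $\mathcal{H}^1$ is, so the M\"obius symmetry imposes no extra constraint on $(r,c)$, and $\Gamma(\phi)$ is the infinite cyclic group generated by the translation $t\mapsto t+a$.

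Second, I would define
\begin{equation*}
\Phi\colon\M_{0,2}(\mathbb{M}^2)\to\R_{>0}^3,\qquad [\mathbb{M}^2,\dist,\m]\mapsto (r,c,a),
\end{equation*}
together with its candidate inverse $\Psi\colon\R_{>0}^3\to\M_{0,2}(\mathbb{M}^2)$ sending $(r,c,a)$ to the quotient of $(I,r\de,c\mathcal{H}^1)\times(\R,\de,\mathcal{H}^1)$ by the free isometric $\Z$-action generated by the map above; the quotient is $\RCD(0,2)$ and homeomorphic to $\mathbb{M}^2$. Well-definedness and bijectivity of $\Phi$ follow from the uniqueness statements of the first step together with Section \ref{sec3}. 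The pair $(r,c)$ is recovered from the soul $\mathcal{S}([\mathbb{M}^2,\dist,\m])$ via the homeomorphism $\M_{0,1}(I)\simeq\R_{>0}^2$ of Proposition \ref{prop9.3}, and $a$ is the length of the Albanese variety $\mathcal{A}([\mathbb{M}^2,\dist,\m])=[\R/a\Z,\dist_{a\Z}]$; hence continuity of $\Phi$ is immediate from Theorem \ref{Th: Albanese and soul continuity}.

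The main technical step is the continuity of $\Psi$. Given $(r_n,c_n,a_n)\to(r_\infty,c_\infty,a_\infty)$ in $\R_{>0}^3$, I would construct explicit equivariant mGH $\epsilon_n$-approximations between the lifts of $\Psi(r_n,c_n,a_n)$ and $\Psi(r_\infty,c_\infty,a_\infty)$ using the rescaling map
\begin{equation*}
f_n\colon(\overline{x},t)\in I\times\R\mapsto(\overline{x},\,t\,a_\infty/a_n)\in I\times\R,
\end{equation*}
paired with the identity isomorphism of $\Z$. A direct check shows that this $f_n$ conjugates the two $\Z$-actions exactly; routine estimates bound its distortion on bounded balls by a quantity that vanishes as $|r_n-r_\infty|+|a_n-a_\infty|\to 0$, while the pushforward of $c_n\mathcal{H}^1\otimes\mathcal{H}^1$ converges to $c_\infty\mathcal{H}^1\otimes\mathcal{H}^1$ in the Prokhorov sense on bounded sets as $c_n\to c_\infty$. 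Theorem \ref{th: equiv homeo} then upgrades the resulting equivariant mGH convergence on $\R^2$ to mGH convergence in $\M_{0,2}(\mathbb{M}^2)$. \textbf{The main obstacle} is preserving the orientation-reversing part of the deck action while keeping the approximation equivariant, which is precisely why the $\R$-factor must be rescaled: the identity map fails to conjugate the $\Z$-actions unless $a_n=a_\infty$. Once $\Phi$ is shown to be a homeomorphism, one concludes $\M_{0,2}(\mathbb{M}^2)\simeq\R_{>0}^3\simeq\R^3$, and contractibility follows.
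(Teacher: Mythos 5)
Your parametrization of $\M_{0,2}(\mathbb{M}^2)$ by three positive reals is essentially the one used in the paper, and your argument for the continuity of $\Phi$ via Theorem~\ref{Th: Albanese and soul continuity} matches the paper exactly. However, the step that is supposed to establish continuity of $\Psi=\Phi^{-1}$ contains a genuine gap.

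You appeal twice to Theorem~\ref{th: equiv homeo} --- first to ``identify $\M_{0,2}(\mathbb{M}^2)$ with the equivariant moduli space,'' and then to ``upgrade the resulting equivariant mGH convergence \dots to mGH convergence in $\M_{0,2}(\mathbb{M}^2)$.'' But Theorem~\ref{th: equiv homeo} only applies when the universal cover $\tilde X$ is \emph{compact}, and the universal cover of $\mathbb{M}^2$ is $I\times\R$ (not $\R^2$ as you wrote, and, more to the point, not compact). So that theorem is simply not available here; the paper invokes it only for $\R\mathbb{P}^2$ and $\mathbb{S}^2$, where the cover is $\mathbb{S}^2$. Related to this, the equivariant mGH approximations you propose to build cannot exist globally: Definition~\ref{definition: equivariant mGH distance}(i) requires a uniform distortion bound over \emph{all} pairs $x,y\in\tilde X$, and the rescaling $f_n(\overline x,t)=(\overline x,\,t\,a_\infty/a_n)$ has distortion $\lvert a_\infty/a_n-1\rvert\cdot\lvert t-s\rvert$ in the $\R$-direction, which is unbounded on $I\times\R$ unless $a_n=a_\infty$. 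Your own parenthetical remark ``on bounded balls'' signals the issue but does not resolve it, because the non-pointed $\D^{\mathrm{eq}}$ is not a localizable quantity. The paper's actual proof avoids this by working with the \emph{pointed} equivariant mGH (pmGH) topology on the noncompact cover and then invoking Theorem~A of \cite{Mondino-Navarro_22} (a pointed-quotient convergence result, distinct from the Corollary~A restated here as Theorem~\ref{th: equiv homeo}) to transfer pmGH convergence on $I\times\R$ to mGH convergence of the compact quotients. To repair your argument you would need to replace equivariant mGH with equivariant pmGH approximations based at a point, verify convergence ball by ball, and then cite the pointed quotient-convergence theorem rather than Theorem~\ref{th: equiv homeo}.
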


\begin{proof}Assume that $(\mathbb{M}^2,\dist,\m)$ is an $\RCD(0,2)$-structure on $\mathbb{M}^2$ and let $(I\times\R,\tilde{\dist},\tilde{\m})\coloneqq p^*(\mathbb{M}^2,\dist,\m)$ be the associated lift (where $p\colon I\times\R\to\mathbb{M}^2$ is the quotient map). Observe that, following the proof of Theorem \ref{prop: main} (case $k(X)=1$), there exists a splitting $\phi\colon(I\times\R,\tilde{\dist},\tilde{\m})\to([0,r],\de,a\mathcal{L}^1)\times(\R,\de,\mathcal{L}^1)$, where $a,r\in\R_{>0}$. Moreover, following the same proof, there exist a generator $\gamma$ of $\overline{\pi}_1(X)$ and $b>0$ such that, for every $(\overline{x},t)\in [0,r]\times\R$, we have $\phi_*(\gamma)(\overline{x},t)=(r-\overline{x},t+b)$. In particular, we have $[\mathbb{M}^2,\dist,\m]=[\mathbb{M}^2,\dist_{r,b},a\mathcal{H}^2]$, where $(\mathbb{M}^2,\dist_{r,b})$ is the metric quotient of $(I\times\R,r\de\times b\de)$ by the action of $\Z$ on $I\times\R$ defined by $c\cdot(\overline{x},t)\coloneqq(s^c(\overline{x}),t+c)$, for every $c\in\Z$ (where $s\colon \overline{x}\in[0,1]\to 1-\overline{x}\in[0,1]$). Furthermore, it is easily seen from the definitions that $\mathcal{A}([\mathbb{M}^2,\dist,\m])=(\R,\de)/b\Z$ and $\mathcal{S}([\mathbb{M}^2,\dist,\m])=[I,r\de,a\mathcal{H}^1]$ (where the Albanese variety and the soul are defined respectively in \eqref{eq: albanese variety} and \eqref{eq: soul}). In particular, we have $a=\Mass(\mathcal{S}([\mathbb{M}^2,\dist,\m]))/\Diam(\mathcal{S}([\mathbb{M}^2,\dist,\m]))$, $r=\Diam(\mathcal{S}([\mathbb{M}^2,\dist,\m]))$, and $b=2\Diam(\mathcal{A}([\mathbb{M}^2,\dist,\m]))$. Therefore, the map $\Phi\colon a,b,r\in(\R_{>0})^3\to[\mathbb{M}^2,\dist_{r,b},a\mathcal{H}^2]\in\M_{0,2}(\mathbb{M}^2)$
is invertible and its inverse satisfies:
$$
\Phi^{-1}([\mathbb{M}^2,\dist,\m])=\Big(\frac{\Mass(\mathcal{S}([\mathbb{M}^2,\dist,\m]))}{\Diam(\mathcal{S}([\mathbb{M}^2,\dist,\m]))},2\Diam(\mathcal{A}([\mathbb{M}^2,\dist,\m])),\Diam(\mathcal{S}([\mathbb{M}^2,\dist,\m]))\Big).
$$
Now, observe that $\Phi$ and $\Phi^{-1}$ are continuous. Indeed, notice that, according to Theorem \ref{Th: Albanese and soul continuity}, $\Phi^{-1}$ is continuous. We then assume that $(a_k,b_k,r_k)\to(a_{\infty},b_{\infty},r_{\infty})$. It is then readily checked that the sequence $\{(I\times\R,r_k\de\times b_k\de,a_k\mathcal{H}^2,0)\}$ converges to $(I\times\R,r_{\infty}\de\times b_{\infty}\de,a_{\infty}\mathcal{H}^2,0)$ in the equivariant pmGH topology (see Definition 2.8 of \cite{Mondino-Navarro_22}) w.r.t. the action of $\Z$ on $I\times\R$ we introduced. Hence, thanks to Theorem A of \cite{Mondino-Navarro_22}, the sequence of associated quotients $\{(\mathbb{M}^2,\dist_{r_k,b_k},a_k\mathcal{H}^2)\}$ converges to $(\mathbb{M}^2,\dist_{r_{\infty},b_{\infty}},a_{\infty}\mathcal{H}^2)$ in the mGH topology. Thus, $\Phi$ is continuous, which concludes the proof.
\end{proof}

\subsubsection{The cylinder}

\begin{Proposition}\label{prop: cylinder}The moduli space $\M_{0,2}(\mathbb{S}^1\times I)$ of $\RCD(0,2)$-structures on $\mathbb{S}^1\times I$ is homeomorphic to $\R^3$; in particular, it is contractible.
\end{Proposition}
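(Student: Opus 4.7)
The plan is to mirror the argument of Proposition \ref{prop: mobius} almost verbatim, with the $\Z$-action on the universal cover acting trivially on the soul factor instead of by reflection. Given any $(\mathbb{S}^1\times I,\dist,\m)\in\M_{0,2}(\mathbb{S}^1\times I)$, I would first pass to its lift $(I\times\R,\tilde{\dist},\tilde{\m})\coloneqq p^*(\mathbb{S}^1\times I,\dist,\m)$ along the universal cover $p\colon I\times\R\to\mathbb{S}^1\times I$. Following the case $k(X)=1$ with $\Image(\rho_S^\phi)=0$ in the proof of Proposition \ref{propintro: topological obstructions}, there exists a splitting
\[
\phi\colon(I\times\R,\tilde{\dist},\tilde{\m})\to([0,r],\de,a\mathcal{H}^1)\times(\R,\de,\mathcal{L}^1)
\]
for some $a,r>0$, together with a generator $\gamma$ of $\overline{\pi}_1(\mathbb{S}^1\times I)\simeq\Z$ and $b>0$ such that $\phi_*(\gamma)(\overline{x},t)=(\overline{x},t+b)$ for every $(\overline{x},t)\in[0,r]\times\R$.

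Next, I would encode this data by a bijective assignment: write $(\mathbb{S}^1\times I,\dist_{r,b})$ for the metric quotient of $(I\times\R,r\de\times b\de)$ by the $\Z$-action $1\cdot(\overline{x},t)\coloneqq(\overline{x},t+1)$. Then $[\mathbb{S}^1\times I,\dist,\m]=[\mathbb{S}^1\times I,\dist_{r,b},a\mathcal{H}^2]$, and a direct computation from \eqref{eq: albanese variety} and \eqref{eq: soul} yields $\mathcal{A}([\mathbb{S}^1\times I,\dist,\m])=[\R/b\Z,\de]$ and $\mathcal{S}([\mathbb{S}^1\times I,\dist,\m])=[I,r\de,a\mathcal{H}^1]$. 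Therefore the map $\Phi\colon(a,b,r)\in(\R_{>0})^3\to[\mathbb{S}^1\times I,\dist_{r,b},a\mathcal{H}^2]\in\M_{0,2}(\mathbb{S}^1\times I)$ is bijective, with inverse
\[
\Phi^{-1}([\mathbb{S}^1\times I,\dist,\m])=\Big(\tfrac{\Mass(\mathcal{S}([\mathbb{S}^1\times I,\dist,\m]))}{\Diam(\mathcal{S}([\mathbb{S}^1\times I,\dist,\m]))},\,2\Diam(\mathcal{A}([\mathbb{S}^1\times I,\dist,\m])),\,\Diam(\mathcal{S}([\mathbb{S}^1\times I,\dist,\m]))\Big).
\]

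Finally, continuity of $\Phi^{-1}$ is immediate from Theorem \ref{Th: Albanese and soul continuity} together with the continuity of $\Mass$ and $\Diam$ under mGH convergence. For continuity of $\Phi$, given $(a_k,b_k,r_k)\to(a_\infty,b_\infty,r_\infty)$ in $(\R_{>0})^3$, one checks that $(I\times\R,r_k\de\times b_k\de,a_k\mathcal{H}^2,0)$ converges to $(I\times\R,r_\infty\de\times b_\infty\de,a_\infty\mathcal{H}^2,0)$ in the equivariant pmGH topology with respect to the $\Z$-action described above; Theorem A of \cite{Mondino-Navarro_22} then transports this convergence to the quotients, giving $[\mathbb{S}^1\times I,\dist_{r_k,b_k},a_k\mathcal{H}^2]\to[\mathbb{S}^1\times I,\dist_{r_\infty,b_\infty},a_\infty\mathcal{H}^2]$ in the mGH topology. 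This establishes the homeomorphism $\M_{0,2}(\mathbb{S}^1\times I)\simeq\R^3$ and, in particular, contractibility. There is no genuine obstacle here beyond what has already been handled in the Möbius band case: the trivial soul action replaces the reflection, which actually streamlines the bookkeeping rather than complicating it.
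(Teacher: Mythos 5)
Your proof is correct and follows precisely the route the paper takes: Proposition \ref{prop: cylinder} is proved in the paper by the one-line remark ``proceeding precisely as in section \ref{sec: The Mobius band},'' and your argument is exactly that Möbius band proof transcribed with the trivial soul action in place of the reflection, yielding the same $\Phi$, the same inverse via Albanese and soul, and the same continuity arguments from Theorem \ref{Th: Albanese and soul continuity} and Theorem A of \cite{Mondino-Navarro_22}.
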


\begin{proof}Proceeding precisely as in section \ref{sec: The Mobius band}, it is readily checked that the map $\Phi\colon a,b,r\in(\R_{>0})^3\to[\mathbb{S}^1\times I,\dist_{r,b},a\mathcal{H}^2]\in\M_{0,2}(\mathbb{S}^1\times I)$ is a homeomorphism, where $d_{r,b}=b\dist_{\mathbb{S}^1}\times r\de$ and $\dist_{\mathbb{S}^1}$ is the length metric on the circle with perimeter $1$. Therefore, the result follows.
\end{proof}

\subsection{The 2-sphere, the projective plane, and the closed disc}\label{sec: N=2 k(X)=0}

In this section, we will compute the moduli spaces of $\RCD(0,2)$-structures on the $2$-sphere $\mathbb{S}^2$, the projective plane $\R\mathbb{P}^2$, and the closed disc $\mathbb{D}$. As we will see later, these moduli spaces are all homeomorphic to specific spaces of convex compact subsets of $\R^3$. We will start by introducing some notations of convex geometry. We will then prove realisation results for nonnegatively curved metrics on $\R\mathbb{P}^2$ and $\mathbb{D}$. We will then introduce convergence lemmas that will be fundamental to prove continuity statements. Finally, we will compute the aforementioned moduli spaces.

\subsubsection{Notations}

We start by introducing some notations from \cite{Belegradek_18}.

\begin{Notation}[Spaces of convex compact subsets of $\R^3$]\label{not: Spaces of convex compacta}
We denote $\mathcal{K}$ the set of all compact convex subsets of $\R^3$ and $\mathcal{K}^s$ the subset of $\mathcal{K}$ whose elements have their Steiner point at the origin (see section 4 of \cite{Belegradek_II_18} for some properties of the Steiner point).\\
Given $0\leq k\leq l\leq 3$ and $\mathcal{K}'\subset\mathcal{K}$, we denote:
\begin{itemize}
\item $\mathcal{K}'_{k\leq l}\coloneqq\{D\in\mathcal{K}',\dim(D)\in[k,l]\}$,
\item $\tilde{\mathcal{K}}'\coloneqq\{D\in\mathcal{K}',D=-D\}$.
\end{itemize}
Every subspace of $\mathcal{K}$ will be endowed with the Hausdorff distance $\dha$. Also, we will generically denote $B$, $K$, $L$ and $\{*\}$ elements of $\mathcal{K}$ with dimension $3$, $2$, $1$ and $0$.\end{Notation}

\begin{Remark}Observe that if $D\in\tilde{\mathcal{K}}$, then $s(D)=s(-D)=-s(D)$ (where $s(D)$ is the steiner point of $D$), i.e. $s(D)=0$. In particular, we have $\tilde{\mathcal{K}}\subset\mathcal{K}^s$.
\end{Remark}

The following maps will be important later when comparing the boundaries of two different convex bodies in $\mathcal{K}^s$.

\begin{Notation}[Central projection]\label{not: central projection}Let $n\geq1$ and assume that $D$ is an $n$-dimensional compact convex subset of $\R^n$ whose Steiner point is at the origin. Given $x\in\R^n\backslash\{0\}$, the open half line $\R_{>0}\cdot x$ intersects $\partial D$ in a single point, which we denote $p_{\partial D}^{\mathrm{c}}(x)$. The map $p_{\partial D}^{\mathrm{c}}\colon\R^n\backslash\{0\}\to\partial D$ is called the \emph{central projection on $\partial D$}.
\end{Notation}

A classical result is that the orthogonal projection on a closed convex subspace of a Hilbert space is well defined.

\begin{Notation}[Orthogonal projection]\label{not: orthogonal projection}Let $n\geq 1$ and assume that $D$ is a closed convex subset of $\R^n$. Given $x\in\R^n$, there exists a unique point $p_D(x)\in D$ such that $\de(x,p_D(x))=\de(x,D)$. The map $p_D\colon \R^n\to D$ is called the \emph{orthogonal projection on $D$}. Given $\alpha\in\R^n\backslash\{0\}$, we denote $p_{\alpha}\coloneqq p_{\R\alpha}$, and $p^{\perp}_{\alpha}\coloneqq p_{\{\alpha\}^{\perp}}$.
\end{Notation}

Let us now clarify what we mean when we speak about the boundary and the interior of a set.

\begin{Notation}[Boundary and interior]Let $M\subset \R^n$ ($n\geq 1$) be a topological submanifold with boundary. We denote $\partial M$ the boundary of $M$ and $\mathring{M}\coloneqq M\backslash\partial M$ the interior of $M$ (we will also write $\mathrm{int}(M)$).\end{Notation}
Every Lipschitz submanifold of $\R^n$ ($n\geq1$) admits two canonical metrics, namely the extrinsic and intrinsic metrics.

\begin{Notation}[Intrinsic and extrinsic metrics]
Given $n\in\N$ and given a connected Lipschitz submanifold $X\subset \R^n$ (possibly with boundary), we denote $\dist_X$ the \emph{intrinsic metric of $X$}. More precisely, given $x,y\in X$, we have $\dist_X(x,y)=\inf\{\mathcal{L}(\gamma)\}$, where the infimum is computed over the set of rectifiable curves in $X$ joining $x$ to $y$.\\
The \emph{extrinsic metric} on $X$ is simply the restriction of the Euclidean distance $\de$ to $X\subset \R^n$.\\
We will usually only write $X$ to speak about the metric space $(X,\dist_X)$, and we will specify when we endow $X$ with its extrinsic distance $\de$. For example, given $B\in\mathcal{K}^s$ of dimension $3$, will usually write $\partial B$ to speak about the metric space $(\partial B,\dist_{\partial B})$.
\end{Notation}

Let us introduce the double of a metric space. This notion will be crucial to realise metrics on the $2$-sphere, the projective plane and the disc.

\begin{Notation}[Double of a metric space]\label{not: double of K}
Given a topological manifold $X$ with boundary, we denote:
$$
\mathcal{D}X\coloneqq \sqcup_{\mathbf{i}=\mathbf{1},\mathbf{2}}\{\mathbf{i}\}\times X/\sim,
$$
where $(\mathbf{1},x)\sim(\mathbf{2},x)$, whenever $x\in\partial X$. We call $\mathcal{D}X$ the \emph{double of $X$} and let $q$ be the quotient map.\\
Given $\mathbf{i}\in\{\mathbf{1},\mathbf{2}\}$, we define $X^{\mathbf{i}}$ (resp. $\mathring{X}^{\mathbf{i}}$) as the image of $\{\mathbf{i}\}\times X$ (resp. $\{\mathbf{i}\}\times \mathring{X}$) under $q$. Since $\{\mathbf{1}\}\times\partial X$ and $\{\mathbf{2}\}\times \partial X$ are identified when passing to the quotient, we also denote $\partial X$ the image of these by $q$. Given $x\in X$, we will then write $x^{\mathbf{i}}\coloneqq q(\mathbf{i},x)\in X^{\mathbf{i}}$.\\
Given a length metric $\dist$ on $X$, there exists a unique length metric $\mathcal{D}\dist$ on $\mathcal{D}X$ whose restriction to $X^\mathbf{1}$ and $X^{\mathbf{2}}$ coincides with $\dist$. More precisely, given $x,y\in X$, we define $\mathcal{D}\dist(x^{\mathbf{1}},y^{\mathbf{1}})\coloneqq\dist(x,y)\eqqcolon\mathcal{D}\dist(x^{\mathbf{2}},y^{\mathbf{2}})$ and:
$$
\mathcal{D}\dist(x^{\mathbf{1}},y^{\mathbf{2}})\coloneqq\inf\{\dist(x,z)+\dist(z,y),z\in\partial X\}.
$$
We denote $\mathcal{D}(X,\dist)\coloneqq(\mathcal{D}X,\mathcal{D}\dist)$.\\
Very often, there will be no confusion about the metric $X$ is endowed with; therefore, we usually only write $\mathcal{D}X$ instead of $\mathcal{D}(X,\dist)$. For example, given $K\in\mathcal{K}$ of dimension $2$, we will write $\DK$ to speak about the metric space $\mathcal{D}(K,\de)$ (note that since $K$ is convex, the intrinsic metric $\dist_K$ coincides with the restriction of $\de$ to $K$).
\end{Notation}

The following maps will be relevant when studying double of metric spaces.

\begin{Notation}\label{not: inversion}Assume that $K$ is a $2$-dimensional compact convex subset of $\R^2$. We denote $s_K\colon\DK\to\DK$ the isometry defined by $s(x^{\mathbf{1}})\coloneqq x^{\mathbf{2}}$ and $s(x^{\mathbf{2}})\coloneqq x^{\mathbf{1}}$, for $x\in K$.
\end{Notation}

\begin{Notation}\label{not: double iso}Assume that $(X,\dist_X)$ and $(Y,\dist_Y)$ are metric spaces homeomorphic to topological manifolds with boundary and assume that $\phi\colon(X,\dist_X)\to(Y,\dist_Y)$ is an isometry (in particular $\phi(\partial X)=\partial Y$). We denote $\phi_{\mathcal{D}}\colon\mathcal{D}(X,\dist_X)\to\mathcal{D}(Y,\dist_Y)$ the isometry defined by $\phi_{\mathcal{D}}(x^{\mathbf{i}})\coloneqq(\phi(x))^{\mathbf{i}}$, for $x\in X$ and $\mathbf{i}\in\{\mathbf{1},\mathbf{2}\}$.
\end{Notation}

\subsubsection{Realisation of nonnegatively curved metrics}

In \cite{Belegradek_18}, Belegradek introduces a homeomorphism between the quotient space $\mathcal{K}^s_{2\leq3}/\Or_3(\R)$ and the moduli space $\mathscr{M}_{\mathrm{curv}\geq0}(\mathbb{S}^2)$ of nonnegatively curved metrics on $\mathbb{S}^2$. Let us first recall the correspondence.

\begin{Notation}\label{not: phi S2}Given $D\in\mathcal{K}^s$, we denote $\Phi_{\mathbb{S}^2}(D)\coloneqq \partial D$ if $\dim(D)=3$, $\Phi_{\mathbb{S}^2}(D)\coloneqq \mathcal{D}D$ if $\dim(D)=2$, and $\Phi_{\mathbb{S}^2}(D)\coloneqq D$ if $\dim(D)\in\{0,1\}$. In each case, $\Phi_{\mathbb{S}^2}(D)$ is endowed with its natural length metric.
\end{Notation}

Notice that given $D\in\Ks$, the isometry class $[\Phi_{\mathbb{S}^2}(D)]$ belongs to $\mathscr{M}_{\mathrm{curv}\geq0}(\mathbb{S}^2)$. Moreover, given $\phi\in\Or_3(\R)$, we have $[\Phi_{\mathbb{S}^2}(D)]=[\Phi_{\mathbb{S}^2}(\phi(D))]$. Therefore, there exists a unique map:
\begin{equation}\label{eq: homeo S2}
\Psi_{\mathbb{S}^2}\colon\mathcal{K}^s_{2\leq3}/\Or_3(\R)\to\mathscr{M}_{\mathrm{curv}\geq0}(\mathbb{S}^2)
\end{equation}
such that, for every $D\in\mathcal{K}^s_{2\leq3}$, we have $\Psi_{\mathbb{S}^2}([D])=[{\Phi}_{\mathbb{S}^2}(D)]$.\\

The following result is inspired by the realisation Theorem of Alexandrov (see Theorem 1 page 237 in \cite{Kutateladze_05}) and is proven by Belegradek in section 2 of \cite{Belegradek_18}.

\begin{Theorem}\label{th: homeo S2}The map $\Psi_{\mathbb{S}^2}\colon\mathcal{K}^s_{2\leq3}/\Or_3(\R)\to\mathscr{M}_{\mathrm{curv}\geq0}(\mathbb{S}^2)$ introduced in \eqref{eq: homeo S2} is a homeomorphism.
\end{Theorem}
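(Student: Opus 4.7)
The plan is to verify successively that $\Psi_{\mathbb{S}^2}$ is well-defined, bijective, and a homeomorphism. First I would check that for $D\in\mathcal{K}^s_{2\leq3}$ the space $\Phi_{\mathbb{S}^2}(D)$ is a topological $2$-sphere endowed with a nonnegatively curved metric in the sense of Alexandrov: when $\dim(D)=3$, this is the classical statement that the intrinsic boundary of a $3$-dimensional convex body is an Alexandrov surface with $\mathrm{curv}\geq0$; when $\dim(D)=2$, one checks that the double $\mathcal{D}D$ of a planar convex body along its boundary is again an Alexandrov sphere with nonnegative curvature, the curvature concentrating along $\partial D$ and being nonnegative thanks to the convexity of $D$. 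Invariance under $\Or_3(\R)$ is automatic since any $\phi\in\Or_3(\R)$ lifts canonically to an isometry $\Phi_{\mathbb{S}^2}(D)\to\Phi_{\mathbb{S}^2}(\phi(D))$, so $\Psi_{\mathbb{S}^2}$ descends to the quotient. Surjectivity then follows from Alexandrov's realization theorem as stated in \cite{Kutateladze_05}, which asserts that every nonnegatively curved metric on $\mathbb{S}^2$ is isometric either to the intrinsic boundary of a $3$-dimensional compact convex body, or to the double of a $2$-dimensional one; translating a realizer so that its Steiner point becomes the origin places it in $\mathcal{K}^s_{2\leq3}$.

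For injectivity, assume that $\Phi_{\mathbb{S}^2}(D_1)$ is isometric to $\Phi_{\mathbb{S}^2}(D_2)$ with $D_1,D_2\in\mathcal{K}^s_{2\leq3}$. The rigidity half of Alexandrov's theorem (the Alexandrov--Pogorelov rigidity theorem for convex surfaces, cf. \cite{Kutateladze_05}) provides a rigid motion $T$ of $\R^3$ with $T(D_1)=D_2$: in the $(3,3)$ case this is classical rigidity of closed convex surfaces; if $\dim(D_1)\neq\dim(D_2)$ then comparing singular sets in both Alexandrov surfaces forces a contradiction (the singular locus of $\mathcal{D}D$ is a closed curve, whereas $\partial B$ has no such curve of concentrated curvature for a generic $B$, and a limiting argument handles the non-generic case); in the $(2,2)$ case, an isometry $\mathcal{D}D_1\to\mathcal{D}D_2$ must preserve the edge $\partial D_i$, hence descends to a planar isometry $D_1\to D_2$ that extends to a rigid motion of $\R^3$. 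Since the Steiner point is equivariant under rigid motions and $s(D_1)=s(D_2)=0$, the translation part of $T$ vanishes, so $T\in\Or_3(\R)$. Hence $[D_1]=[D_2]$ in $\mathcal{K}^s_{2\leq3}/\Or_3(\R)$.

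The main obstacle is establishing continuity in both directions, since the dimension of $D$ can drop in the limit. For forward continuity, fix $[D_n]\to[D_\infty]$ in the quotient and pick representatives $D_n\to D_\infty$ in Hausdorff distance (possible by compactness of $\Or_3(\R)$ combined with uniform diameter bounds along convergent sequences). One then splits into three cases according to the pair $(\dim D_n,\dim D_\infty)$. The cases $(3,3)$ and $(2,2)$ are manageable: in $(3,3)$ the central projection $p_{\partial D_\infty}^{\mathrm{c}}$ of Notation \ref{not: central projection}, restricted to $\partial D_n$, gives a bi-Lipschitz correspondence whose distortion is controlled by $\dha(D_n,D_\infty)$; in $(2,2)$ one shows directly that the doubling construction is continuous in Hausdorff distance. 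The delicate case is the dimension drop $(3,2)$: here $D_n\to K=D_\infty$ with $K$ planar, and one must identify the two spherical caps of $\partial D_n$ cut out by any limiting support hyperplane of $K$ with the two sheets $K^{\mathbf{1}},K^{\mathbf{2}}$ of $\mathcal{D}K$, then quantify the GH distortion of the resulting map using the fact that the thin slab containing $D_n$ collapses onto $K$, with the vertical scale vanishing as $n\to\infty$. Finally, continuity of the inverse is a short compactness argument: if $\Psi_{\mathbb{S}^2}([D_n])\to\Psi_{\mathbb{S}^2}([D_\infty])$ but $[D_n]\not\to[D_\infty]$, Blaschke's selection theorem (using diameter bounds inherited from GH convergence of the images) extracts a subsequence converging to some $D$ with $[D]\neq[D_\infty]$, and applying forward continuity to this subsequence combined with injectivity produces the required contradiction.
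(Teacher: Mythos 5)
The paper does not actually prove this theorem: immediately after stating it, the text says it ``is proven by Belegradek in section 2 of \cite{Belegradek_18}'' and simply cites the result. So your proposal is a reconstruction where the paper gives only a reference. With that caveat, your overall scheme (well-definedness from the classical Alexandrov characterization of nonnegatively curved $2$-spheres, surjectivity from Alexandrov's realization theorem, injectivity from rigidity, forward continuity split into the three cases $(3,3)$, $(2,2)$, $(3,2)$ with central/orthogonal-projection comparison maps, inverse continuity by a Blaschke-plus-injectivity compactness argument) is consistent with both Belegradek's treatment and with how the paper handles the analogous equivariant and disc versions later (see Lemma~\ref{lem: 3 to 3}, Lemma~\ref{lem: 3 to 2}, Lemma~\ref{lem: 2 to 2}, Proposition~\ref{prop: RP2 continuity}, and Proposition~\ref{prop: disc continuity}).

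There is, however, one genuine soft spot in your injectivity step, in the dimension-mismatch case $\dim D_1=3$, $\dim D_2=2$. You argue that $\mathcal{D}D_2$ has its singular set concentrated on a closed curve while ``$\partial B$ has no such curve of concentrated curvature for a generic $B$, and a limiting argument handles the non-generic case.'' This claim as stated is false as a dichotomy: a full-dimensional convex body $B$ can very well carry curvature concentrated along a closed curve (for instance, a smooth ``lens'' body $B$ obtained as the intersection of two round balls has its curvature measure supported on the boundary circle and on the two smooth caps, and a flat ``hockey-puck'' body has curvature supported only on its two edge circles), so genericity does not isolate the $3$-dimensional case, and the unexplained ``limiting argument'' is doing all the work. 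The clean way to close this is not to separate cases at all: the Alexandrov--Pogorelov rigidity theorem (in the form used by Alexandrov, covering degenerate as well as nondegenerate convex bodies, with the degenerate case due to Olovyanishnikov) already gives a rigid motion $T$ with $T(D_1)=D_2$ for \emph{any} pair of isometric boundaries $\Phi_{\mathbb{S}^2}(D_1)$, $\Phi_{\mathbb{S}^2}(D_2)$, and in particular it excludes the dimension-mismatch situation directly. Once you have $T(D_1)=D_2$, the Steiner-point normalization forces $T\in\Or_3(\R)$ as you say. I would drop the singular-set discussion entirely and state the rigidity theorem in this generality; otherwise the argument is sound.
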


We are now going to show that the equivariant nonnegatively curved metrics on $\mathbb{S}^2$ are in correspondence with symmetric convex compact subsets of $\R^3$ with dimension between $2$ and $3$. In what follows, by equivariant, we always mean equivariant with respect to the action of $\{\pm1\}$ on the relevant spaces.

\begin{Remark}Note that there might be various actions of $\{\pm1\}$ on our spaces. Given $B\in\tK$ of dimension $3$, we let $-1$ act as $-\id_{\R^3}$ on $\partial B$. Given $K\in\tK$ of dimension $2$, we let $-1$ act on $\DK$ in the following way: for $x\in K$, $-1\cdot x^{\mathbf{1}}\coloneqq(-x)^{\mathbf{2}}$ and $-1\cdot x^{\mathbf{2}}\coloneqq(-x)^{\mathbf{1}}$.
\end{Remark}

First, note that if $D\in\tK$, it is clear that there exists an equivariant nonnegatively curved metric $\dist$ on $\mathbb{S}^2$ such that $(\mathbb{S}^2,\dist)$ is equivariantly isometric to $\Phi_{\mathbb{S}^2}(D)$; hence, $[\Phi_{\mathbb{S}^2}(D)]\in\mathscr{M}^{\mathrm{eq}}_{\mathrm{curv\geq0}}(\mathbb{S}^2)$. In addition, if $\phi\in\Or_3(\R)$, then $\Phi_{\mathbb{S}^2}(D)$ and $\Phi_{\mathbb{S}^2}(\phi(D))$ are equivariantly isometric. Therefore, there exists a unique map:
\begin{equation}\label{eq: homeo S2 eq}
\Psi_{\mathbb{S}^2}^{\mathrm{eq}}\colon\tK/\Or_3(\R)\to\mathscr{M}^{\mathrm{eq}}_{\mathrm{curv\geq0}}(\mathbb{S}^2)
\end{equation}
such that, for every $D\in\tK$, we have $\Psi_{\mathbb{S}^2}^{\mathrm{eq}}([D])=[\Phi_{\mathbb{S}^2}(D)]$. We are going to prove the following realisation theorem for equivariant metrics on the $2$-sphere.

\begin{Proposition}\label{prop: realisation RP2}The map $\Psi_{\mathbb{S}^2}^{\mathrm{eq}}\colon\tK/\Or_3(\R)\to\mathscr{M}^{\mathrm{eq}}_{\mathrm{curv\geq0}}(\mathbb{S}^2)$ introduced in \eqref{eq: homeo S2 eq} is a $1$-$1$ correspondence.
\end{Proposition}

To prove Proposition \ref{prop: realisation RP2}, we will need the following Lemma.

\begin{Lemma}\label{lem: involutive isometry}Let $K\subset\R^2$ and $\Sigma\subset\R^2$ be $2$-dimensional compact convex subsets of $\R^2$. If $\phi\colon\mathcal{D}K\to\mathcal{D}\Sigma$ is an isometry, then $\phi(\partial K)=\partial\Sigma$.
\end{Lemma}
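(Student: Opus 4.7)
The plan is to characterize $\partial K$ as an intrinsic metric invariant of $\mathcal{D}K$ and conclude that it is mapped to $\partial \Sigma$ by $\phi$. I would first observe that $\mathcal{D}K$ is a $2$-dimensional nonnegatively curved Alexandrov space homeomorphic to $\mathbb{S}^2$ (by Reshetnyak gluing of the two flat convex copies $K^{\mathbf{1}}, K^{\mathbf{2}} \cong K$ along their isometric boundaries), and similarly for $\mathcal{D}\Sigma$. The natural swap $s_K$ is an isometric involution with $\mathrm{Fix}(s_K) = \partial K$, and the induced involution $\tau := \phi^{-1} \circ s_\Sigma \circ \phi$ on $\mathcal{D}K$ is isometric with fixed set $\phi^{-1}(\partial\Sigma)$; the goal thus reduces to showing $\mathrm{Fix}(\tau) = \partial K$.

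Next, I would analyze the local Alexandrov structure of $\mathcal{D}K$. At each $x \in \mathcal{D}K$, the tangent cone is flat $\R^2$ except at corners of $\partial K$ (points of interior angle $\theta < \pi$), where it is a flat cone of total angle $2\theta < 2\pi$; since $\phi$ preserves tangent cones, it identifies corners of $\partial K$ with corners of $\partial \Sigma$. Moreover, the canonical Alexandrov curvature measure $\omega$ on $\mathcal{D}K$ is preserved by $\phi$; its singular support lies inside $\partial K$, consisting of atoms of mass $2\pi - 2\theta_i$ at the corners together with a $1$-dimensional density $2\kappa_K$ along the strictly curved arcs of $\partial K$. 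Thus $\phi$ carries the singular support of $\omega$ in $\mathcal{D}K$ onto the analogous subset of $\partial\Sigma$.

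The main obstacle is extending this identification to the straight (flat) segments of $\partial K$, which carry no curvature and are invisible to the local Alexandrov structure. To handle them, I would use that each connected component of $\mathcal{D}K \setminus \mathrm{Fix}(\tau)$ is isometric to the $2$-dim open convex region $\mathring{\Sigma} \subset \R^2$ (since the halves of $\mathcal{D}\Sigma$ under $s_\Sigma$ are), yielding a second decomposition of $\mathcal{D}K$ as a double of a $2$-dim flat convex body along $\mathrm{Fix}(\tau)$. Matching the total areas, total curvatures, and configurations of cone-singular and curved features of this decomposition against the original decomposition $\mathcal{D}K = K^{\mathbf{1}} \cup_{\partial K} K^{\mathbf{2}}$ forces the two gluing curves to coincide, so $\mathrm{Fix}(\tau) = \partial K$ and hence $\phi(\partial K) = \partial \Sigma$. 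The essential technical point, which I expect to be the main obstacle, is precisely the uniqueness of the decomposition of $\mathcal{D}K$ as a double of a $2$-dim flat convex subset of $\R^2$, which is the geometric content of this Lemma.
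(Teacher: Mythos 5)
Your reduction to showing $\mathrm{Fix}(\tau)=\partial K$ where $\tau=\phi^{-1}\circ s_\Sigma\circ\phi$ is a clean reformulation, and the observation that $\phi$ must preserve the curvature measure (atoms at corners, densities along strictly convex arcs) is correct and does show that the non-flat part of $\partial K$ lands in $\partial\Sigma$. But the decisive step is missing. You correctly identify that the flat segments of $\partial K$ are the real difficulty, and then propose to resolve it by claiming that ``matching the total areas, total curvatures, and configurations of cone-singular and curved features \dots forces the two gluing curves to coincide.'' This is not an argument: areas are automatically equal ($2\,\mathrm{Area}(K)=\mathrm{Area}(\mathcal{D}K)=2\,\mathrm{Area}(\Sigma)$), total curvature is always $4\pi$ by Gauss--Bonnet, and the configuration of singular data is tautologically preserved by any isometry. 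None of these invariants sees the flat segments, which is precisely where the two hypothetical gluing curves could differ. You then concede that ``uniqueness of the decomposition of $\mathcal{D}K$ as a double of a $2$-dim flat convex subset'' is ``the geometric content of this Lemma'' and is ``the main obstacle'' --- in other words, the last step of your proof is the Lemma restated, so the argument is circular and the Lemma is not established.

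For comparison, the paper's proof runs by contradiction and never invokes curvature invariants. Assuming $\partial K\cap\phi^{-1}(\mathring{\Sigma}^{\mathbf{2}})\neq\varnothing$, it notes that any two points of this set admit a \emph{unique} shortest path in $\mathcal{D}K$ (pulled back from the flat interior $\mathring{\Sigma}^{\mathbf{2}}$), and that a unique geodesic between two points of $\partial K$ must lie in $\partial K$ by $s_K$-symmetry; a three-point alignment argument then shows $\partial K\cap\phi^{-1}(\mathring{\Sigma}^{\mathbf{2}})$ is a single open arc $(xy)$ with endpoints $x,y$ landing on $\partial\Sigma$, so the unique geodesic $[xy]\subset\partial K$ maps into $\partial\Sigma$, contradicting $\phi((xy))\subset\mathring{\Sigma}^{\mathbf{2}}$. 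That argument is elementary, handles flat and curved arcs uniformly, and in particular succeeds exactly where your curvature-measure route stalls. If you want to rescue your approach, you would need an actual proof of the uniqueness of the double decomposition (e.g.\ showing that the gluing curve is characterized as the fixed locus of every isometric involution swapping the two flat components, or as the cut locus of some canonically defined set); merely listing matching invariants does not do it.
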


\begin{proof}Looking for a contradiction, let us assume that $\partial K\cap\phi^{-1}(\mathring{\Sigma}^{\mathbf{2}})\neq\varnothing$.\\
First of all, observe that if $x,y\in\partial K\cap\phi^{-1}(\mathring{\Sigma}^{\mathbf{2}})$, then there exists a unique unit speed shortest path $[\phi(x)\phi(y)]$ from $\phi(x)$ to $\phi(y)$. In particular, there exists a unique unit speed shortest path $[xy]$ from $x$ to $y$. In addition, since $s_K$ is an isometry of $\DK$ (see Notation \ref{not: inversion}), then $s_K([xy])=[s_K(x)s_K(y)]$ is the unique unit speed shortest path from $s_K(x)$ to $s_K(y)$. However, since $x,y\in\partial K$, then we have $s_K(x)=x$ and $s_K(y)=y$, which implies $s_K([xy])=[xy]$. Therefore, for every $z\in[xy]$, we have $z\in\Fix(s_K)=\partial K$. Hence, we necessarily have $[xy]\subset\partial K$. Moreover, observe that $[\phi(x)\phi(y)]\subset\mathring{\Sigma}^{\mathbf{2}}$ and that $\phi([xy])=[\phi(x)\phi(y)]$; therefore, $[xy]\subset\partial K\cap\phi^{-1}(\mathring{\Sigma}^{\mathbf{2}})$.\\
After that, observe that if $x,y,z \in\partial K\cap\phi^{-1}(\mathring{\Sigma}^{\mathbf{2}})$, then $x,y,z$ are necessarily aligned. Indeed, if not, then the triangle $\Delta(x,y,z)$ is homeomorphic to $\mathbb{S}^1$ and is a subset of $\partial K\cap\phi^{-1}(\mathring{\Sigma}^{\mathbf{2}})$ (using the first part of the proof), which implies $\Delta(x,y,z)=\partial K$. In particular, thanks to the first observation, every pair of points on $\partial K$ can be joined by a unique unit speed shortest path, which is a contradiction.\\
Using the two observations above, there exist $x\neq y\in\partial K$ such that $(xy)=\partial K\cap\phi^{-1}(\mathring{\Sigma}^{\mathbf{2}})$, where $(xy)$ is the interior of $[xy]$ seen as a subspace of $\partial K\subset\R^2$. Note that we necessarily have:
\begin{itemize}
\item[(i)] $[xy]\subset\partial K$,
\item[(ii)] $\phi(x),\phi(y)\in\partial \Sigma$.
\end{itemize}
In particular, there exists a unique unit speed shortest path from $x$ to $y$ in $\mathcal{D}K$ (using (i)). Therefore, there is a unique unit speed shortest path from $\phi(x)$ to $\phi(y)$, which implies that $\phi([xy])=[\phi(x)\phi(y)]\subset\partial \Sigma$ (using (ii)). However, let us recall that by assumption $(xy)=\partial K\cap\phi^{-1}(\mathring{\Sigma}^{\mathbf{2}})$. Hence, $\phi((xy))\subset\partial \Sigma\cap\mathring{\Sigma}^{\mathbf{2}}=\varnothing$, which is the contradiction we were seeking.\\
The same argument leads to $\partial K\cap\phi^{-1}(\mathring{\Sigma}^\mathbf{1})=\varnothing$.\\
Now, we have shown that $\phi(\partial K)\subset\partial\Sigma$. However, the same argument applied to $\phi^{-1}$ leads to $\phi^{-1}(\partial\Sigma)\subset\partial K$; therefore, $\partial\Sigma\subset\phi(\partial K)$, which concludes the proof.
\end{proof}

Lemma \ref{lem: involutive isometry} implies the following Proposition.

\begin{Proposition}\label{prop: involutive isometry}Let $K\subset\R^2$ and $\Sigma\subset\R^2$ be $2$-dimensional compact convex subsets of $\R^2$ with Steiner point at the origin and let $\phi\colon\mathcal{D}K\to\mathcal{D}\Sigma$ be an isometry. There exists $\mu\in\Or_2(\R)$ such that $\mu(K)=\Sigma$ and $\phi=\mu_{\mathcal{D}}$ if $\phi(K^{\mathbf{2}})=\Sigma^{\mathbf{2}}$ (resp. $s_{\Sigma}\circ\phi=\phi\circ s_K=\mu_{\mathcal{D}}$ if $\phi(K^{\mathbf{2}})=\Sigma^\mathbf{1}$), where we introduced $\mu_{\mathcal{D}}$ (resp. $s_{\Sigma}$ and $s_K$) in Notation \ref{not: double iso} (resp. Notation \ref{not: inversion}).
\end{Proposition}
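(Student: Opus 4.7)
My plan is to leverage the previous lemma, which gives $\phi(\partial K)=\partial\Sigma$ and hence forces $\phi$ to either preserve or swap the two sheets $K^{\mathbf{1}}, K^{\mathbf{2}}$. Reducing to the non-swapping case by composition with $s_\Sigma$, I will restrict $\phi$ to each sheet separately, extract a Euclidean isometry of the flat convex pieces, extend it to a rigid motion of $\R^2$, then use the Steiner point hypothesis to force it to lie in $\Or_2(\R)$. The final, subtle step is to check that the two orthogonal maps one obtains from the two sheets coincide.

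First I would invoke Lemma \ref{lem: involutive isometry} to get $\phi(\partial K)=\partial\Sigma$. Since $K^{\mathbf{1}}\cup K^{\mathbf{2}}=\mathcal{D}K$ and $\mathring{K}^{\mathbf{1}},\mathring{K}^{\mathbf{2}}$ are the two connected components of $\mathcal{D}K\setminus\partial K$, continuity of $\phi$ forces $\phi$ to either send $K^{\mathbf{2}}$ to $\Sigma^{\mathbf{2}}$ or to $\Sigma^{\mathbf{1}}$. Assume first $\phi(K^{\mathbf{2}})=\Sigma^{\mathbf{2}}$. Because $K$ and $\Sigma$ are convex in $\R^2$, the intrinsic metric on each coincides with the restriction of $\de$. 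Thus $\psi\coloneqq \phi|_{K^{\mathbf{2}}}\colon(K,\de)\to(\Sigma,\de)$ is a Euclidean isometry between two $2$-dimensional compact convex subsets of $\R^2$. Since $K$ contains three affinely independent points (being $2$-dimensional), $\psi$ extends uniquely to a rigid motion $\tilde\psi\in\Iso(\R^2)$ with $\tilde\psi(K)=\Sigma$. The Steiner point is equivariant under rigid motions, so $\tilde\psi(0)=\tilde\psi(s(K))=s(\Sigma)=0$, which forces $\tilde\psi=:\mu\in\Or_2(\R)$, and $\mu(K)=\Sigma$.

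To upgrade this to the identity $\phi=\mu_{\mathcal{D}}$, I apply the same argument to $\phi|_{K^{\mathbf{1}}}$, obtaining a second orthogonal map $\mu'$ with $\mu'(K)=\Sigma$. The equality $\phi|_{\partial K}=\mu_{\mathcal{D}}|_{\partial K}=\mu'_{\mathcal{D}}|_{\partial K}$ is automatic since points of $\partial K$ are identified across the sheets. Hence $\mu$ and $\mu'$ agree on $\partial K$; the $2$-dimensionality of $K$ ensures $\partial K$ contains three non-collinear points, so $\mu=\mu'$ and therefore $\phi=\mu_{\mathcal{D}}$ on all of $\mathcal{D}K$.

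For the swapping case $\phi(K^{\mathbf{2}})=\Sigma^{\mathbf{1}}$, note that both $s_\Sigma\circ\phi$ and $\phi\circ s_K$ are isometries $\mathcal{D}K\to\mathcal{D}\Sigma$ sending $K^{\mathbf{2}}$ to $\Sigma^{\mathbf{2}}$, so the previous paragraph yields orthogonal maps $\mu_1$ and $\mu_2$ with $(s_\Sigma\circ\phi)=(\mu_1)_{\mathcal{D}}$ and $(\phi\circ s_K)=(\mu_2)_{\mathcal{D}}$. Since $s_K$ and $s_\Sigma$ fix $\partial K$ and $\partial\Sigma$ pointwise, both compositions equal $\phi$ on $\partial K$, so $\mu_1$ and $\mu_2$ agree on $\partial K$ and hence $\mu_1=\mu_2=:\mu$, yielding $s_\Sigma\circ\phi=\phi\circ s_K=\mu_{\mathcal{D}}$. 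The main obstacle is precisely the coincidence $\mu=\mu'$ (respectively $\mu_1=\mu_2$): it is essential to use that $\partial K$ is not contained in a line, a fact guaranteed by $\dim K = 2$.
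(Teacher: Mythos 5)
Your proof is correct and follows essentially the same route as the paper's: invoke Lemma \ref{lem: involutive isometry} to reduce to the sheet-preserving case, restrict $\phi$ to each sheet to get a Euclidean isometry of convex bodies, extend to a rigid motion of $\R^2$, use the Steiner point to land in $\Or_2(\R)$, and glue the two extensions by matching them on $\partial K$ (the paper cites Theorem 2.2 of Alestalo--Trotsenko--V\"ais\"al\"a for the extension step, where you instead give the elementary "three affinely independent points" argument, but this is the same underlying fact). Your explicit treatment of the swapping case is slightly longer than the paper's, which handles it by composing once with $s_\Sigma$ and leaving the conclusion implicit, but the content is identical.
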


\begin{proof}First of all, observe that thanks to Lemma \ref{lem: involutive isometry}, we have $\phi(\partial K)=\partial\Sigma$. In particular, $\phi$ maps the connected components $\mathring{K}^\mathbf{1}$ and $\mathring{K}^{\mathbf{2}}$ to $\mathring{\Sigma}^\mathbf{1}\sqcup\mathring{\Sigma}^{\mathbf{2}}$. Hence, composing $\phi$ with $s_{\Sigma}$ if necessary, we can assume that $\phi(K^{\mathbf{i}})=\Sigma^{\mathbf{i}}$ ($\mathbf{i}\in\{\mathbf{1},\mathbf{2}\}$). In particular, there exists isometries $\mu\colon K\to \Sigma$ and $\nu\colon K\to \Sigma$ such that $\mu_{\lvert\partial K}=\nu_{\lvert\partial K}$ and such that, for every $x\in K$, we have $\phi(x^{\mathbf{2}})=\mu(x)^{\mathbf{2}}$ and $\phi(x^\mathbf{1})=\nu(x)^\mathbf{1}$. Thanks to Theorem 2.2 of \cite{Alestalo-Trotsenko-Vaisala_01}, we can assume that $\mu$ and $\nu$ are isometries of $\R^2$. However $\Span(\partial K)=\R^2$; therefore, since $\mu$ and $\nu$ coincide on $\partial K$, we necessarily have $\mu=\nu$. In addition, $K$ and $\Sigma$ have their Steiner point at the origin and $\mu(K)=\Sigma$; thus, we necessarily have $\mu(0)=0$, i.e. $\mu\in\Or_2(\R)$. Hence, we can conclude that $\phi=\mu_{\mathcal{D}}$
\end{proof}

We are now able to prove Proposition \ref{prop: realisation RP2}.

\begin{proof}[Proof of Proposition \ref{prop: realisation RP2}]First of all, assume that $D_1,D_2\in\tK$ satisfy $\Psi_{\mathbb{S}^2}^{\mathrm{eq}}([D_1])=\Psi_{\mathbb{S}^2}^{\mathrm{eq}}([D_2])$. By definition, $\Phi_{\mathbb{S}^2}(D_1)$ is equivariantly isometric to $\Phi_{\mathbb{S}^2}(D_2)$. In particular $\Psi_{\mathbb{S}^2}([D_1])=\Psi_{\mathbb{S}^2}([D_2])$. Hence, thanks to Theorem \ref{th: homeo S2}, we have $[D_1]=[D_2]$. Thus, $\Psi_{\mathbb{S}^2}^{\mathrm{eq}}$ is injective.\\
Let us now show that $\Psi_{\mathbb{S}^2}^{\mathrm{eq}}$ is surjective. Let $\dist$ be an equivariant nonnegatively curved metric on $\mathbb{S}^2$ and let us show that there exists $D\in\tK$ such that $(\mathbb{S}^2,\dist)$ is equivariantly isometric to $\Phi_{\mathbb{S}^2}(D)$. Thanks to Theorem \ref{th: homeo S2}, either there exists $B\in\Ks$ of dimension $3$ such that $(\mathbb{S}^2,\dist)$ is isometric to $\partial B$, or there exists $K\in\Ks$ of dimension $2$ such that $(\mathbb{S}^2,\dist)$ is isometric to $\DK$.\\
Let us first assume that we have an isometry $\phi\colon (\mathbb{S}^2,\dist)\to\partial B$. We define $f\colon x\in\partial B\to \phi(-\phi^{-1}(x))\in\partial B$. It is sufficient to prove that $f$ coincides with $-\id_{\R^3}$. To do so, note that $f$ is a self-isometry of $\partial B$. Therefore, thanks to Theorem 5.2.1 of \cite{Burago-Zalgaller_92}, $f$ is also a self-isometry of $(\partial B,\de)$. Thus, thanks to Theorem 2.2 of \cite{Alestalo-Trotsenko-Vaisala_01}, we can extend $f$ into an isometry of $(\R^3,\de)$ (which we also denote $f$). Observe that $f$ is in particular an affine transformation; thus, $f(B)=f(\{\mathrm{Conv}(\partial B)\})=\mathrm{Conv}\{f(\partial B)\}=\mathrm{Conv}\{\partial B\}=B$. Therefore, we have $f(0)=f(s(B))=s(f(B))=s(B)=0$ (where $s(B)$ is the Steiner point of $B$), i.e. $f\in\Or_3(\R)$. Now, note that by definition $f$ is involutive on $\partial B$. Moreover, $\partial B$ spans $\R^3$. Hence, $f$ is an orthogonal involution of $\R^3$ and can be diagonalized with eigenvalues $\pm 1$. However, by definition, $f$ has no fixed point on $\partial B$, so, $1$ cannot be an eigenvalue of $f$. Hence, $f=-\id_{\R^3}$.\\
Now, let us assume that we have an isometry $\phi\colon(\mathbb{S}^2,\dist)\to\DK$. Observe that without loss of generality, we may assume that $K\subset \R^2\times\{0\}$. As above, we introduce $f\colon x\in\DK\to \phi(-\phi^{-1}(x))\in \DK$, which is an involutive isometry without any fixed points. Let us prove that $f$ coincides with the action of $-1$ on $\DK$. Applying Lemma \ref{lem: involutive isometry}, we have either $f(K^{\mathbf{2}})\subset K^{\mathbf{2}}$ or $f(K^{\mathbf{2}})\subset K^{\mathbf{1}}$. If $f(K^{\mathbf{2}})\subset K^{\mathbf{2}}$, then Brouwer's fixed point Theorem implies that $f$ has a fixed point on $K^{\mathbf{2}}$ (which is not possible by definition of $f$). Hence, we necessarily have $f(K^{\mathbf{2}})\subset K^{\mathbf{1}}$. Note that, thanks to Proposition \ref{prop: involutive isometry}, there exists an isometry $\mu\in\Or_2(\R)$ such that $\mu(K)=K$ and $s_{K}\circ f=\mu_{\mathcal{D}}$ (see notations \ref{not: inversion} and \ref{not: double iso}). In addition, let us recall that $f^{2}=\id_{\DK}$. Thus, using $\mu_{\mathcal{D}}\circ s_{K}=s_{K}\circ \mu_{\mathcal{D}}$ and $s_{K}^{2}=\id_{\DK}$, we obtain that $\mu$ is involutive on $K$. However, $\Span(K)=\R^2$; hence, $\mu^{2}=\id_{\R^2}$. In particular, $\mu$ is diagonalisable on $\R^2$ with eigenvalues $\pm1$. However, if $1$ is an eigenvalue, then $f$ admits a fixed point on the boundary of $K$, which can't happen. Therefore, $\mu=-\id_{\R^2}$ which implies that $K=-K$ and that $f$ coincides with the action of $-1$ on $\DK$.
\end{proof}

Now, we are going to focus on how to realise nonnegatively curved metrics on $\mathbb{D}$ using convex compact subsets of $\R^3$. But first, we need to introduce some notations.

\begin{Notation}\label{not: moduli space of the closed disc}Given $\alpha\in\mathbb{S}^2$, we denote $H_{\alpha}^{-}\coloneqq\{\braket{\alpha,\cdot}\leq0\}$ and $H_{\alpha}^{+}\coloneqq\{\braket{\alpha,\cdot}\geq0\}$ respectively the lower half-space and upper half-space induced by $\alpha$, and $H_{\alpha}\coloneqq\{\alpha\}^{\perp}$. We write $r_{\alpha}$ for the reflection w.r.t. $H_{\alpha}$. We then denote $\mathcal{K}^{\alpha}\coloneqq\{D\in\mathcal{K}^s,r_{\alpha}(D)=D\}$ and:
$$
\mathscr{K}\coloneqq\bigcup_{\alpha\in\mathbb{S}^2}\mathcal{K}^{\alpha}\times\{\alpha\}\subset\mathcal{K}^s\times\mathbb{S}^2,
$$
and $\mathscr{K}_{2\leq3}\coloneqq\mathscr{K}\cap\Ks\times\mathbb{S}^2$. The last two spaces introduced are endowed with the direct topology.
\end{Notation}

\begin{Remark}\label{rem: condition on alpha}If $(K,\alpha)\in\mathscr{K}$ satisfies $\dim(K)=2$, then we have either $\alpha\in\Span(K)$ or $\alpha\in\Span(K)^{\perp}$. Indeed, let us assume that $\alpha\notin\Span(K)$ and let us show that, in that case, $r_{\alpha}$ coincides with $\id$ on $\Span(K)$. If $r_{\alpha}$ does not coincide with $\id$ on $\Span(K)$, then there exists $x\in\Span(K)\backslash\{0\}$ such that $r_{\alpha}(x)=-x$ (using $r_{\alpha}(K)=K$). Moreover, since $\alpha\notin\Span(K)$, then $\Span(x,\alpha)$ has dimension $2$. However, $r_{\alpha}$ coincides with $-\id$ on $\Span(x,\alpha)$, which contradicts the fact that $\Dim(\Ker(r_{\alpha}+\id))=1$. Therefore, $r_{\alpha}$ necessarily coincides with $\id$ on $\Span(K)$. Hence, $\Ker(r_{\alpha}-\id)=\Span(K)=\{\alpha\}^{\perp}$, i.e. $\alpha\in\Span(K)^{\perp}$.\\
Proceeding with the same idea, we can show that if $(L,\alpha)\in\mathscr{K}$ satisfies $\dim(L)=1$, then either $\alpha\in\Span(L)$ or $\alpha\perp \Span(L)$.
\end{Remark}

The following subsets associated with the double of a plane region will be crucial to obtain nonnegatively curved metrics on $\mathbb{D}$.

\begin{Notation}\label{not: DKalpha}Assume that $(K,\alpha)\in\mathscr{K}$ such that $\dim(K)=2$ and $\alpha\in\Span(K)$. We denote $\DK_{\alpha}^{\pm}\coloneqq\cup_{{\mathbf{i}}={\mathbf{1}}}^{\mathbf{2}}(K\cap H_{\alpha}^{\pm})^{\mathbf{i}}\subset\DK$, $\mathcal{D}\mathring{K}_{\alpha}^{\pm}\coloneqq\cup_{{\mathbf{i}}={\mathbf{1}}}^{\mathbf{2}}(K\cap \mathring{H}_{\alpha}^{\pm})^{\mathbf{i}}\subset\DK$, and $\DK_{\alpha}\coloneqq\cup_{{\mathbf{i}}={\mathbf{1}}}^{\mathbf{2}}(K\cap H_{\alpha})^{\mathbf{i}}=\DK_{\alpha}^{+}\cap\DK_{\alpha}^{-}\subset\DK$.
Observe that all of the sets above are convex subsets of $\DK$. In particular, $\DK_{\alpha}^{+}$ is isometric to $\mathbb{D}$ endowed with a nonnegatively curved metric.\end{Notation}

It is always possible to symmetrise a convex compact subsets of $\R^3$ w.r.t. a specific direction.

\begin{Notation}\label{not: symmetrisation}Given $\alpha\in\mathbb{S}^2$ and $D\in\mathcal{K}^s$, we denote $D^{\alpha}\coloneqq (D+r_{\alpha}(D))/2$, where $+$ is the Minkowski sum.
\end{Notation}

We now introduce the map that will lead to the correspondence between $\mathscr{M}_{\mathrm{curv}\geq0}(\mathbb{D})$ and a particular space of convex compact subsets of $\R^3$.

\begin{Notation}\label{def: disc correspondence}Given $(D,\alpha)\in\mathscr{K}$ (see Notation \ref{not: moduli space of the closed disc}), we denote:
\begin{itemize}
\item[(i)] $\Phi_{\mathbb{D}}(D,\alpha)\coloneqq\partial D\cap H_{\alpha}^+$ if $\dim(D)=3$,
\item[(ii)] $\Phi_{\mathbb{D}}(D,\alpha)\coloneqq D$ if $\alpha\in\Span(D)^{\perp}$,
\item[(iii)] $\Phi_{\mathbb{D}}(D,\alpha)\coloneqq\{\mathcal{D}D\}_{\alpha}^+$ if $\dim(D)=2$ and $\alpha\in\Span(D)$ (see Notation \ref{not: DKalpha}),
\item[(iv)] $\Phi_{\mathbb{D}}(D,\alpha)\coloneqq D\cap H_{\alpha}^+$ if $\dim(D)=1$ and $\alpha\in\Span(D)$.
\end{itemize}
Note that in any case we have $\Phi_{\mathbb{D}}(D,\alpha)\subset\Phi_{\mathbb{S}^2}(D)$ (see Notation \ref{not: phi S2}). In each case, $\Phi_{\mathbb{D}}(D,\alpha)$ is endowed with its natural length metric.
\end{Notation}

First, observe that if $(D,\alpha)\in\mathscr{K}_{2\leq3}$ (see Notation \ref{not: moduli space of the closed disc}), it is clear that there exists a nonnegatively curved metric $\dist$ on $\mathbb{D}$ such that $(\mathbb{D},\dist)$ is isometric to $\Phi_{\mathbb{D}}(D,\alpha)$; hence, $[\Phi_{\mathbb{D}}(D,\alpha)]\in\mathscr{M}_{\mathrm{curv\geq0}}(\mathbb{D})$. Note that if $\phi\in\Or_3(\R)$, then $\Phi_{\mathbb{D}}(D,\alpha)$ and $\Phi_{\mathbb{D}}(\phi(D),\phi(\alpha))$ are isometric. Therefore, there exists a unique map:
\begin{equation}\label{eq: homeo disc}
\Psi_{\mathbb{D}}\colon\mathscr{K}_{2\leq3}/\Or_3(\R)\to\mathscr{M}_{\mathrm{curv\geq0}}(\mathbb{D})
\end{equation}
such that, for every $(D,\alpha)\in\mathscr{K}_{2\leq3}$, we have $\Psi_{\mathbb{D}}([D,\alpha])=[\Phi_{\mathbb{D}}(D,\alpha)]$. Our next goal is to prove the following proposition.

\begin{Proposition}\label{prop: realisation disc}The map $\Psi_{\mathbb{D}}\colon\mathscr{K}_{2\leq3}/\Or_3(\R)\to\mathscr{M}_{\mathrm{curv\geq0}}(\mathbb{D})$ introduced in \eqref{eq: homeo disc} is a $1$-$1$ correspondence.
\end{Proposition}

\begin{proof}Let us first prove that $\Psi_{\mathbb{D}}$ is surjective. Assume that $\dist$ is a nonnegatively curved metric on $\mathbb{D}$. Thanks to Perelman doubling Theorem (see section 13.3 of \cite{Burago-Gromov-Perelman_92}), the double metric space $\mathcal{D}(\mathbb{D},\dist)$ (see Notation \ref{not: double of K}) is also an Alexandrov space with nonnegative curvature. In addition, $\mathcal{D}\mathbb{D}$ is homeomorphic to $\mathbb{S}^2$. Therefore, thanks to Theorem 1.1 of \cite{Belegradek_18}, either there exists $B\in\Ks$ of dimension $3$ such that $\mathcal{D}(\mathbb{D},\dist)$ is isometric to $\partial B$, or there exists $K\in\Ks$ of dimension $2$ such that $\mathcal{D}(\mathbb{D},\dist)$ is isometric to $\DK$. Before considering each case, let us denote $\phi\colon\mathcal{D}\mathbb{D}\to\mathcal{D}\mathbb{D}$ the map defined by $\phi(x^{\mathbf{1}})\coloneqq x^{\mathbf{2}}$ and $\phi(x^{\mathbf{2}})\coloneqq x^{\mathbf{1}}$, for every $x\in\mathbb{D}$. Observe that $\phi$ is an involutive isometry of $\mathcal{D}(\mathbb{D},\dist)$, whose set of fixed points $\Fix(\phi)$ is equal to $\partial\mathbb{D}=\mathbb{S}^1$.\\
Let us consider the case where there exists an isometry $f\colon\mathcal{D}(\mathbb{D},\dist)\to\partial B$. Note that the map $\psi\coloneqq f\circ\phi\circ f^{-1}$ is an involutive isometry of $\partial B$ whose set of fixed points $\Fix(\psi)$ is homeomorphic to $\mathbb{S}^1$. Arguing as for the $3$-dimensional case in the proof of Proposition \ref{prop: realisation RP2}, we can assume that $\psi$ belong to $\Or_3(\R)$, satisfies $\psi^{2}=\id_{\R^3}$, and $\psi(B)=B$. In particular, $\psi$ is diagonalisable with eigenvalues $\pm1$. Observe that $1$ necessarily has multiplicity $2$ in order for $\Fix(\psi)$ to be homeomorphic to $\mathbb{S}^1$; hence, there exists $\alpha\in\mathbb{S}^2$ such that $\psi=r_{\alpha}$. In particular, $r_{\alpha}(B)=B$, i.e. $(B,\alpha)\in\mathscr{K}_{2\leq3}$ (see Notation \ref{not: moduli space of the closed disc}). Now, notice that $f(\Fix(\phi))=\Fix(\psi)$, i.e. $f(\partial\mathbb{D})=\partial B\cap H_{\alpha}$. Hence, $f$ maps the disjoint union $\mathring{\mathbb{D}}^{\mathbf{1}}\sqcup\mathring{\mathbb{D}}^{\mathbf{2}}$ onto $\{\partial B\cap\mathring{H}_{\alpha}^{-}\}\sqcup\{\partial B\cap\mathring{H}_{\alpha}^{+}\}$. In particular, changing $\alpha$ into $-\alpha$ if necessary, we have $f(\mathring{\mathbb{D}}^{\mathbf{2}})=\partial B\cap\mathring{H}_{\alpha}^{+}$. Therefore, $f$ is an isometry from $(\mathbb{D},\dist)$ to $\Phi_{\mathbb{D}}(B,\alpha)$.\\
Now, we assume that there exists an isometry $f\colon\mathcal{D}(\mathbb{D},\dist)\to\DK$ and, without loss of generality, that $K\subset \R^2\times\{0\}$. As before, $\psi\coloneqq f\circ\phi\circ f^{-1}$ is an involutive isometry of $\DK$ such that $\mathrm{Fix}(\psi)$ is homeomorphic to $\mathbb{S}^1$. Thanks to Lemma \ref{lem: involutive isometry}, we either have $\psi(K^{\mathbf{2}})\subset K^{\mathbf{2}}$ or $\psi(K^{\mathbf{2}})\subset K^\mathbf{1}$.\\
If $\psi(K^{\mathbf{2}})\subset K^{\mathbf{2}}$, then by Proposition \ref{prop: involutive isometry} there exists $\mu\in\Or_2(\R)$ such that $\mu(K)=K$ and $\psi=\mu_{\mathcal{D}}$ (see Notation \ref{not: double iso}). Since $\psi$ is involutive and $\Span(K)=\R^2$, we have $\mu^{2}=\id_{\R^2}$ which implies that $\mu$ is diagonalisable with eigenvalues $\pm1$. We then note that $1$ necessarily has multiplicity $1$, otherwise $\Fix(\psi)$ is not homeomorphic to $\mathbb{S}^1$. Hence, there exists $\alpha\in\Span(K)$ such that $\mu=r_{\alpha}$; in particular, $(K,\alpha)\in\mathscr{K}_{2\leq3}$. Observe that $f(\mathrm{Fix}(\phi))=f(\partial \mathbb{D})=\mathrm{Fix}(\psi)=\DK_{\alpha}$ (see Notation \ref{not: DKalpha}). Therefore, $f$ maps the disjoint union $\mathring{\mathbb{D}}^\mathbf{1}\sqcup\mathring{\mathbb{D}}^{\mathbf{2}}$ onto $\mathcal{D}\mathring{K}_{\alpha}^{-}\sqcup\mathcal{D}\mathring{K}_{\alpha}^{+}$. In particular, changing $\alpha$ into $-\alpha$ if necessary, we may assume that $f(\mathring{D}^{\mathbf{2}})=\mathcal{D}\mathring{K}_{\alpha}^{+}$. Therefore, $f$ is an isometry from $(\mathbb{D},\dist)$ to $\mathcal{D}{K}_{\alpha}^{+}=\Phi_{\mathbb{D}}(K,\alpha)$.\\ 
If $\psi(K^{\mathbf{2}})\subset K^\mathbf{1}$, then we can use Proposition \ref{prop: involutive isometry} and proceed as above to get $\mu\in\Or_2(\R)$, such that $\mu(K)=K$, $\mu^{2}=\id_{\R^2}$, and $s_K\circ\psi=\mu_{\mathcal{D}}$ (see notations \ref{not: inversion} and \ref{not: double iso}). As before, $\mu$ is diagonalisable with eigenvalues $\pm1$. Observe that $1$ necessarily has multiplicity $2$ for $\Fix(\psi)$ to be homeomorphic to $\mathbb{S}^1$; therefore, $\mu=\id_{\R^2}$. We then note that $f$ maps $\Fix(\phi)=\partial \mathbb{D}$ onto $\Fix(\psi)=\partial K$. Hence, composing $f$ with $s_K$ if necessary, we can assume that $f(\mathring{D}^{\mathbf{2}})=\mathring{K}^{\mathbf{2}}$. Therefore, fixing any $\alpha\in\Span(K)^{\perp}$, $f$ induces an isometry between $(\mathbb{D},\dist)$ and $K=\Phi_{\mathbb{D}}(K,\alpha)$.\\
Now let us prove that $\Psi_{\mathbb{D}}$ is injective. Let $(D_i,\alpha_i)\in\mathscr{K}_{2\leq3}$ ($i\in\{1,2\}$) and assume that there exists an isometry $\phi \colon\phi_{\mathbb{D}}(D_1,\alpha_1)\to\Phi_{\mathbb{D}}(D_2,\alpha_2)$. We need to prove that there exists $\psi\in\Or_3(\R)$ such that $D_2=\psi(D_1)$ and $\alpha_2=\psi(\alpha_1)$. First of all, observe that $\phi$ induces an isometry $\phi_{\mathcal{D}}\colon\mathcal{D}\{\Phi_{\mathbb{D}}(D_1,\alpha_1)\}\to\mathcal{D}\{\Phi_{\mathbb{D}}(D_2,\alpha_2)\}$. Moreover, there exists an isometry $\nu_i\colon\mathcal{D}\{\Phi_{\mathbb{D}}(D_i,\alpha_i)\}\to\Phi_{\mathbb{S}^2}(D_i)$ (this is readily checked via a case by case study). In particular, thanks to Theorem \ref{th: homeo S2}, we necessarily have $\dim(D_1)=\dim(D_2)$.\\
If $\dim(D_1)=\dim(D_2)=3$, then $\psi\coloneqq\nu_2\circ\phi_{\mathcal{D}}\circ\nu_1^{-1}\colon\partial D_1\to\partial D_2$ is an isometry. Arguing as for the $3$-dimensional case in the proof of Proposition \ref{prop: realisation RP2}, we can assume that $\psi\in\Or_3(\R)$ satisfies $\psi(D_1)=D_2$. Moreover, we can easily chose $\nu_i$ ($i\in\{1,2\}$) so that $\psi(\partial D_1\cap H_{\alpha_1})=\partial D_2\cap H_{\alpha_2}$, which implies that $\psi(H_{\alpha_1})=H_{\alpha_2}$. Hence, composing $\psi$ with $r_{\alpha_2}$ if necessary, we can conclude that $\psi(\alpha_1)=\alpha_2$.\\
Let us assume that $\dim(D_1)=\dim(D_2)=2$. If $\alpha_i\in\Span(D_i)^{\perp}$ ($i\in\{1,2\}$), then $\phi\colon D_1\to D_2$ is an isometry. Thanks to Theorem 2.2 of \cite{Alestalo-Trotsenko-Vaisala_01} and since $s(D_1)=s(D_2)=0$, we can assume that $\phi\in\Or_3(\R)$. In particular, $\phi(\Span(D_1)^{\perp})=\Span(D_2)^{\perp}$. Therefore, composing $\phi$ with $r_{\alpha_2}$ if necessary, we obtain $\phi(D_1)=D_2$ and $\phi(\alpha_1)=\alpha_2$. If $\alpha_1\in\Span(D_1)^{\perp}$ and $\alpha_2\in\Span(D_2)$, then $D_1$ is isometric to $\{\mathcal{D}{D_2}\}_{\alpha_2}^+$. In particular, we necessarily have $\phi(\partial D_1)=\partial \{\mathcal{D}{D_2}\}_{\alpha_2}^+=\{\mathcal{D}{D_2}\}_{\alpha_2}$. However, $\{\mathcal{D}{D_2}\}_{\alpha_2}$ is a convex subset of $\{\mathcal{D}{D_2}\}_{\alpha_2}$ and $\partial D_1$ is not a convex subset of $D_1$; thus, that case cannot happen. The case where $\alpha_i\in\Span(D_i)$ ($i\in\{1,2\}$) can be treated in the same way as the case $\dim(D_1)=\dim(D_2)=3$.
\end{proof}

\subsubsection{Approximation Lemmas}\label{sec: Approximation Lemmas}

In this section, we introduce approximation lemmas. The goal here is to define GH approximations between spaces with various dimensions. This will be crucial later when we will prove that $\Psi_{\mathbb{S}^2}^{\mathrm{eq}}$ (see \eqref{eq: homeo S2 eq}) and $\Psi_{\mathbb{D}}$ (see \eqref{eq: homeo disc}) are homeomorphisms.\\

First of all, let us recall the following result (see the proof of Lemma 10.2.7 of \cite{Burago-Ivanov_01}).

\begin{Lemma}[$3$ to $3$]\label{lem: 3 to 3}Let $
B,B'\in\mathcal{K}^s$ such that $\dim(B)=\dim(B')=3$ and assume that there exists $\epsilon\in(0,1)$ such that $(1-\epsilon)B\subset B'\subset (1+\epsilon) B$ and $(1-\epsilon)B'\subset B\subset (1+\epsilon) B'$. If we denote $f$ and $g$ respectively the restriction of $p^{\mathrm{c}}_{\partial B}$ to $\partial B'$ and the restriction of $p^{\mathrm{c}}_{\partial B'}$ to $\partial B$ (see Notation \ref{not: central projection}), then $(f,g)$ is a GH $\nu$-approximation (see Remark \ref{remark: equivariant GH topology}) between $\partial B'$ and $\partial B$, where $\nu\coloneqq6(\Diam(B)+\Diam(B'))\epsilon$. Moreover, if $B,B'\in\tilde{\mathcal{K}}$ (see Notation \ref{not: Spaces of convex compacta}), then $(f,g,\id)$ is an equivariant GH $\nu$-approximation between $(\partial B',\{\pm1\})$ and $(\partial B,\{\pm1\})$.
\end{Lemma}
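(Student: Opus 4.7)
The plan is to use that $0$ lies in the interior of both bodies to set up the central projections explicitly, then deduce (ii) of Definition \ref{definition: equivariant mGH distance} for free, reduce (i) to a curve-length bound, and settle equivariance by symmetry. Throughout, write $f(x)=t(x)\cdot x$ with $t(x)>0$ for $x\in\partial B'$, and likewise $g(y)=s(y)\cdot y$ for $y\in\partial B$.

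\textbf{Pointwise estimate.} Since $B,B'$ are three-dimensional with Steiner point at the origin, $0\in \mathring{B}\cap \mathring{B'}$, so $f$ and $g$ are well defined and continuous. The inclusion $B'\subset(1+\epsilon)B$ gives $x/(1+\epsilon)\in B$, so $t(x)\geq 1/(1+\epsilon)$; conversely, $(1-\epsilon)B\subset B'$ forces any $s$ with $sx\in B$ to satisfy $(1-\epsilon)s\leq 1$, whence $t(x)\leq 1/(1-\epsilon)$. Therefore
\[
\de(f(x),x)=|t(x)-1|\cdot\|x\|\leq\frac{\epsilon}{1-\epsilon}\Diam(B'),
\]
and the analogous bound $\de(g(y),y)\leq \frac{\epsilon}{1-\epsilon}\Diam(B)$ holds for $g$.

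\textbf{Exact inversion.} Because $f(x)\in\R_{>0}\cdot x$, central projection of $f(x)$ onto $\partial B'$ must return the unique point of the ray on $\partial B'$, namely $x$ itself. Thus $g\circ f=\id_{\partial B'}$ and $f\circ g=\id_{\partial B}$, so condition (ii) of the GH $\nu$-approximation holds with excess $0$.

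\textbf{Metric distortion.} Given $x,y\in\partial B'$ and any rectifiable curve $\gamma\subset\partial B'$ from $x$ to $y$, push it forward: $f\circ\gamma$ is a rectifiable curve in $\partial B$ from $f(x)$ to $f(y)$. The key input, which is exactly the derivative computation in the proof of Lemma 10.2.7 of \cite{Burago-Ivanov_01}, is that the radial rescaling $z\mapsto t(z)z$ between nearby convex surfaces has length-distortion bounded by $\frac{1+\epsilon}{1-\epsilon}$, i.e.\
\[
\mathcal{L}(f\circ\gamma)\leq\frac{1+\epsilon}{1-\epsilon}\mathcal{L}(\gamma).
\]
Infimizing over $\gamma$ and combining with the elementary a priori bound $\dist_{\partial B'}(x,y)\leq C\,\Diam(B')$ (valid for boundaries of $3$-dimensional convex bodies), one obtains $\dist_{\partial B}(f(x),f(y))-\dist_{\partial B'}(x,y)\leq 6\,\epsilon\,\Diam(B')$; swapping the roles of $B$ and $B'$ via $g$ yields the symmetric inequality, so $|\dist_{\partial B}(f(x),f(y))-\dist_{\partial B'}(x,y)|\leq\nu$ with $\nu=6(\Diam(B)+\Diam(B'))\epsilon$.

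\textbf{Equivariance.} When $B=-B$ and $B'=-B'$, the ray $\R_{>0}\cdot(-x)$ is the reflection of $\R_{>0}\cdot x$ through the origin, so by the symmetries of $B$ and $B'$ we get $t(-x)=t(x)$, i.e.\ $f(-x)=-f(x)$; analogously $g(-y)=-g(y)$. Consequently $\id\in\mathrm{Aut}(\{\pm1\})$ intertwines $f$ and $g$ with the antipodal actions, making $(f,g,\id)$ an equivariant GH $\nu$-approximation. The genuine obstacle here is the length estimate in the third step, which requires controlling the derivative of the central projection between close convex surfaces; this is precisely what the cited Burago--Ivanov argument provides, so the role of the present lemma is merely to package that estimate into the GH-approximation language.
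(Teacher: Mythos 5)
Your overall strategy matches the paper's: the lemma is essentially a repackaging of the radial-projection estimate from Lemma 10.2.7 of Burago--Ivanov, and the paper itself does not give a self-contained proof, instead pointing to that lemma and noting in a remark that the only adaptation is the substitution $\Diam(\partial B)\leq\pi\Diam(B)$. Your added observations — the explicit ray parametrization $f(x)=t(x)x$, the sandwich $1/(1+\epsilon)\leq t(x)\leq 1/(1-\epsilon)$, the exact inversion $g\circ f=\id_{\partial B'}$ (so condition (ii) holds with excess $0$), and the symmetry argument $f(-x)=-f(x)$ for the equivariant case — are all correct and useful.

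The one place where your argument does not close is the constant $\nu=6(\Diam(B)+\Diam(B'))\epsilon$. From the Lipschitz bound $\mathcal{L}(f\circ\gamma)\leq\frac{1+\epsilon}{1-\epsilon}\mathcal{L}(\gamma)$ you cite, the natural conclusion is
\[
\dist_{\partial B}(f(x),f(y))-\dist_{\partial B'}(x,y)\;\leq\;\Bigl(\tfrac{1+\epsilon}{1-\epsilon}-1\Bigr)\,\dist_{\partial B'}(x,y)\;\leq\;\tfrac{2\epsilon}{1-\epsilon}\,\pi\,\Diam(B'),
\]
and $\frac{2\pi}{1-\epsilon}>6$ for every $\epsilon\in(0,1)$, so this does not yield your asserted one-sided bound $6\,\epsilon\,\Diam(B')$. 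You state "one obtains" that inequality without showing the arithmetic, and the arithmetic as set up does not deliver it. To actually reach the constant $6$ one needs to extract the sharper estimate from the Burago--Ivanov proof (which controls the additive error of the central projection more finely than a pure Lipschitz bound composed with a diameter estimate); this is the exact content the paper leaves implicit and that your write-up also leaves implicit, but the specific chain of inequalities you sketch does not prove the claimed constant.
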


\begin{Remark}The result is a bit different than what appears in the proof of Lemma 10.2.7 of \cite{Burago-Ivanov_01}; we just used the fact that $\Diam(\partial B)\leq\pi\Diam(B)$ (which also appears in the proof mentioned above).
\end{Remark}

Let us introduce the following notation, which relates $3$-dimensional and $2$-dimensional convex spaces.

\begin{Notation}\label{not: 3 to 2}Let $B\in\mathcal{K}^s$ such that $\dim(B)=3$, let $v\in \mathbb{S}^2$, and denote $K\coloneqq p^{\perp}_{v}(B)$ (see Notation \ref{not: orthogonal projection}). There exist two functions $\phi^{\mathbf{1}}\colon K\to \R$ and $\phi^{\mathbf{2}}\colon K\to \R$ that are respectively convex and concave such that, for every $x\in K$, we have:
$$
(x+\R v)\cap B=[x+\phi^{\mathbf{1}}(x) v,x+\phi^{\mathbf{2}}(x) v].
$$
Moreover, $\partial B$ can be partitioned as $\partial B=\partial B^{\mathbf{1}}\sqcup \partial B^{\mathbf{2}}\sqcup \partial B^{\mathbf{L}}$, where $\partial B^{\mathbf{i}}\coloneqq\{x+\phi^{\mathbf{i}}(x)v,x\in\mathring{K}\}$ ($\mathbf{i}\in\{\mathbf{1},\mathbf{2}\}$) and $\partial B^{\mathbf{L}}\coloneqq\{y, y\in[x+\phi^{\mathbf{1}}(x) v,x+\phi^{\mathbf{2}}(x) v],x\in\partial K\}$. This allows us to introduce:
$$
f_{B,v}\colon \partial B\to \mathcal{D}K,
$$
the map defined by $f_{B,v}(x+\phi^{\mathbf{i}}(x)v)\coloneqq x^{\mathbf{i}}$ (for $x\in\mathring{K}$ and $\mathbf{i}\in\{\mathbf{1},\mathbf{2}\}$) and $f_{B,v}(y)\coloneqq x$ (for every $x\in\partial K$ and $y\in[x+\phi^{\mathbf{1}}(x) v,x+\phi^{\mathbf{2}}(x) v]$). Finally, we introduce:
$$
g_{B,v}\colon\mathcal{D}K\to\partial B,
$$
the map defined by $g_{B,v}(x^{\mathbf{i}})\coloneqq x+\phi^{\mathbf{i}}(x)v$ (for $x\in\mathring{K}$ and $\mathbf{i}\in\{\mathbf{1},\mathbf{2}\}$) and $g_{B,v}(x)\coloneqq x+[(\phi^{\mathbf{1}}(x)+\phi^{\mathbf{2}}(x))/2] v$ (for $x\in\partial K$).
\end{Notation}

The next lemma introduces approximations between a $3$-dimensional convex space and a $2$-dimensional one.

\begin{Lemma}[$3$ to $2$]\label{lem: 3 to 2}Let $B\in\mathcal{K}^s_{2\leq3}$ such that $\dim(B)=3$, let $v\in \mathbb{S}^2$, and denote $K\coloneqq p^{\perp}_{v}(B)$. If we denote $f\coloneqq f_{B,v}$, $g\coloneqq g_{B,v}$ (see Notation \ref{not: 3 to 2}), and $\epsilon\coloneqq\sup_{x\in B}\{\de(x,p^{\perp}_{v}(x))\}$, then $(f,g)$ is a GH $10\epsilon$-approximation between $\partial B$ and $\DK$. Moreover, if $B=-B$, then $(f,g,\id)$ is an equivariant GH $10\epsilon$-approximation between $(\partial B,\{\pm1\})$ and $(\DK,\{\pm1\})$.
\end{Lemma}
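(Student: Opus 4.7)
The plan is to verify each of the four axioms of a GH $\epsilon$-approximation directly, building on three elementary consequences of the setup: since $B\subset\{|\langle\,\cdot\,,v\rangle|\leq\epsilon\}$, the functions $\phi^{\mathbf{1}},\phi^{\mathbf{2}}$ take values in $[-\epsilon,\epsilon]$; each vertical segment forming $\partial B^{\mathbf{L}}$ has length at most $2\epsilon$; and on any straight segment of $K$ the convex function $\phi^{\mathbf{1}}$ (or concave $\phi^{\mathbf{2}}$) has total variation at most $4\epsilon$, since a convex profile with range in $[-\epsilon,\epsilon]$ is either monotone or V-shaped with total variation at most $(\phi(\text{start})-\phi_{\min})+(\phi(\text{end})-\phi_{\min})\leq 4\epsilon$.

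The identity $f\circ g=\id_{\DK}$ is immediate from the definitions. For $x\in\partial B^{\mathbf{L}}$ sitting above $x_0\in\partial K$, the point $g(f(x))$ is the midpoint of the vertical segment in $\partial B^{\mathbf{L}}$ through $x_0$; since this segment lies in $\partial B$ and has length at most $2\epsilon$, we get $\dist_{\partial B}(g(f(x)),x)\leq\epsilon$, and on $\partial B^{\mathbf{1}}\cup\partial B^{\mathbf{2}}$ we have $g(f(x))=x$. Moreover, $f$ factors through the $1$-Lipschitz orthogonal projection $p^{\perp}_v$ composed with the sheet-identification map $K\sqcup K\to\DK$, and so does not increase the length of any rectifiable curve on $\partial B$; hence $\dist_{\DK}(f(x),f(y))\leq\dist_{\partial B}(x,y)$.

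The heart of the argument is the reverse distortion estimate for $g$. A near-geodesic $\sigma$ in $\DK$ between $\xi_1,\xi_2$ is either a single straight segment within one sheet, or two straight sub-segments meeting at some $z\in\partial K$. I lift $\sigma$ to $\partial B$ by graphing each sub-segment $[x,z]\subset K$ over the appropriate $\phi^{\mathbf{i}}$, then bridging between the two lifts by the vertical segment above $z$ in $\partial B^{\mathbf{L}}$. By the variation bound, each graph has length at most $\de(x,z)+4\epsilon$, and the bridge has length at most $2\epsilon$, so the lift has total length at most $\mathcal{L}(\sigma)+10\epsilon$, yielding $\dist_{\partial B}(g(\xi_1),g(\xi_2))\leq\dist_{\DK}(\xi_1,\xi_2)+10\epsilon$. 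The converse $\dist_{\DK}(\xi_1,\xi_2)\leq\dist_{\partial B}(g(\xi_1),g(\xi_2))$ follows from $f\circ g=\id_{\DK}$ combined with the second paragraph, and the reverse distortion estimate for $f$ is obtained by inserting $g(f(x))$ and $g(f(y))$ into the triangle inequality and absorbing the $\epsilon$'s coming from the near-identity $g\circ f$.

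Finally, for the equivariant statement, the symmetry $B=-B$ forces $K=-K$ and, by examining the fiber $(x+\R v)\cap B$ under $y\mapsto-y$, yields $\phi^{\mathbf{1}}(-x)=-\phi^{\mathbf{2}}(x)$. A direct substitution then gives $f(-y)=-1\cdot f(y)$ and $g(-1\cdot\xi)=-g(\xi)$ on every stratum, so $(f,g,\id)$ is an equivariant approximation. I expect the lifting construction in the third paragraph to be the main technical point, particularly the continuity of the lift through the transition along $\partial B^{\mathbf{L}}$ and the careful tracking of how the two graphs glue along the vertical bridge; the remaining work is routine convex-geometric bookkeeping.
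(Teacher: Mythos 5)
Your proposal takes essentially the same route as the paper: bound the total variation of the convex/concave height functions $\phi^{\mathbf{1}},\phi^{\mathbf{2}}$ by $4\epsilon$, use the fact that $p^{\perp}_v$ is $1$-Lipschitz to get the non-increasing direction, and lift (near-)geodesics in $\DK$ to curves on $\partial B$ to get the reverse estimate. The lifting construction, the $4\epsilon$ variation bound, the $\epsilon$-bound on $\dist_{\partial B}(g\circ f(x),x)$, and the equivariance computation using $\phi^{\mathbf{1}}(-x)=-\phi^{\mathbf{2}}(x)$ are all correct and match the paper's argument (the paper is less explicit about the ``bridge'' in $\partial B^{\mathbf{L}}$, so your write-up is if anything more careful there).

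There is one small but genuine gap: the distortion bound for $f$. You propose to deduce it from the $g$-estimate by inserting $g\circ f(x)$ and $g\circ f(y)$ into the triangle inequality. But tracking the constants, this gives
\[
\dist_{\partial B}(x,y)\leq \dist_{\partial B}(x,g(f(x)))+\dist_{\partial B}(g(f(x)),g(f(y)))+\dist_{\partial B}(g(f(y)),y)\leq \dist_{\DK}(f(x),f(y))+12\epsilon,
\]
i.e.\ $\Dis(f)\leq 12\epsilon$, which falls short of the $10\epsilon$ claimed in the lemma. The remedy is not to go through $g\circ f$ at all, but to run the \emph{same} direct lifting argument you used for $g$, now for $f$: given $x,y\in\partial B$, take a near-geodesic in $\DK$ from $f(x)$ to $f(y)$ and lift it to $\partial B$ starting at $x$ and ending at $y$ (inserting vertical segments along the appropriate fibres when $x$ or $y$ lies in $\partial B^{\mathbf{L}}$, of total length at most $4\epsilon$ for both endpoints combined, or at most $2\epsilon$ for the interior bridge when the geodesic actually crosses $\partial K$). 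A short case analysis shows the lift always has length at most $\dist_{\DK}(f(x),f(y))+10\epsilon$, which combined with the length-non-increasing direction gives $\Dis(f)\leq 10\epsilon$. This is what the paper means by ``the argument used to estimate the distortion of $g$ can be used to show that $\Dis(f)\leq 10\epsilon$.'' (In fairness, for the downstream applications only qualitative convergence is needed, so $12\epsilon$ would also serve; but it does not prove the lemma as stated.)
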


\begin{proof}First of all, assume that $x,y\in\mathring{K}$. Given $t\in[0,1]$, set $\alpha(t)\coloneqq tx+(1-t)y$ and $\gamma(t)\coloneqq\alpha(t)+\phi_{\mathbf{2}}(\alpha(t))v$. Observe that $\gamma$ is a curve with values in $\partial B$ such that $\gamma(0)=g(x^{\mathbf{2}})$ and $\gamma(1)=g(y^{\mathbf{2}})$. In particular, $\dist_{\partial B}(g(x^{\mathbf{2}}),g(y^{\mathbf{2}}))\leq\mathcal{L}(\gamma)\leq\mathcal{L}(\alpha)+\mathcal{L}(\beta)$, where $\beta(t)\coloneqq\phi_{\mathbf{2}}(\alpha(t))$. However, $\mathcal{L}(\alpha)=\de(x,y)=\dist_{\DK}(x^{\mathbf{2}},y^{\mathbf{2}})$. In addition, $\beta$ is a concave function from $[0,1]$ to $[-\epsilon,\epsilon]$ (using the fact that $\phi_{\mathbf{2}}$ is concave and $B\subset K+[-\epsilon,\epsilon]v$). Hence, there exists $t_0\in[0,1]$ such that $\beta$ is increasing on $[0,t_0]$ and non-increasing on $[t_0,1]$. Therefore, $\mathcal{L}(\beta)=\int_{0}^1\lvert\beta'\rvert=\int_{0}^{t_0}\beta'-\int_{t_0}^1\beta'=2\beta(t_0)-\beta(1)-\beta(0)$. In particular, we have $\mathcal{L}(\beta)\leq 4\epsilon$ which implies that:
\begin{equation}\label{eq: approx 3 to 2_1}
\dist_{\partial B}(g(x^{\mathbf{2}}),g(y^{\mathbf{2}}))-\dist_{\DK}(x^{\mathbf{2}},y^{\mathbf{2}})\leq4\epsilon.
\end{equation}
Conversely, let us assume that $\gamma$ is a geodesic between $g(x^{\mathbf{2}})$ and $g(y^{\mathbf{2}})$ on $\partial B$. We then note that $p^{\perp}_{v}(\gamma)$ has shorter length and joins $x$ to $y$, thus:
\begin{equation}\label{eq: approx 3 to 2_2}
\dist_{\DK}(x^{\mathbf{2}},y^{\mathbf{2}})=\de(x,y)\leq\mathcal{L}(p^{\perp}_{v}(\gamma))\leq\mathcal{L}(\gamma)=\dist_{\partial B}(g(x^{\mathbf{2}}),g(y^{\mathbf{2}})).
\end{equation}
Hence, as a result of \eqref{eq: approx 3 to 2_1} and \eqref{eq: approx 3 to 2_2}, we obtain:
\begin{equation}\label{equation: revisions1}
\lvert\dist_{\partial B}(g(x^{\mathbf{2}}),g(y^{\mathbf{2}})-\dist_{\DK}(x^{\mathbf{2}},y^{\mathbf{2}})\rvert \leq4\epsilon.
\end{equation}
The same argument works with $x^{\mathbf{2}}$ and $y^{\mathbf{2}}$ replaced by $x^{\mathbf{1}}$ and $y^{\mathbf{1}}$, respectively.\\
Now, assume that $x\in\mathring{K}$, $z\in\partial K$, and $\mathbf{i}\in\{\mathbf{1},\mathbf{2}\}$. Observe that, proceeding as we did for \eqref{eq: approx 3 to 2_2}, we have $\dist_{\DK}(x^{\mathbf{i}},z) \leq \dist_{\partial B}(g(x^{\mathbf{i}}),g(z))$. Proceeding as we did for \eqref{eq: approx 3 to 2_1}, we have the following inequality:
\begin{equation*}
\dist_{\partial B}(g(x^{\mathbf{i}}),z+\phi_{\mathbf{i}}(z))\leq\dist_{\DK}(x^{\mathbf{i}},z)+4\epsilon.
\end{equation*}
However, note that $\dist_{\partial B}(z+\phi_{\mathbf{i}}(z),g(z))=\lvert \phi_{\mathbf{2}}(z)-\phi_{\mathbf{1}}(z)\rvert/2\leq \epsilon$ (since $B\subset K+[-\epsilon,\epsilon]v$). Therefore, we obtain $\dist_{\partial B}(g(x^{\mathbf{i}}),g(z))\leq\dist_{\DK}(x^{\mathbf{i}},z)+4\epsilon+\dist_{\partial B}(z+\phi_{\mathbf{i}}(z),g(z))\leq \dist_{\DK}(x^{\mathbf{i}},z)+5\epsilon$. Hence, we have the following inequality:
\begin{equation}\label{equation: revisions2}
\lvert \dist_{\partial B}(g(x^{\mathbf{i}}),g(z))-\dist_{\DK}(x^{\mathbf{i}},z)\rvert \leq 5\epsilon.\\
\end{equation}
The argument in the previous paragraph implies that, for every $z,z'\in\partial K$, we obtain the following inequality:
\begin{equation}\label{equation: revisions3}
\lvert \dist_{\partial B}(g(z),g(z'))-\dist_{\DK}(z,z')\rvert \leq 6\epsilon.\\
\end{equation}
Finally, assume that $x,y\in\mathring{K}$ and let $z\in\partial K$ such that $\dist_{\DK}(x^{\mathbf{1}},y^{\mathbf{2}})=\de(x,z)+\de(z,y)$. Applying the argument we used to obtain \eqref{eq: approx 3 to 2_1} and recalling that $B$ is a subset of $K+[-\epsilon,\epsilon]v$, we obtain the following inequality:
\begin{equation*}
\begin{split}
\dist_{\partial B}(g(x^{\mathbf{1}}),g(y^{\mathbf{2}}))&\leq\dist_{\partial B}(g(x^{\mathbf{1}}),z+\phi_{\mathbf{1}}(z))+\dist_{\partial B}(z+\phi_{\mathbf{1}}(z),z+\phi_{\mathbf{2}}(z))+\dist_{\partial B}(z+\phi_{\mathbf{2}}(z),g(y^{\mathbf{2}}))\\
&\leq 4\epsilon+\de(x,z)+2\epsilon+4\epsilon+\de(z,y)\\
&=10\epsilon+\dist_{\DK}(x^{\mathbf{1}},y^{\mathbf{2}}).
\end{split}
\end{equation*}
Let us then fix a geodesic $\gamma\colon[0,1]\to\partial B$ from $g(x^{\mathbf{1}})$ to $g(y^{\mathbf{2}})$. Observe that $\partial B^{\mathbf{2}}$ is an open subset of $\partial B$ and its boundary is a subset of $\partial B^{\mathbf{L}}$ (see Notation \ref{not: 3 to 2}). Therefore, there exists $0<t_2<1$ such that $\gamma_{\lvert (t_2,1]}\subset \partial B^{\mathbf{2}}$ and $\gamma(t_2)\in\partial B^{\mathbf{L}}$. In particular, we have the following inequality:
\begin{equation*}
\begin{split}
\dist_{\DK}(x^{\mathbf{1}},y^{\mathbf{2}})&\leq \de(x,p_v^{\perp}(\gamma(t_2)))+\de(p_v^{\perp}(\gamma(t_2)),y)\\
&=\de(p_v^{\perp}(\gamma(0)),p_v^{\perp}(\gamma(t_2)))+\de(p_v^{\perp}(\gamma(t_2)),p_v^{\perp}(\gamma(1)))\\
&\leq \mathcal{L}(\gamma_{\lvert [0,t_2]})+\mathcal{L}(\gamma_{\lvert [t_2,1]})=\mathcal{L}(\gamma)=\dist_{\partial B}(g(x^{\mathbf{1}}),g(y^{\mathbf{2}})).
\end{split}
\end{equation*}
Thus, we obtain the following inequality:
\begin{equation}\label{equation: revisions4}
\lvert\dist_{\partial B}(g(x^{\mathbf{1}}),g(y^{\mathbf{2}})-\dist_{\DK}(x^{\mathbf{1}},y^{\mathbf{2}})\rvert \leq10\epsilon.
\end{equation}
As a result of \eqref{equation: revisions1}, \eqref{equation: revisions2},\eqref{equation: revisions3}, and \eqref{equation: revisions4}, we obtain $\Dis(g)\leq10\epsilon$ (see Notation \ref{not: distortion}).\\
It is then easy to see that, for every $x\in\partial B$, we have $\dist_{\partial B}(g\circ f(x),x)\leq 2\epsilon$ and that $f\circ g=\id_{\DK}$. Finally, the argument used to estimate the distortion of $g$ can be used to show that $\Dis(f)\leq10\epsilon$.\\
Now, notice that if $B=-B$, then, for every $x\in K$, we have $\phi_{\mathbf{1}}(-x)=-\phi_{\mathbf{2}}(x)$ and $\phi_{\mathbf{2}}(-x)=-\phi_{\mathbf{1}}(x)$. Therefore, both $f$ and $g$ are equivariant.
\end{proof}

The next lemma compares a $3$-dimensional convex space with its projection on a line.

\begin{Lemma}[$3$ to $1$]\label{lem: 3 to 1}Let $B\in\mathcal{K}^s$ such that $\dim(B)=3$, let $v\in \mathbb{S}^2$, and denote $L\coloneqq p_{v}(B)$ (see Notation \ref{not: orthogonal projection}). If we denote $f\colon\partial B\to L$ the restriction of $p_v$ to $\partial B$, and $\epsilon\coloneqq\sup_{x\in B}\{\de(x,p_{v}(x))\}$, then we have $\Dis(f)\leq (8+\pi)\epsilon$ (see Notation \ref{not: distortion}).
\end{Lemma}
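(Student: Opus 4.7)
The plan is to split the distortion as
\[
\Dis(f) = \sup_{x, y \in \partial B} \bigl(d_{\partial B}(x, y) - d_L(f(x), f(y))\bigr),
\]
which uses that $p_v$ is $1$-Lipschitz to dispose of the trivial inequality $d_L(f(x), f(y)) \leq \de(x, y) \leq d_{\partial B}(x, y)$. The core task is then to produce, for each pair $x, y \in \partial B$ with $a := f(x) \leq b := f(y)$, a rectifiable path on $\partial B$ from $x$ to $y$ of length at most $(b - a) + (8 + \pi)\epsilon$.

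The key geometric input is the thinness of $B$ around the line $\R v$: since $\de(p, p_v(p)) \leq \epsilon$ for all $p \in B$, the projection $p_v^{\perp}(B)$ has diameter at most $2\epsilon$, and consequently every point of $B$ lies within distance $2\epsilon$ of the affine line $\ell := x + \R v$. I would slice $B$ by a $2$-dimensional affine plane $P$ containing both $x$ and $y$ and parallel to $v$; such a plane always exists (it is unique when $y - x \notin \R v$, and may be freely chosen to contain any fixed transverse direction otherwise). The intersection $B \cap P$ is then a $2$-dimensional convex region of $P$, contained in a strip of width at most $4\epsilon$ perpendicular to $v$, and crucially $\partial(B \cap P) \subset \partial B$ since $P$ has empty topological boundary in $\R^3$.

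Introducing coordinates $(s, t)$ on $P$ with $s$ in the direction $v$, the boundary $\partial(B \cap P)$ splits as the graph union $\{t = \phi^+(s)\} \cup \{t = \phi^-(s)\}$ over an interval $[s_-, s_+] \supset [a, b]$, with $\phi^+$ concave, $\phi^-$ convex, and both valued in $[-2\epsilon, 2\epsilon]$. After reflecting $P$ if necessary I may assume $x$ sits on the upper arc, i.e.\ $\phi^+(a) = t(x)$. Walking along the upper arc from $x$ to $(b, \phi^+(b))$, the elementary bound $\sqrt{1 + u^2} \leq 1 + \lvert u \rvert$ gives
\[
\mathrm{length} \;\leq\; (b - a) + \mathrm{TV}(\phi^+|_{[a, b]}) \;\leq\; (b - a) + 8\epsilon,
\]
where concavity yields $\mathrm{TV}(\phi^+|_{[a, b]}) = 2 \max \phi^+|_{[a, b]} - \phi^+(a) - \phi^+(b) \leq 8\epsilon$ under $|\phi^\pm| \leq 2\epsilon$. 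If $y$ also lies on the upper arc the path is complete; otherwise I close with a detour inside the slice boundary $\partial B_b$ from $(b, \phi^+(b))$ to $y$, a closed convex curve of perimeter at most $\pi \cdot \Diam(B_b) \leq 2\pi\epsilon$, which adds at most $\pi\epsilon$. Concatenation then yields a path on $\partial B$ of length at most $(b - a) + (8 + \pi)\epsilon$, as required.

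The main obstacle is the handling of degenerate geometry. First, $P$ may happen to be tangent to $\partial B$ at $x$ or $y$, placing that point in the $P$-relative interior of $B \cap P$ rather than on $\partial(B \cap P)$. Second, the height $a$ or $b$ may coincide with an endpoint of $L$, so that the corresponding slice is a full $2$-dimensional cap of $\partial B$ rather than the boundary of a slice. In the first situation a small transverse perturbation of $P$ resolves the issue by continuity; in the second one inserts a short initial or terminal movement within the flat cap (a convex set of diameter at most $2\epsilon$), the added cost in either case fitting into the loose constant $8 + \pi$.
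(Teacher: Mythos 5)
Your proof is correct and takes a genuinely different route from the paper. The paper compares $\partial B$ with the boundary of the outer cylinder $C := L + \mathcal{B}$ (where $\mathcal{B}$ is the $\epsilon$-ball in $\{v\}^{\perp}$), invokes the Busemann--Feller lemma to transport paths from $\partial C$ back to $\partial B$ under the nearest-point projection $p_B$, and then estimates lengths on $\partial C$ using the product structure of the lateral cylinder $L+\partial\mathcal{B}$. Your argument is instead fully constructive: you slice $B$ by a plane $P$ through $x$, $y$ parallel to $v$, note that the relative boundary of $B\cap P$ lies in $\partial B$, and build an explicit path by walking along the concave graph of the upper boundary function (bounded by $(b-a)$ plus total variation) and closing with a detour along the relative boundary of the planar slice $B\cap\{p_v=b\}$, a convex curve of Euclidean diameter at most $2\epsilon$ and hence intrinsic diameter at most $\pi\epsilon$. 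Both routes land on the same constant $(8+\pi)\epsilon$; yours avoids Busemann--Feller and the surjectivity of $p_B|_{\partial C}$, at the price of the case analysis you flag.

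Two small corrections to that case analysis. First, the ``small transverse perturbation of $P$'' is not available when $y-x\notin\R v$, since $P$ is then uniquely determined by $x$, $y$, $v$; but that situation resolves outright without slicing, because a supporting plane $P$ meets $B$ in a face $B\cap P\subset\partial B$, so the segment $[x,y]\subset B\cap P$ already lies on $\partial B$ and gives $\dist_{\partial B}(x,y)\le\de(x,y)\le(b-a)+2\epsilon$. Second, the endpoint degeneracies are cleaner than merely ``fitting into the loose constant'': choosing $t=0$ on the line $x+\R v$, the initial vertical climb (of length $\phi^{+}(a)$) inside a flat cap is cancelled by the $-\phi^{+}(a)$ term in $\mathrm{TV}\bigl(\phi^{+}|_{[a,b]}\bigr) = 2\max_{[a,b]}\phi^{+}-\phi^{+}(a)-\phi^{+}(b)$, so in every configuration the combined path length is bounded by $(b-a)+(6+\pi)\epsilon$, comfortably within what is needed.
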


\begin{proof}First, denoting $\mathcal{B}\coloneqq B_{\epsilon}(0)\cap\{v\}^{\perp}$, observe that $B\subset L+\mathcal{B}\eqqcolon C$. Let $x,y\in\partial B$ and observe that $
\dist_{L}(f(x),f(y))=\de(p_{v}(x),p_{v}(y))\leq\de(x,y)\leq\dist_{\partial B}(x,y)$. Hence:
\begin{equation}\label{eq: approx 3 to 1_1}
\forall x,y\in\partial B, 0\leq\dist_{\partial B}(x,y)-\dist_{L}(f(x),f(y)).
\end{equation}
Now, it is not hard to check that $p_{B}\colon \partial C\to \partial B$ is surjective (see Notation \ref{not: orthogonal projection}). Let $x_i\in\partial B$ and let $x_i'\in\partial C$ such that $x_i=p_{B}(x_i')$ ($i\in\{1,2\}$). Thanks to the Busemann--Feller Lemma (see the proof of Lemma 10.2.7 in \cite{Burago-Ivanov_01}), we have the following inequality:
\begin{equation}\label{eq: revisions2}
\dist_{\partial B}(x_1,x_2)-\dist_{L}(f(x_1),f(x_2))\leq\dist_{\partial C}(x_1',x_2')-\de(p_{v}(x_1),p_{v}(x_2)).
\end{equation}
It is not trivial to find an upper bound for the term $\dist_{\partial C}(x_1',x_2')$. However, observe that the set $L+\partial \mathcal{B}$ (endowed with its intrinsic metric) is a metric product and is a subset of $\partial C$. Moreover, note that there exists $x_i''\in L+\partial \mathcal{B}$ such that $\dist_{\partial C}(x_i',x_i'')=\de(x_i',x_i'')\leq\epsilon$ ($i\in\{1,2\}$). In particular, we have the following inequality:
\begin{equation}\label{eq: revisions1}
\dist_{\partial C}(x_1',x_2')\leq 2\epsilon + \dist_{\partial C}(x_1'',x_2'')\leq 2\epsilon+\dist_{L+\partial \mathcal{B}}(x_1'',x_2'').
\end{equation}
Let $\gamma$ be a geodesic from $x_1''$ to $x_2''$ in ${L+\partial \mathcal{B}}$. Since $L+\partial \mathcal{B}$ is a metric product, there exists a geodesic $\gamma_L$ in $L$ and a geodesic $\gamma_{\partial\mathcal{B}}$ in $\partial \mathcal{B}$ such that $\gamma=\gamma_L+\gamma_{\partial \mathcal{B}}$. Therefore, we have the following inequality:
\begin{align*}
\dist_{L+\partial \mathcal{B}}(x_1'',x_2'')=\mathcal{L}(\gamma)&\leq\mathcal{L}(\gamma_{\partial \mathcal{B}})+\mathcal{L}(\gamma_{L})\\&\leq \Diam(\partial \mathcal{B})+\de(p_v(x_1''),p_v(x_2''))\\
&\leq \pi\epsilon+\de(p_v(x_1''),p_v(x_2'')).
\end{align*}
In particular, according to \eqref{eq: revisions2} and \eqref{eq: revisions1}, we have the following inequality:
\begin{align*}
\dist_{\partial B}(x_1,x_2)-\dist_{L}(f(x_1),f(x_2))&\leq(2+\pi)\epsilon+\de(p_{v}(x_1''),p_{v}(x_2''))-\de(p_{v}(x_1),p_{v}(x_2))\\
&\leq(2+\pi)\epsilon+\sum_{i=1,2}\de(x_i,x_i')+\de(x_i',x_i'')\\
&\leq (4+\pi)\epsilon+\sum_{i=1,2}\de(x_i,x_i'),
\end{align*}
where we used the fact that $p_{v}$ is $1$-Lipschitz for the second inequality. Moreover, note that $\de(x_i,x_i')=\de(x_i',B)$ ($i\in\{1,2\}$). In addition, writing $x_i'=t_i+v_i$ for some $t_i\in L$ and $v_i\in \mathcal{B}$ ($i\in\{1,2\}$), it is clear that $\{t_i+\mathcal{B}\}\cap B\neq\varnothing$. In particular, there exists $w_i\in \mathcal B$ ($i\in\{1,2\}$) such that $t_i+w_i\in B$. Therefore, $\de(x_i', B)\leq \de (t_i+v_i,t_i+w_i)\leq\Diam(\mathcal{B})=2\epsilon$. Hence, we also have shown that:
\begin{equation}\label{eq: approx 3 to 1_2}
\forall x,y\in\partial B,\dist_{\partial B}(x,y)-\dist_{L}(f(x),f(y))\leq (8+\pi)\epsilon.
\end{equation}
Thanks to \eqref{eq: approx 3 to 1_1} and \eqref{eq: approx 3 to 1_2}, we can conclude that $\Dis(f)\leq(8+\pi)\epsilon$.
\end{proof}

We now introduce a way to compare doubles of plane convex regions.

\begin{Lemma}[$2$ to $2$]\label{lem: 2 to 2}Let $
K,K'\in\mathcal{K}^s$ such that $\dim(K)=\dim(K')=2$ and assume that there exists $\epsilon\in(0,1)$ such that $(1-\epsilon)K\subset K'\subset (1+\epsilon) K$ and $(1-\epsilon)K'\subset K\subset (1+\epsilon) K'$ (in particular, $K$ and $K'$ are coplanar). We denote $f\colon\mathcal{D}K\to\mathcal{D}K'$ the map defined by $f(x^{\mathbf{i}})\coloneqq[(1-\epsilon)x]^{\mathbf{i}}$ (for $x\in\mathring{K}$ and $\mathbf{i}\in\{\mathbf{1},\mathbf{2}\}$) and $f(x)\coloneqq p_{\partial K'}^{\mathrm{c}}(x)$ (for $x\in\partial K$). We define $g\colon \mathcal{D}K'\to\mathcal{D}K$ in the same way by exchanging the roles of $K$ and $K'$. The pair $(f,g)$ is a GH $\nu$-approximation between $\DK$ and $\DK'$, where $\nu\coloneqq4(\Diam(K)+\Diam(K'))\epsilon$. Moreover, if $K,K'\in \tilde{\mathcal{K}}$, then $(f,g,\id)$ is an equivariant GH $\nu$-approximation between $(\DK,\{\pm1\})$ and $(\DK',\{\pm1\})$.
\end{Lemma}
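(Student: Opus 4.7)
The plan is to translate each required estimate into a Euclidean computation in the common linear span of $K$ and $K'$, then repackage via the infimum formula defining the double metric. Since $K, K' \in \mathcal{K}^s$ are $2$-dimensional, the defining property of the Steiner point places $0$ in the relative interior of each of them, and the inclusion hypotheses force $\Span(K) = \Span(K')$. Working in this common plane, the radial functions $r_K, r_{K'}$ associated with the central projections (see Notation \ref{not: central projection}) satisfy $(1-\epsilon) r_K \le r_{K'} \le (1+\epsilon) r_K$; this translates into the two scalar estimates
\begin{equation*}
\forall z \in \partial K,\ \de(z, p_{\partial K'}^{\mathrm{c}}(z)) \le \epsilon \min\{\Diam(K), \Diam(K')\}, \qquad \forall x \in K,\ \de(x, (1-\epsilon)x) \le \epsilon \Diam(K),
\end{equation*}
which will drive everything.

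Next, I would bound the distortion of $f$ by case analysis on the sheets of $\mathcal{D}K$. For $x, y \in \mathring{K}$ in the same sheet $\mathring{K}^{\mathbf{i}}$, both double metrics reduce to the ambient Euclidean one, and the discrepancy is exactly $\epsilon \de(x, y) \le \epsilon \Diam(K)$. For opposite sheets $x^{\mathbf{1}}, y^{\mathbf{2}}$, I would pick a nearly optimal $z_0 \in \partial K$ in the infimum defining $\dist_{\mathcal{D}K}(x^{\mathbf{1}}, y^{\mathbf{2}})$ and use $z'_0 := p_{\partial K'}^{\mathrm{c}}(z_0) \in \partial K'$ as a competitor on the $\mathcal{D}K'$ side; four applications of the triangle inequality, each absorbing a term of size at most $\epsilon(\Diam(K) + \Diam(K'))$ via the scalar estimates above, give the bound $\dist_{\mathcal{D}K'}(f(x^{\mathbf{1}}), f(y^{\mathbf{2}})) \le \dist_{\mathcal{D}K}(x^{\mathbf{1}}, y^{\mathbf{2}}) + \nu$, and swapping the roles of $K$ and $K'$ yields the reverse inequality. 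Mixed cases involving boundary points are handled identically, since $f$ displaces each boundary point by the same scalar amount.

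For the near-identity properties, a short radial argument shows that if $x \in \mathring{K}$ then $(1-\epsilon)x$ lies in the interior of $K'$ (otherwise the radial inequality forces $r_{K'}(u) = (1-\epsilon) r_K(u)$ in the direction $u$ of $x$, hence $x \in \partial K$); it follows that $g \circ f(x^{\mathbf{i}}) = [(1-\epsilon)^2 x]^{\mathbf{i}}$, with displacement $(2\epsilon - \epsilon^2)\de(x, 0) \le 2\epsilon \Diam(K)$. For $x \in \partial K$, the central projections $p_{\partial K}^{\mathrm{c}}$ and $p_{\partial K'}^{\mathrm{c}}$ are mutually inverse on the common ray through $x$, so $g \circ f(x) = x$ exactly; the estimate for $f \circ g$ is symmetric. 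For the equivariant assertion when $K, K' \in \tilde{\mathcal{K}}$, the identities $-(1-\epsilon)x = (1-\epsilon)(-x)$ and $p_{\partial K'}^{\mathrm{c}}(-z) = -p_{\partial K'}^{\mathrm{c}}(z)$ (the latter using $K' = -K'$), combined with the sheet-swapping definition of the action of $-1$, yield equivariance of $f$ and $g$ with trivial homomorphism.

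The main technical ingredient is the opposite-sheet case of the distortion estimate, because it requires comparing infima over two different boundaries $\partial K$ and $\partial K'$; the radial inequality $|r_K - r_{K'}| \le \epsilon \min\{\Diam(K), \Diam(K')\}$ is exactly what converts the global inclusion hypothesis into the pointwise comparison on $\partial K$ and $\partial K'$ strong enough to carry the argument through.
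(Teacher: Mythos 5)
Your proof is correct and takes essentially the same approach as the paper: both decompose the distortion estimate by sheets of the double, use the fact that the central projection $p_{\partial K'}^{\mathrm{c}}$ moves a boundary point $z\in\partial K$ by a radial factor $\lambda$ with $|1-\lambda|\leq\epsilon$, use $p_{\partial K'}^{\mathrm{c}}(z)$ as a competitor in the cross-sheet infimum, and compute $g\circ f$ explicitly as a radial contraction. The only minor quibble is that your displacement bound $\de(z,p_{\partial K'}^{\mathrm{c}}(z))\leq\epsilon\min\{\Diam(K),\Diam(K')\}$ is slightly overclaimed in the case $\lambda<1$ (you only get $\leq\epsilon\Diam(K)$ there without an extra $(1-\epsilon)^{-1}$ factor); the paper uses the one-sided bound $\leq\epsilon\Diam(K)$, which suffices and avoids the issue.
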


\begin{proof}First, assume that $x,y\in\mathrm{int}(K)$, and observe that:
$$
\lvert \dist_{\mathcal{D}K'}(f(x^{\mathbf{2}}),f(y^{\mathbf{2}}))-\dist_{\mathcal{D}K}(x^{\mathbf{2}},y^{\mathbf{2}})\rvert=\epsilon\de(x,y).
$$
We then assume that $y\in\partial K$ and observe that $p_{\partial K'}^{\mathrm{c}}(y)=\lambda y$ for some $\lambda>0$. However, $(1-\epsilon)y\in K'$; hence, $\lambda\geq(1-\epsilon)$. Observe that $(1+\epsilon)^{-1}p_{\partial K'}^{\mathrm{c}}(y)\in K$. In particular, we have $q_{K}((1+\epsilon)^{-1}p_{\partial K'}^{\mathrm{c}}(y))\leq1$ (where $q_{K}$ is the Minkowski gauge associated to $K$). However, since $y\in\partial K$, then $q_{K}(y)=1$. Hence, using $q_{K}((1+\epsilon)^{-1}p_{\partial K'}^{\mathrm{c}}(y))=(1+\epsilon)^{-1}\lambda q_{K}(y)=(1+\epsilon)^{-1}\lambda\leq 1$, we have $\lambda\leq1+\epsilon$. In particular, we have $\lvert 1-\lambda\rvert\leq\epsilon$. Therefore, given $x\in\mathrm{int}(K)$ we have:
\begin{align*}
\lvert \dist_{\mathcal{D}K'}(f(x^{\mathbf{2}}),f(y))-\dist_{\mathcal{D}K}(x^{\mathbf{2}},y)\rvert&=\lvert \de((1-\epsilon)x,p_{\partial K'}^{\mathrm{c}}(y))-\de(x,y)\rvert\\
&\leq \de((1-\epsilon)x,x)+\de(\lambda y,y)\\
&\leq \lvert 1-\lambda\rvert \lvert y\rvert+\epsilon\Diam(K)\\
&\leq2\epsilon\Diam(K).
\end{align*}
Proceeding the same way, we also have $\lvert \dist_{\mathcal{D}K'}(f(x^{\mathbf{1}}),f(y))-\dist_{\mathcal{D}K}(x^{\mathbf{1}},y)\rvert\leq2\epsilon\Diam(K)$. Finally, given $x,y\in\mathrm{int}(K)$, there exists $z\in\partial K$ such that $\dist_{\mathcal{D}K}(x^{\mathbf{2}},y^{\mathbf{1}})=\de(x,z)+\de(y,z)$. Hence, we have:
\begin{align*}
\dist_{\mathcal{D}K'}(f(x^{\mathbf{2}}),f(y^{\mathbf{1}}))&\leq\dist_{\mathcal{D}K'}(f(x^{\mathbf{2}}),f(z))+\dist_{\mathcal{D}K'}(f(z),f(y^{\mathbf{1}}))\\
&\leq 4\epsilon\Diam(K)+\de(x,z)+\de(z,y)\\
&=4\epsilon\Diam(K)+\dist_{\mathcal{D}K}(x^{\mathbf{2}},y^{\mathbf{1}}).
\end{align*}
Moreover, there exists $z'\in\partial K'$ such that $\dist_{\mathcal{D}K'}(f(x^{\mathbf{2}}),f(y^{\mathbf{1}}))=\dist_{\mathcal{D}K'}(f(x^{\mathbf{2}}),z')+\dist_{\mathcal{D}K'}(z',f(y^{\mathbf{1}}))$. In addition, denoting $z''\coloneqq p_{\partial K}^{\mathrm{c}}(z')$, we have $z'=f(z'')$. Therefore, we obtain:
\begin{align*}
\dist_{\mathcal{D}K'}(f(x^{\mathbf{2}}),f(y^{\mathbf{1}}))&=\dist_{\mathcal{D}K'}(f(x^{\mathbf{2}}),f(z''))+\dist_{\mathcal{D}K'}(f(z''),f(y^{\mathbf{1}}))\\
&\geq\dist_{\mathcal{D} K}(x^{\mathbf{2}},z'')+\dist_{\mathcal{D}K}(z'',y^{\mathbf{1}})-4\epsilon\Diam(K)\\
&\geq \dist_{\mathcal{D}K}(x^{\mathbf{2}},y^{\mathbf{1}})-4\epsilon\Diam(K).
\end{align*}
Hence, we have shown that $\Dis(f)\leq4\epsilon\Diam(K)$. We show in the same way that $\Dis(g)\leq4\Diam(K')\epsilon$.\\
Now, observe that given $x\in K$ and $\mathbf{i}\in\{\mathbf{1},\mathbf{2}\}$, we have $\dist(x,g\circ f (x))\leq(1-(1-\epsilon)^2)\lvert x\rvert\leq2\Diam(K)\epsilon$. The same holds replacing $f$ by $g$ and exchanging the roles of $K$ and $K'$. Hence, $(f,g)$ is a GH $\nu$-approximation between $\DK$ and $\DK'$.\\
Finally, it is clear that $f$ and $g$ are equivariant whenever $K,K'\in\tilde{\mathcal{K}}_{2\leq2}$.
\end{proof}

The following result shows how to compare the double of a plane convex region with its projection onto a line.

\begin{Lemma}[$2$ to $1$]\label{lem: 2 to 1}Let $K\in\mathcal{K}^s$ such that $\dim(K)=2$, let $v\in\Span(K)\backslash\{0\}$, and let $L\coloneqq p_v(K)$. If we denote $f\colon \mathcal{D}K\to L$ the map defined by $f(x^{\mathbf{i}})\coloneqq p_v(x)$ (for $x\in K$ and $\mathbf{i}\in\{\mathbf{1},\mathbf{2}\}$), then we have $\Dis(f)\leq4\epsilon$, where $\epsilon\coloneqq\sup_{x\in K}\{\de(x,p_v(x))\}$.
\end{Lemma}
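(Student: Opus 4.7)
The plan is to estimate the distortion $|\dist_{\mathcal{D}K}(a,b)-\de(f(a),f(b))|$ by splitting on whether $a,b\in\mathcal{D}K$ lie on the same sheet of the double or on opposite sheets. To set up the computation, I would fix an orthonormal basis $(v/|v|,w)$ of $\Span(K)$ and write every $x\in K$ uniquely as $x=p_v(x)+t_x w$ with $|t_x|=\de(x,p_v(x))\leq\epsilon$. The image $L=p_v(K)$ is a compact segment inside $\R v$, on which $\dist_L$ coincides with the Euclidean distance $\de$.

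In the same-sheet case $a=x^{\mathbf{i}}$, $b=y^{\mathbf{i}}$, convexity of $K$ gives $\dist_{\mathcal{D}K}(x^{\mathbf{i}},y^{\mathbf{i}})=\de(x,y)$, and the Pythagorean identity $\de(x,y)^{2}=\de(p_v(x),p_v(y))^{2}+(t_x-t_y)^{2}$ combined with $|t_x-t_y|\leq 2\epsilon$ immediately yields $0\leq \de(x,y)-\de(p_v(x),p_v(y))\leq 2\epsilon$.

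The opposite-sheet case $a=x^{\mathbf{1}}$, $b=y^{\mathbf{2}}$ requires a little more care. By the definition of the double metric, $\dist_{\mathcal{D}K}(x^{\mathbf{1}},y^{\mathbf{2}})=\inf_{z\in\partial K}\{\de(x,z)+\de(z,y)\}$, and the lower bound $\dist_{\mathcal{D}K}(x^{\mathbf{1}},y^{\mathbf{2}})\geq \de(p_v(x),p_v(y))$ follows from the triangle inequality combined with the fact that $p_v$ is $1$-Lipschitz. For the matching upper bound, the key observation I would use is that $p_v(\partial K)=L$: indeed, every non-empty fiber $K\cap(sv+\R w)$ is a closed segment whose extreme points in the $\pm w$ direction necessarily lie on $\partial K$. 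This lets me pick $z\in\partial K$ with $p_v(z)=p_v(x)$, which forces $\de(x,z)=|t_x-t_z|\leq 2\epsilon$, and then Pythagoras combined with $|t_z-t_y|\leq 2\epsilon$ gives $\de(z,y)\leq \de(p_v(x),p_v(y))+2\epsilon$. Summing these two estimates bounds $\dist_{\mathcal{D}K}(x^{\mathbf{1}},y^{\mathbf{2}})-\de(p_v(x),p_v(y))$ by $4\epsilon$.

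Combining the two cases yields $\Dis(f)\leq 4\epsilon$, as required. No serious obstacle is expected; the only subtlety is the choice of the intermediate boundary point $z$ in the opposite-sheet case, which must be chosen so that both $\de(x,z)$ and $\de(z,y)$ can simultaneously be controlled by $\epsilon$ and $\de(p_v(x),p_v(y))$, and the identity $p_v(\partial K)=L$ is exactly what makes such a choice possible.
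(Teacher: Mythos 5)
Your proposal is correct and follows essentially the same strategy as the paper's proof: control the same-sheet distortion by $2\epsilon$, establish the lower bound in the opposite-sheet case from the $1$-Lipschitz property of $p_v$, and obtain the matching upper bound by inserting a well-chosen boundary point into a concatenated path. The only cosmetic difference is the choice of that boundary point: you take $z\in\partial K$ in the fiber over $p_v(x)$, while the paper takes it in the fiber over the midpoint $(p_v(x)+p_v(y))/2$; both choices deliver the same $4\epsilon$ bound.
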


\begin{proof}Note first that if $x,y\in K$ and if $\mathbf{i}\in\{\mathbf{1},\mathbf{2}\}$, then:
\begin{align*}
\lvert \dist_{L}(f(x^{\mathbf{i}}),f(y^{\mathbf{i}}))-\dist_{\mathcal{D}K}(x^{\mathbf{i}},y^{\mathbf{i}})\rvert &=\lvert \de(p_{v}(x),p_{v}(y))-\de(x,y)\rvert\\
&\leq\de(x,p_{v}(x))+\de(y,p_{v}(y))\\
&\leq 2\epsilon.
\end{align*}
Now, assume that $x,y\in K$ and let $z\in\partial K$ such that $\dist_{\mathcal{D}K}(x^{\mathbf{2}},y^{\mathbf{1}})=\de(x,z)+\de(y,z)$. Note that we have the following inequalities:
\begin{align*}
\dist_{L}(f(x^{\mathbf{2}}),f(y^{\mathbf{1}}))&=\de(p_{v}(x),p_{v}(y))\\
&\leq \de(x,y)\\
&\leq\de(x,z)+\de(z,y)=\dist_{\mathcal{D}K}(x^{\mathbf{2}},y^{\mathbf{1}}).
\end{align*}
After that, let $\beta\in\Span(K)$ such that $\{v,\beta\}$ is an orthonormal basis of $\Span(K)$ (we can assume without loss of generality that $v$ is unitary). Let $s\in[-\epsilon,\epsilon]$ be chosen such that $z'\coloneqq (p_{v}(x)+p_{v}(y))/2+s\beta\in\partial K$. Note that we have:
\begin{align*}
\de(x,z')&\leq\de(x,p_{v}(x))+\de(z',p_{v}(z'))+\de(p_{v}(x),p_{v}(z'))\\
&\leq 2\epsilon+\de(p_{v}(x),p_{v}(z')),
\end{align*}
and, proceeding the same way, we have $\de(y,z')\leq2\epsilon+\de(p_{v}(z'),p_{v}(y))$. Hence, we have:
\begin{align*}
\dist_{\mathcal{D}K}(x^{\mathbf{2}},y^{\mathbf{1}})&\leq \de(x,z')+\de(y,z')\\
&\leq 4\epsilon+\de(p_{v}(x),p_{v}(z'))+\de(p_{v}(z'),p_{v}(y))\\
&\leq4\epsilon+\de(p_{v}(x),p_{v}(y))=4\epsilon+\dist_{L}(f(x^{\mathbf{2}}),f(y^{\mathbf{1}})),
\end{align*}
where we used the fact that $p_{v}(z')\in[p_{v}(x),p_{v}(y)]$. Therefore, we can conclude that $\Dis(f)\leq4\epsilon$.
\end{proof}

The next lemma is trivial but will be needed for completeness.

\begin{Lemma}[1 to 1]\label{lem: 1 to 1}Let $
L,L'\in\mathcal{K}^s$ such that $\dim(L)=\dim(L')=1$ and assume that there exists $\epsilon\in(0,1)$ such that $(1-\epsilon)L\subset L'\subset (1+\epsilon) L$ and $(1-\epsilon)L'\subset L\subset (1+\epsilon) L'$ (in particular, $L$ and $L'$ are collinear). We denote $f\colon L\to L'$ the map defined by $f(x)\coloneqq(1-\epsilon)x$. We define $g\colon L'\to L$ in the same way by exchanging the roles of $L$ and $L'$. The pair $(f,g)$ is a GH $\nu$-approximation between $L$ and $L'$, where $\nu\coloneqq2(\Diam(L)+\Diam(L'))\epsilon$.
\end{Lemma}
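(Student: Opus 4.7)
The plan is to reduce everything to elementary one-dimensional geometry. First I would observe that any element of $\mathcal{K}^s$ of dimension $1$ is a closed segment whose Steiner point is its midpoint, so $L$ and $L'$ are segments centered at the origin. Next, the inclusions $(1-\epsilon)L\subset L'\subset(1+\epsilon)L$ force $\Span(L)=\Span(L')$, so $L$ and $L'$ lie on a common line through the origin. In particular, $f(x)=(1-\epsilon)x$ does map $L$ into $L'$ (by the inclusion $(1-\epsilon)L\subset L'$), and symmetrically for $g$; both maps are well defined.

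Then I would check the four clauses defining a GH $\nu$-approximation. For the distortion of $f$, for any $x,y\in L$ one has
\[
\bigl|\dist_{L'}(f(x),f(y))-\dist_L(x,y)\bigr|=\bigl|(1-\epsilon)\de(x,y)-\de(x,y)\bigr|=\epsilon\,\de(x,y)\leq\epsilon\Diam(L),
\]
and the analogous bound $\epsilon\Diam(L')$ holds for $g$. For the defect, since $L$ is centered at the origin we have $|x|\leq\Diam(L)/2$ for $x\in L$, whence
\[
\de(g\circ f(x),x)=\bigl(1-(1-\epsilon)^2\bigr)|x|\leq(2\epsilon-\epsilon^2)\tfrac{\Diam(L)}{2}\leq\epsilon\Diam(L),
\]
and symmetrically $\de(f\circ g(x),x)\leq\epsilon\Diam(L')$ for $x\in L'$. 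All four quantities are bounded by $\nu=2(\Diam(L)+\Diam(L'))\epsilon$, which proves the claim.

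There is no real obstacle here; the lemma is the degenerate $1$-dimensional analogue of Lemma \ref{lem: 2 to 2}, and is included for completeness so that the approximation toolbox covers every pair of dimensions that arises in the proofs of continuity for $\Psi_{\mathbb{S}^2}^{\mathrm{eq}}$ and $\Psi_{\mathbb{D}}$. The only points worth being careful about are the reduction to collinear segments via the Steiner point and the inclusion hypothesis, and the use of the centering at the origin to turn a bound on $|x|$ into a bound involving $\Diam(L)$.
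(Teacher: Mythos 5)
The paper states this lemma without proof, explicitly calling it ``trivial but needed for completeness,'' so there is no argument in the text to compare against. Your proof is correct and is exactly the computation the authors have in mind: the Steiner point of a segment is its midpoint, the inclusion hypothesis forces $L$ and $L'$ to be collinear segments centered at the origin, and then the distortion and defect bounds follow by the elementary scaling estimate $|1-(1-\epsilon)^2|\,|x|\le 2\epsilon|x|\le\epsilon\Diam(L)$; all four quantities are comfortably below $\nu=2(\Diam(L)+\Diam(L'))\epsilon$.
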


The following result treats the case where a convex compact subset of $\R^3$ collapses to a point.

\begin{Lemma}[Collapsing case]\label{lem: collapsing case} If $D\in\mathcal{K}^s$, then $\Diam(\Phi_{\mathbb{S}^2}(D))\leq\pi\Diam(D)$ (see Notation \ref{not: phi S2}).
\end{Lemma}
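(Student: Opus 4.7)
The plan is to treat each case in the definition of $\Phi_{\mathbb{S}^2}(D)$ separately.

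If $\dim(D)\in\{0,1\}$, then $\Phi_{\mathbb{S}^2}(D)=D$ is either a point or a line segment, so its intrinsic metric coincides with the Euclidean one and the inequality $\Diam(\Phi_{\mathbb{S}^2}(D))=\Diam(D)\leq\pi\Diam(D)$ is trivial. If $\dim(D)=2$, then $\Phi_{\mathbb{S}^2}(D)=\mathcal{D}D$. Given $x,y\in D$, the intrinsic distance on each sheet $D^{\mathbf{i}}$ coincides with $\de$ (by convexity of $D$), so $\mathcal{D}\dist(x^{\mathbf{i}},y^{\mathbf{i}})\leq\Diam(D)$. For the mixed case, picking any $z\in\partial D$ and using the definition of $\mathcal{D}\dist$ in Notation \ref{not: double of K} yields $\mathcal{D}\dist(x^{\mathbf{1}},y^{\mathbf{2}})\leq \de(x,z)+\de(z,y)\leq2\Diam(D)\leq\pi\Diam(D)$.

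The main case is $\dim(D)=3$, where $\Phi_{\mathbb{S}^2}(D)=\partial D$ carries its intrinsic length metric. Given $x,y\in\partial D$, the plan is to slice $D$ by a well-chosen plane: since $D$ is $3$-dimensional, there exists an affine plane $P$ passing through $x$ and $y$ such that $P\cap\mathrm{int}(D)\neq\varnothing$. The section $D\cap P$ is then a $2$-dimensional compact convex subset of $P$ whose boundary (relative to $P$) satisfies $\partial(D\cap P)\subset\partial D$ (indeed, any point in $\mathrm{int}(D)\cap P$ has a planar neighborhood inside $D\cap P$, so it is a relatively interior point). The closed curve $\partial(D\cap P)$ passes through both $x$ and $y$ and has length equal to the perimeter of $D\cap P$, which is bounded by $\pi\Diam(D\cap P)\leq\pi\Diam(D)$ by the classical perimeter estimate for planar convex bodies. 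The shorter of the two arcs joining $x$ to $y$ along $\partial(D\cap P)$ therefore has length at most $(\pi/2)\Diam(D)$, and since this arc lies in $\partial D$, we obtain $\dist_{\partial D}(x,y)\leq(\pi/2)\Diam(D)\leq\pi\Diam(D)$.

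The only mildly delicate step is guaranteeing the existence of the plane $P$ above and the inclusion $\partial(D\cap P)\subset\partial D$; both are elementary facts from convex geometry, and the latter can be verified by a direct neighborhood argument. The perimeter bound for planar convex bodies is standard (it follows, for instance, from Cauchy's formula for the perimeter), so no further work is needed.
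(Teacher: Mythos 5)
Your proof is correct and follows the same case-by-dimension decomposition as the paper; the cases $\dim(D)\in\{0,1,2\}$ are handled identically. The only real difference is in the $3$-dimensional case: the paper simply cites the bound $\Diam(\partial D)\leq\pi\Diam(D)$ from the proof of Lemma~10.2.7 in Burago--Burago--Ivanov, whereas you unpack that citation with a self-contained slicing argument (cut $D$ by a plane $P$ through $x$ and $y$ that meets $\mathrm{int}(D)$, bound the perimeter of the planar section by $\pi\Diam(D)$, take the shorter arc). Your argument is in the same spirit as the cited proof and in fact yields the slightly sharper constant $\pi/2$ for the $3$-dimensional case. One small omission: you assert that the curve $\partial_{\mathrm{rel}}(D\cap P)$ passes through $x$ and $y$, but it still needs to be checked that $x,y\in\partial_{\mathrm{rel}}(D\cap P)$ rather than in the relative interior of $D\cap P$. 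This is true and easy — $P$ is not a supporting plane of $D$ at $x$ (since it meets $\mathrm{int}(D)$), so $P$ crosses the supporting hyperplane at $x$ and hence contains points arbitrarily near $x$ outside $D$ — but it deserves a sentence. With that added, the proof is complete.
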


\begin{proof}First of all, note that the result is trivial if $\dim(D)\in\{0,1\}$. If $\dim(D)=2$, it is clear from the definition of the double of a metric space that $\Diam(\mathcal{D}D)\leq2\Diam(D)$. Finally, if $\dim(D)=3$, then we have $\Diam(\partial D)\leq\pi\Diam(D)$ (see the proof of Lemma 10.2.7 in \cite{Burago-Ivanov_01}).
\end{proof}

Given a sequence $D_n\to D_{\infty}$ in $\mathcal{K}^s$ with $\dim(D_n)=2$ (for $n\in\N\cup\{\infty\}$), there is not necessarily any $\epsilon_n\to0$ such that $(1-\epsilon_n)D_{\infty}\subset D_n\subset (1+\epsilon_n)D_{\infty}$. Indeed, this would hold only if we had $D_n\subset\Span(D_{\infty})$ when $n$ is large enough. The following lemma is going to help us fix this issue.

\begin{Lemma}\label{lem: orthogonal}If that $D_n\to D_\infty$ in $\mathcal{K}^s$ such that $\dim(D_{\infty})\leq\dim(D_n)$ (for every $n\in\N$), then there exists $\phi_n\to\id_{\R^3}$ in $\Or_3(\R)$ such that $\phi_n^{-1}(\Span(D_{\infty}))\subset\Span(D_{n})$. In addition, if $(D_n,\alpha_n)\to(D_{\infty},\alpha_{\infty})$ in $\mathscr{K}$ (see Notation \ref{not: moduli space of the closed disc}), we can also ask $\{\phi_n\}$ to satisfy $\phi_n(\alpha_n)=\alpha_{\infty}$.
\end{Lemma}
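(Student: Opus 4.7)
The plan is to construct, for each $n$, a subspace $V_n\subset\Span(D_n)$ with $\Dim(V_n)=\Dim(\Span(D_\infty))\eqqcolon k'$ such that $V_n$ converges to $\Span(D_\infty)$ in the Grassmannian $\mathrm{Gr}(k',3)$; once this is done, transitivity of the action of $\Or_3(\R)$ on $\mathrm{Gr}(k',3)$ and continuity of the quotient $\Or_3(\R)\to\mathrm{Gr}(k',3)$ provide $\phi_n\in\Or_3(\R)$ with $\phi_n\to\id_{\R^3}$ and $\phi_n^{-1}(\Span(D_\infty))=V_n\subset\Span(D_n)$. Concretely one can complete an orthonormal basis of $\Span(D_\infty)$ to one of $\R^3$, apply Gram--Schmidt to an approximation of that basis in which the first $k'$ vectors span $V_n$, and define $\phi_n^{-1}$ to carry one basis to the other.

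To construct $V_n$, the natural candidate is $V_n\coloneqq P_n(\Span(D_\infty))$, where $P_n\colon\R^3\to\Span(D_n)$ is the orthogonal projection. The key claim is that $\Dim(V_n)=k'$ for large $n$ and $V_n\to\Span(D_\infty)$. For this, let $w_n$ be unit vectors in $\Span(D_n)^\perp$ with $w_n\to w$ along a subsequence; for any $x\in D_\infty$ Hausdorff convergence gives $x_n\in D_n\subset\Span(D_n)$ with $x_n\to x$, so $\langle w,x\rangle=\lim\langle w_n,x_n\rangle=0$. Since $\Span(D_\infty)=\Span(D_\infty\cup\{0\})$, this forces $w\in\Span(D_\infty)^\perp$. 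By compactness, the principal angles between $\Span(D_n)^\perp$ and $\Span(D_\infty)^\perp$ tend to $0$, hence for $v\in\Span(D_\infty)$ one has $\|v-P_n(v)\|=\|P_{\Span(D_n)^\perp}(v)\|\to0$ uniformly on the unit sphere of $\Span(D_\infty)$. This gives both the injectivity of $P_n|_{\Span(D_\infty)}$ for large $n$ and the Grassmannian convergence $V_n\to\Span(D_\infty)$, proving the main claim.

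For the additional condition $\phi_n(\alpha_n)=\alpha_\infty$ when $(D_n,\alpha_n)\to(D_\infty,\alpha_\infty)$ in $\mathscr{K}$, I first pick $R_n\in\Or_3(\R)$ with $R_n(\alpha_n)=\alpha_\infty$ and $R_n\to\id$ (the rotation in the plane $\Span(\alpha_n,\alpha_\infty)$ fixing its orthogonal complement). Because $r_{\alpha_n}(D_n)=D_n$ and $r_{\alpha_\infty}(D_\infty)=D_\infty$, the subspace $\Span(D_n)$ (resp.\ $\Span(D_\infty)$) splits orthogonally as $(\Span(D_n)\cap\R\alpha_n)\oplus(\Span(D_n)\cap\alpha_n^\perp)$, and the same for $D_\infty$. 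I then apply the projection construction of paragraph two \emph{separately} in the two factors: for the $\R\alpha$ component this is automatic through $R_n$, and for the $\alpha^\perp$ component I project $R_n(\Span(D_\infty)\cap\alpha_\infty^\perp)\subset\alpha_n^\perp$ onto $\Span(D_n)\cap\alpha_n^\perp$, obtaining a subspace $V_n^{\perp}$ of $\alpha_n^\perp\cap\Span(D_n)$. Finally I post-compose $R_n$ with a small orthogonal adjustment $\id_{\R\alpha_n}\oplus S_n$ (with $S_n\in\Or(\alpha_n^\perp)$ close to $\id$) chosen so that $(\id\oplus S_n)^{-1}(\Span(D_\infty)\cap\alpha_\infty^\perp)=V_n^\perp$; this preserves $\alpha_n\mapsto\alpha_\infty$ and achieves the span inclusion.

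The main obstacle is verifying that the decomposition-based construction of paragraph three is \emph{well defined}, i.e.\ that the target subspaces $\Span(D_n)\cap\R\alpha_n$ and $\Span(D_n)\cap\alpha_n^\perp$ have, for large $n$, the dimensions dictated by the limit configuration. This is ensured by ruling out incompatible configurations via Remark \ref{rem: condition on alpha}: for example, if $\alpha_\infty\in\Span(D_\infty)$ but $\alpha_n\perp\Span(D_n)$ along a subsequence, then $D_n\subset\alpha_n^\perp$, and passing to the Hausdorff limit yields $D_\infty\subset\alpha_\infty^\perp$, which contradicts the fact that $\alpha_\infty$ lies in the nonzero subspace $\Span(D_\infty)$; symmetric arguments (together with the lower semicontinuity of dimension under Hausdorff convergence, which follows from the projection argument in paragraph two) eliminate the other mismatched configurations. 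Since the discrete data $(\dim(D_n),\text{position of }\alpha_n)$ takes finitely many values, one may reduce to a constant subsequence, do the construction there, and reassemble the full sequence $\phi_n\to\id$ from the resulting finite family of subsequential constructions.
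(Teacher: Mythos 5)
Your proposal is correct and, for the first assertion, is essentially the same argument as the paper's: both amount to producing a $k'$-dimensional subspace $V_n\subset\Span(D_n)$ converging to $\Span(D_\infty)$ and then mapping one orthonormal frame to a nearby one. The only difference in flavour is that you take $V_n:=p_{\Span(D_n)}(\Span(D_\infty))$ and argue via principal angles / Grassmannian convergence, whereas the paper picks concrete points $u_i^n\in D_n$ close to $r\,w_i$ (using that a small relative ball around the Steiner point lies in $D_\infty$) and applies Gram--Schmidt to them; these are interchangeable.

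Where you genuinely diverge is in the second assertion. The paper's trick is to \emph{build} the condition $\phi_n(\alpha_n)=\alpha_\infty$ into the frame choice from the start: it arranges $\alpha_\infty=w_i$ for the correct index $i$ (which position $i$ takes is dictated by the case analysis on whether $\alpha_\infty\in\Span(D_\infty)$ or $\alpha_\infty\perp\Span(D_\infty)$) and chooses the Gram--Schmidt input so that $v_i^n=\alpha_n$. You instead construct an unconstrained $\phi_n$ first and then \emph{correct} it by exploiting the $r_{\alpha}$-invariant splitting $\Span(D)=(\Span(D)\cap\R\alpha)\oplus(\Span(D)\cap\alpha^\perp)$. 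This works, and it has the virtue of making the role of the reflection symmetry explicit, but it forces you to track dimension compatibility of the two summands along subsequences, which the paper avoids. Two slips to fix in your write-up: since $R_n(\alpha_n)=\alpha_\infty$, it is $R_n^{-1}(\Span(D_\infty)\cap\alpha_\infty^\perp)$, not $R_n(\cdots)$, that lands in $\alpha_n^\perp$; and the correction $\id_{\R\alpha_n}\oplus S_n$ must be \emph{pre}-composed (i.e.\ $\phi_n=R_n\circ(\id\oplus S_n)$), since it fixes $\alpha_n$, not $\alpha_\infty$ --- post-composing would break $\phi_n(\alpha_n)=\alpha_\infty$. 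With those corrections, and spelling out that the discrete case ``$\alpha_n\in\Span(D_n)$ versus $\alpha_n\perp\Span(D_n)$'' must eventually stabilize compatibly with $\alpha_\infty$ (which you indicate but only sketch), the argument goes through.
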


\begin{proof}First of all, we assume that $1\leq k\coloneqq\dim(D_{\infty})$ (the case $k=0$ being trivial). We then let $\{w_i\}_{i=1}^3$ be an oriented orthonormal basis of $\R^3$ such that $\Span(D_{\infty})=\Span(\{w_i\}_{i=1}^k)$. Let $r>0$ such that $B_r(0)\cap\Span(D_{\infty})\subset D_{{\infty}}$ (such an $r>0$ always exists since the Steiner point of $D_{\infty}$ is at the origin and belong to the relative interior of $D_{\infty}$). For every $n\in\N$ and $i\in\N\cap[1,k]$, there exists $u_i^n\in D_n$ such that $\de(u_i^n,rw_i)\leq\epsilon_n\coloneqq\dha(D_n,D_{\infty})\to0$. We can then apply the Gram--Schmidt orthonormalisation process to the family $\{u_i^n\}_{i=1}^k$ and get $\{v_i^n\}_{i=1}^k$ such that $\Span(\{v_i^n\}_{i=1}^k)\subset\Span(D_n)$ and, for $i\in\N\cap[1,k]$, $v_i^n\to w_i$.\\
Let us construct $\{v_i^n\}_{k<i}$ such that $\{v_i^n\}_{i=1}^3$ is an orthonormal basis of $\R^3$ and for $k<i$ we have $v_i^n\to w_i$. If $k=3$ we are done already. If $k=2$, we can just define $v_3^n\coloneqq v_1^n\wedge v_2^n$. Let us now assume that $k=1$. In that case, whenever $n$ is large enough, $p_{v_1^n}^{\perp}\colon\{v_1\}^{\perp}\to\{v_1^n\}^{\perp}$ is an isomorphism. We can then define $u_i^n\coloneqq p_{v_1^n}^{\perp}(w_i)$ for $i\in\{2,3\}$. Applying the Gram--Schmidt orthonormalisation process to the family $\{u_2^n,u_3^n\}$ gives rise to a family $\{v_2^n,v_3^n\}$ satisfying the desired properties.\\
To conclude the first part of the proof, let $\phi_n\in\Or_3(\R)$ such that $\phi_n(v_i^n)=w_i$ ($i\in\{1,2,3\}$) and observe that $\phi_n$ satisfies the desired properties by construction.\\
Now, let us assume that $(D_n,\alpha_n)\to(D_{\infty},\alpha_{\infty})$ in $\mathscr{K}$. It is readily checked, proceeding case by case (and remembering that $(D_n,\alpha_n)\in\mathscr{K}$ implies either $\alpha_n\in\Span(D_n)$ or $\alpha_n\perp\Span(D_n)$), that we can construct $\{v_i^n\}$ and $\{w_i\}$ so that $\alpha_{\infty}=w_i$ for some $i\in\{1,2,3\}$, and $v_i^n=\alpha_n$ for every $n\in\N$. This concludes the proof.
\end{proof}

\subsubsection{Moduli spaces of nonnegatively curved metrics}

We have seen in Theorem \ref{th: homeo S2} that $\mathscr{M}_{\mathrm{curv}\geq0}(\mathbb{S}^2)$ is homeomorphic to $\Ks/\Or_3(\R)$. We are now going to prove results in the same spirit for $\mathscr{M}_{\mathrm{curv}\geq0}^{\mathrm{eq}}(\mathbb{S}^2)$ and $\mathscr{M}_{\mathrm{curv}\geq0}(\mathbb{D})$.

\begin{Proposition}\label{prop: RP2 continuity}The map $\Psi_{\mathbb{S}^2}^{\mathrm{eq}}\colon\tK/\Or_3(\R)\to\mathscr{M}^{\mathrm{eq}}_{\mathrm{curv\geq0}}(\mathbb{S}^2)$ introduced in \eqref{eq: homeo disc} is a homeomorphism.
\end{Proposition}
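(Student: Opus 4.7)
The plan is to establish bicontinuity of $\Psi_{\mathbb{S}^2}^{\mathrm{eq}}$, since Proposition \ref{prop: realization RP2} already gives bijectivity. I would tackle the two directions separately and expect the forward direction (continuity of $\Psi_{\mathbb{S}^2}^{\mathrm{eq}}$) to be the substantive one, relying on the approximation lemmas of Section \ref{sec: Approximation Lemmas}.

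For continuity of the inverse I would use Theorem \ref{th: homeo S2} as a black box. Assume $\Psi_{\mathbb{S}^2}^{\mathrm{eq}}([D_n])\to\Psi_{\mathbb{S}^2}^{\mathrm{eq}}([D_\infty])$ in $\mathscr{M}^{\mathrm{eq}}_{\mathrm{curv}\geq0}(\mathbb{S}^2)$. Forgetting the $\{\pm1\}$-action yields convergence in $\mathscr{M}_{\mathrm{curv}\geq0}(\mathbb{S}^2)$, and Theorem \ref{th: homeo S2} upgrades this to $[D_n]\to[D_\infty]$ in $\mathcal{K}^s_{2\leq3}/\Or_3(\R)$. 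Picking $\phi_n\in\Or_3(\R)$ with $\phi_n(D_n)\to D_\infty$ in $\dha$, the fact that every element of $\Or_3(\R)$ commutes with the antipodal map $-\id_{\R^3}$ gives $\phi_n(D_n)\in\tK$, hence $[D_n]\to[D_\infty]$ in $\tK/\Or_3(\R)$.

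For continuity of $\Psi_{\mathbb{S}^2}^{\mathrm{eq}}$ I would fix $[D_n]\to[D_\infty]$ in $\tK/\Or_3(\R)$ and, by compactness of $\Or_3(\R)$, choose representatives with $D_n\to D_\infty$ in $\dha$. Using a subsequence argument, it suffices to extract from any subsequence a further subsequence along which $\dim(D_n)$ is constant; by lower semicontinuity of dimension under Hausdorff convergence of convex bodies, $\dim(D_\infty)\leq\dim(D_n)$, so since $\tK\subset\mathcal{K}^s_{2\leq3}$ only the three configurations ($3$ to $3$), ($3$ to $2$), ($2$ to $2$) can occur, matching Lemmas \ref{lem: 3 to 3}, \ref{lem: 3 to 2} and \ref{lem: 2 to 2}. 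In ($3$ to $3$), $\Span(D_n)=\Span(D_\infty)=\R^3$ automatically, so Hausdorff convergence yields $\epsilon_n\to0$ with $(1-\epsilon_n)D_\infty\subset D_n\subset(1+\epsilon_n)D_\infty$ and vice versa; Lemma \ref{lem: 3 to 3} supplies equivariant GH $\nu_n$-approximations between $\partial D_n$ and $\partial D_\infty$ with $\nu_n\to 0$. In ($2$ to $2$) I would first apply Lemma \ref{lem: orthogonal} to obtain $\phi_n\to\id_{\R^3}$ in $\Or_3(\R)$ aligning $\Span(\phi_n(D_n))=\Span(D_\infty)$, keeping central symmetry intact, and then apply Lemma \ref{lem: 2 to 2}.

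The main obstacle is the ($3$ to $2$) case, where the target dimension drops; this is exactly what Lemma \ref{lem: 3 to 2} is designed for. Pick any unit vector $v\in\Span(D_\infty)^\perp$ and set $K_n\coloneqq p_v^\perp(D_n)$. Since $D_n=-D_n$ one has $K_n\in\tK$. Lemma \ref{lem: 3 to 2} gives equivariant GH approximations between $\partial D_n$ and $\mathcal{D}K_n$ whose error is bounded by $10\sup_{x\in D_n}\de(x,p_v^\perp(x))$, which tends to $0$ because $D_n\to D_\infty\subset\{v\}^\perp$. Moreover $K_n\to D_\infty$ in $\dha$ inside $\{v\}^\perp$, so Lemma \ref{lem: 2 to 2} yields equivariant GH approximations between $\mathcal{D}K_n$ and $\mathcal{D}D_\infty=\Phi_{\mathbb{S}^2}(D_\infty)$. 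Composing the two families produces the required equivariant GH approximation between $\Phi_{\mathbb{S}^2}(D_n)=\partial D_n$ and $\Phi_{\mathbb{S}^2}(D_\infty)$, concluding the proof. The most delicate point throughout is that every approximation produced must be equivariant with respect to the $\{\pm1\}$-action, which is why the lemmas in Section \ref{sec: Approximation Lemmas} explicitly record equivariance whenever the inputs are centrally symmetric.
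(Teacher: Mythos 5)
Your proof is correct and follows essentially the same strategy as the paper's: continuity of the inverse via Theorem \ref{th: homeo S2}, and for the forward direction a reduction to the approximation lemmas of Section \ref{sec: Approximation Lemmas}, with Lemma \ref{lem: orthogonal} supplying the rotations needed to align spans. The paper organizes things slightly differently (it applies Lemma \ref{lem: orthogonal} unconditionally and runs a single two-step chain through the auxiliary bodies $D_n'$ and $D_n''$, rather than making your explicit three-way case split), and it invokes the modified triangle inequality \eqref{eq: modified triangle inequality} to assemble the two intermediate estimates into the final convergence --- a point your ``composing the two families'' glosses over, since $\mathcal{D}^{\mathrm{eq}}$ only satisfies the weakened triangle inequality --- but the substance is the same.
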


\begin{proof}We have already seen in Proposition \ref{prop: realisation RP2} that $\Psi_{\mathbb{S}^2}^{\mathrm{eq}}$ is bijective; therefore, we just need to prove that $\Psi_{\mathbb{S}^2}^{\mathrm{eq}}$ and $\{\Psi_{\mathbb{S}^2}^{\mathrm{eq}}\}^{-1}$ are continuous. However, observe that convergence in the equivariant GH topology implies convergence in the GH topology. Thus, thanks to Theorem \ref{th: homeo S2}, $\{\Psi_{\mathbb{S}^2}^{\mathrm{eq}}\}^{-1}$ is continuous.\\
Let $D_n\to D_{\infty}$ in $\tK$ and let us prove that $\mathcal{D}^{\mathrm{eq}}(\Phi_{\mathbb{S}^2}(D_n),\Phi_{\mathbb{S}^2}(D_{\infty}))\to0$ (see Remark \ref{remark: equivariant GH topology} for the defintion of $\mathcal{D}^{\mathrm{eq}}$), which proves $\Psi_{\mathbb{S}^2}^{\mathrm{eq}}$'s continuity. Since the dimension of compact convex sets is lower semi-continuous with respect to the Hausdorff distance, we can assume that for every $n\in\N$, we have $\dim(D_{\infty})\leq\dim(D_n)$ (forgetting the first terms of the sequence if necessary). Thanks to Lemma \ref{lem: orthogonal}, there exists $\phi_n\to\id_{\R^3}$ in $\Or_3(\R)$ such that $\phi_n^{-1}(\Span(D_{\infty}))\subset\Span(D_n)$. Let us introduce $H_n\coloneqq \phi_n^{-1}(\Span(D_{\infty}))$, $p_n\coloneqq p_{H_n}$ (see Notation \ref{not: orthogonal projection}), $D_n'\coloneqq p_n(D_n)$, and $\epsilon_n\coloneqq\sup_{x\in D_n}\{\de(x,p_n(x))\}$. Observe that $\epsilon_n\to0$. Indeed, given $x\in D_n$, we have $\de(x,p_n(x))\leq\de(x,p_{\infty}(x))+\de(p_n(x),p_{\infty}(x))$, where $p_{\infty}=p_{\Span(D_{\infty})}$. In particular, we have $\de(x,p_n(x))\leq\lvert p_n-p_{\infty}\rvert\Diam(D_n)+\dist_{\mathrm{H}}(D_n,D_{\infty})$. However, since $\phi_n\to\id_{\R^3}$, it is readily checked that $\lvert p_n-p_{\infty}\rvert\to0$; hence, since $\{\Diam(D_n)\}$ is bounded, we have $\epsilon_n\to0$ (note that the proof also works if $\dim(D_{\infty})=1$).\\
Now, let us prove that $\mathcal{D}^{\mathrm{eq}}(\Phi_{\mathbb{S}^2}(D'_n),\Phi_{\mathbb{S}^2}(D_{\infty}))\to0$. First of all, note that $\Phi_{\mathbb{S}^2}(D'_n)$ is equivariantly isometric to $\Phi_{\mathbb{S}^2}(D''_n)$, where $D''_n\coloneqq\phi_n(D_n')$; therefore, we only have to show $\mathcal{D}^{\mathrm{eq}}(\Phi_{\mathbb{S}^2}(D''_n),\Phi_{\mathbb{S}^2}(D_{\infty}))\to0$. Note that $\dist_{\mathrm{H}}(D_n',D_n'')\leq\Diam(D_n)\lvert \phi_n-\id\rvert$, $\dist_{\mathrm{H}}(D_n,D_n')\leq\epsilon_n$; hence, applying the triangle inequality, we have $\dist_{\mathrm{H}}(D_n'',D_{\infty})\leq\dist_{\mathrm{H}}(D_n,D_{\infty})+\Diam(D_n)\lvert \phi_n-\id\rvert+\epsilon_n\to0$. Moreover, observe that $D_n''\subset \Span(D_{\infty})$ (thanks to the properties of $\phi_n$). Thus, there exists $\mu_n\to0$ such that $(1-\mu_n)D_{\infty}\subset D_n''\subset (1+\mu_n)D_{\infty}$ and $(1-\mu_n)D_n''\subset D_{\infty}\subset (1+\mu_n)D_n''$. In particular, applying Lemma \ref{lem: 3 to 3} or \ref{lem: 2 to 2} (depending on $\dim(D_{\infty})$ being $2$ or $3$), we obtain $\mathcal{D}^{\mathrm{eq}}(\Phi_{\mathbb{S}^2}(D''_n),\Phi_{\mathbb{S}^2}(D_{\infty}))\to0$.\\
Observe now that if $\dim(D_{\infty})=\dim(D_n)$ then $D_n=D_n'$ by assumption on $\phi_n$. We will therefore assume that $\dim(D_n')=\dim(D_{\infty})<\dim(D_n)$ to avoid trivialities. Thanks to Lemma \ref{lem: 3 to 2}, and using $\epsilon_n\to0$, we obtain $\mathcal{D}^{\mathrm{eq}}(\Phi_{\mathbb{S}^2}(D_n),\Phi_{\mathbb{S}^2}(D_n'))\to0$.\\
We have now shown that $\mathcal{D}^{\mathrm{eq}}(\Phi_{\mathbb{S}^2}(D_n),\Phi_{\mathbb{S}^2}(D_n'))\to0$ and $\mathcal{D}^{\mathrm{eq}}(\Phi_{\mathbb{S}^2}(D'_n),\Phi_{\mathbb{S}^2}(D_{\infty}))\to0$, which implies $\mathcal{D}^{\mathrm{eq}}(\Phi_{\mathbb{S}^2}(D_n),\Phi_{\mathbb{S}^2}(D_{\infty}))\to0$ thanks to the modified triangle inequality satisfies by $\mathcal{D}^{\mathrm{eq}}$ (see \eqref{eq: modified triangle inequality}).
\end{proof}

\begin{Proposition}\label{prop: disc continuity}The map $\Psi_{\mathbb{D}}\colon\mathscr{K}_{2\leq3}/\Or_3(\R)\to\mathscr{M}_{\mathrm{curv}\geq0}(\mathbb{D})$ introduced in \eqref{eq: homeo disc} is a homeomorphism.
\end{Proposition}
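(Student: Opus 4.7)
The plan is to exploit the bijectivity of $\Psi_{\mathbb{D}}$ (Proposition \ref{prop: realization disc}) and prove that both $\Psi_{\mathbb{D}}$ and $\Psi_{\mathbb{D}}^{-1}$ are continuous, following closely the structure of Proposition \ref{prop: RP2 continuity} but with extra bookkeeping to track the auxiliary direction $\alpha$ and its associated reflection $r_\alpha$.

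To prove the continuity of $\Psi_{\mathbb{D}}$, I will fix a convergent sequence $(D_n,\alpha_n)\to(D_\infty,\alpha_\infty)$ in $\mathscr{K}_{2\leq3}$. By lower semi-continuity of dimension with respect to $\dha$, I may assume $\dim(D_n)\geq\dim(D_\infty)$ for every $n$. Then I will apply Lemma \ref{lem: orthogonal} (in its reinforced version with the $\alpha$-constraint) to obtain rotations $\phi_n\to\id_{\R^3}$ in $\Or_3(\R)$ with $\phi_n(\alpha_n)=\alpha_\infty$ and $\phi_n^{-1}(\Span(D_\infty))\subset\Span(D_n)$. Since $\phi_n r_{\alpha_n}\phi_n^{-1}=r_{\alpha_\infty}$, replacing $D_n$ by $\phi_n(D_n)$ leaves the class in $\mathscr{K}_{2\leq3}/\Or_3(\R)$ unchanged and reduces me to $\alpha_n=\alpha_\infty$ with $\Span(D_\infty)\subset\Span(D_n)$. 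A case analysis based on $\dim(D_\infty)\in\{2,3\}$ and on whether $\alpha_\infty\in\Span(D_\infty)$ or $\alpha_\infty\perp\Span(D_\infty)$ will then leverage the approximation lemmas of Section \ref{sec: Approximation Lemmas}. In the equidimensional case $\dim(D_n)=\dim(D_\infty)$, Lemma \ref{lem: 3 to 3} or Lemma \ref{lem: 2 to 2} provides a GH approximation between $\Phi_{\mathbb{S}^2}(D_n)$ and $\Phi_{\mathbb{S}^2}(D_\infty)$ whose underlying maps are radial, hence commute with $r_{\alpha_\infty}$ (both bodies being $r_{\alpha_\infty}$-invariant); these maps therefore restrict to a GH approximation between the upper halves $\Phi_{\mathbb{D}}(D_n,\alpha_\infty)$ and $\Phi_{\mathbb{D}}(D_\infty,\alpha_\infty)$. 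In the collapsing case $\dim(D_n)=3$, $\dim(D_\infty)=2$, I will pick $v_n$ to be the unit normal to $\Span(D_\infty)$ in $\R^3$ and combine Lemma \ref{lem: 3 to 2} (to approximate $\partial D_n$ by $\mathcal{D}K_n$ where $K_n\coloneqq p_{v_n}^\perp(D_n)$) with Lemma \ref{lem: 2 to 2} applied inside $\Span(D_\infty)$ to compare $K_n$ to $D_\infty$; the vertical maps $f_{D_n,v_n},g_{D_n,v_n}$ of Notation \ref{not: 3 to 2} will preserve the reflection $r_{\alpha_\infty}$ in either sub-case ($\alpha_\infty\in\Span(D_\infty)$ or $\alpha_\infty\perp\Span(D_\infty)$), so they restrict to the required GH approximations. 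Assembling the pieces via the modified triangle inequality \eqref{eq: modified triangle inequality} gives continuity.

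For the continuity of $\Psi_{\mathbb{D}}^{-1}$, the plan is to proceed by subsequence extraction. Given $[\dist_n]\to[\dist_\infty]$ in $\mathscr{M}_{\mathrm{curv}\geq0}(\mathbb{D})$, set $(D_n,\alpha_n)\coloneqq\Psi_{\mathbb{D}}^{-1}([\dist_n])$. GH-convergence forces $\Diam(\Phi_{\mathbb{D}}(D_n,\alpha_n))$ to converge, and a direct check in each case of Notation \ref{def: disc correspondence} (using Lemma \ref{lem: collapsing case}) shows that $\Diam(D_n)$ is controlled by $\Diam(\Phi_{\mathbb{D}}(D_n,\alpha_n))$, hence $\{\Diam(D_n)\}$ is bounded. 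Blaschke selection together with the compactness of $\mathbb{S}^2$ will then extract, from any subsequence, a further subsequence $(D_{n_k},\alpha_{n_k})\to(D^*,\alpha^*)$ in $\mathcal{K}^s\times\mathbb{S}^2$, and passing to the limit in $r_{\alpha_n}(D_n)=D_n$ gives $(D^*,\alpha^*)\in\mathscr{K}$. To rule out $\dim(D^*)\leq1$, I will invoke Lemma \ref{lem: case of surfaces} together with Theorem 1.2 of \cite{DePhilippis-Gigli_18}: the convergence upgrades to $[\mathbb{D},\dist_n,\mathcal{H}^2]\to[\mathbb{D},\dist_\infty,\mathcal{H}^2]$ in the mGH topology, so $\mathcal{H}^2(\Phi_{\mathbb{D}}(D_n,\alpha_n))\to\mathcal{H}^2(\Phi_{\mathbb{D}}(D_\infty,\alpha_\infty))>0$, whereas a dimension drop $\dim(D^*)\leq1$ would force these areas to vanish. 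Therefore $(D^*,\alpha^*)\in\mathscr{K}_{2\leq3}$, and the direct continuity already established forces $\Phi_{\mathbb{D}}(D_{n_k},\alpha_{n_k})\to\Phi_{\mathbb{D}}(D^*,\alpha^*)$ in GH; uniqueness of GH limits and the injectivity of $\Psi_{\mathbb{D}}$ (Proposition \ref{prop: realization disc}) then imply $[D^*,\alpha^*]=[D_\infty,\alpha_\infty]$, and a standard subsequence argument yields convergence of the full sequence.

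The main obstacle will be the case analysis behind the direct continuity of $\Psi_{\mathbb{D}}$: although each sub-case reduces to an approximation lemma from Section \ref{sec: Approximation Lemmas}, the equivariance of the approximation maps under $r_{\alpha_\infty}$ must be verified separately in each configuration. The trickiest transitions are those where $\dim(D_n)=3$ collapses to $\dim(D_\infty)=2$, because the combinatorial form of $\Phi_{\mathbb{D}}(D_\infty,\alpha_\infty)$ then jumps depending on whether $\alpha_\infty$ lies in $\Span(D_\infty)$ (giving a half-double $\{\mathcal{D}D_\infty\}_{\alpha_\infty}^+$) or is orthogonal to it (giving the planar region $D_\infty$), and one has to match these with the restriction of $f_{D_n,v_n}$ to the correct invariant subregion of $\partial D_n$.
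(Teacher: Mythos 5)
Your proposal follows the same overall skeleton as the paper's proof: reduce via Lemma \ref{lem: orthogonal} with the $\alpha$-constraint, invoke the approximation lemmas of Section \ref{sec: Approximation Lemmas}, check that the approximation maps respect $r_{\alpha_\infty}$ and restrict to the relevant half-regions, and for the inverse use Blaschke compactness plus injectivity of $\Psi_{\mathbb{D}}$. The direct-continuity argument is correct, and the equivariance/restriction observations you flag (radial and vertical projections commuting with $r_{\alpha_\infty}$) are exactly what the paper uses.

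The one genuine deviation, and the one place with a gap, is how you rule out the dimension drop $\dim(D^*)\leq1$ in the inverse-continuity step. The paper handles this by proving the direct continuity statement for the \emph{full} $\mathscr{K}$, not just $\mathscr{K}_{2\leq3}$; with that stronger statement in hand it gets $\Phi_{\mathbb{D}}(D_{n_k},\alpha_{n_k})\to\Phi_{\mathbb{D}}(D^*,\alpha^*)$ in GH \emph{regardless} of $\dim(D^*)$, so uniqueness of GH limits forces $\Phi_{\mathbb{D}}(D^*,\alpha^*)$ to be isometric to $\Phi_{\mathbb{D}}(D_\infty,\alpha_\infty)$, hence homeomorphic to $\mathbb{D}$, and this alone forces $\dim(D^*)\geq 2$. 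Your route — upgrading to mGH convergence via Lemma \ref{lem: case of surfaces} and Theorem 1.2 of \cite{DePhilippis-Gigli_18}, then arguing that dimension drop would force $\mathcal{H}^2\to 0$ — works in principle, but the assertion that $\dim(D^*)\leq 1$ forces $\mathcal{H}^2(\Phi_{\mathbb{D}}(D_{n_k},\alpha_{n_k}))\to 0$ is not proved and is not entirely immediate (for $\dim(D^*)=1$ you need some form of monotonicity of surface area under inclusion of convex bodies, or a direct estimate in terms of $\dha(D_{n_k},D^*)$). Moreover, since you only prove direct continuity on $\mathscr{K}_{2\leq3}$, your step ``the direct continuity already established forces $\Phi_{\mathbb{D}}(D_{n_k},\alpha_{n_k})\to\Phi_{\mathbb{D}}(D^*,\alpha^*)$'' can only be invoked \emph{after} $\dim(D^*)\geq2$ is secured, so the area estimate carries real weight; you should either supply it, or instead mimic the paper and extend your direct-continuity argument to the cases $\dim(D_\infty)\in\{0,1\}$ using Lemmas \ref{lem: 3 to 1}, \ref{lem: 2 to 1}, \ref{lem: 1 to 1}, and \ref{lem: collapsing case}, which is what those lemmas are there for.
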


\begin{proof}Let us recall that $\Psi_{\mathbb{D}}$ is a $1$-$1$ correspondence thanks to Proposition \ref{prop: realisation disc}. We will start by proving the continuity of $\Psi_{\mathbb{D}}$. Let us assume that $(D_n,\alpha_n)\to(D_{\infty},\alpha_{\infty})$ in $\mathscr{K}$ and let us prove that $\{\Phi_{\mathbb{D}}(D_n,\alpha_n)\}$ converges to $\Phi_{\mathbb{D}}(D_{\infty},\alpha_{\infty})$ in the GH topology (note that this is stronger than proving $\Psi_{\mathbb{D}}$'s continuity since here we do not ask $\dim(D_n)\in\{2,3\}$). Without loss of generality, we will assume that for every $n\in\N$, we have $\dim(D_{\infty})\leq\dim(D_n)$. Note that if $\dim(D_{\infty})=0$, then, thanks to Lemma \ref{lem: collapsing case}, we have $\Diam(\Phi_{\mathbb{D}}(D_n,\alpha_n))\leq\Diam(\Phi_{\mathbb{S}^2}(D_n))\leq\pi\Diam(D_n)\to0$. In particular, $\Phi_{\mathbb{D}}(D_n,\alpha_n)\to0=\Phi_{\mathbb{D}}(D_{\infty},\alpha_{\infty})$. Now we assume that $1\leq\dim(D_{\infty})$. Let $\phi_n\to\id$, $p_n\to p_{\infty}$, $D_n'$, $D_n''$, and $\epsilon_n\to0$ be defined exactly as in the proof of Proposition \ref{prop: RP2 continuity}, asking that $\phi_n(\alpha_n)=\alpha_{\infty}$ (which is possible thanks to Lemma \ref{lem: orthogonal}).\\
We first prove that $\GH(\Phi_{\mathbb{D}}(D_n',\alpha_n),\Phi_{\mathbb{D}}(D_{\infty},\alpha_{\infty}))\to0$. Observe that $\Phi_{\mathbb{D}}(D_n',\alpha_n)$ is isometric to $\Phi_{\mathbb{D}}(D_n'',\alpha_{\infty})$. Moreover, proceeding exactly as in the proof of Proposition \ref{prop: RP2 continuity}, we can show that there exists $\mu_n\to0$ such that $(1-\mu_n)D_{\infty}\subset D_n''\subset (1+\mu_n)D_{\infty}$ and $(1-\mu_n)D_n''\subset D_{\infty}\subset (1+\mu_n)D_n''$. Let $f_n\colon\Phi_{\mathbb{S}^2}(D_n'')\to\Phi_{\mathbb{S}^2}(D_{\infty})$ and $g_n\colon\Phi_{\mathbb{S}^2}(D_{\infty})\to\Phi_{\mathbb{S}^2}(D_n'')$ be defined as in Lemma \ref{lem: 3 to 3}, \ref{lem: 2 to 2}, or \ref{lem: 1 to 1} (depending on $\dim(D_{\infty})$ being $3$, $2$, or $1$). Observe that in every case, we have $f_n(\Phi_{\mathbb{D}}(D_n'',\alpha_{\infty}))\subset\Phi_{\mathbb{D}}(D_{\infty},\alpha_{\infty})$ and $g_n(\Phi_{\mathbb{D}}(D_{\infty},\alpha_{\infty}))\subset\Phi_{\mathbb{D}}(D_n'',\alpha_{\infty})$; in particular, $\GH(\Phi_{\mathbb{D}}(D_{\infty},\alpha_{\infty})),\Phi_{\mathbb{D}}(D_n'',\alpha_{\infty}))\leq\GH(\Phi_{\mathbb{S}^2}(D_{\infty}),\Phi_{\mathbb{S}^2}(D_n''))\to0$ (using the estimates of the lemmas mentioned before depending on the dimension of $\dim(D_{\infty})$).\\
Now, let us prove that $\GH(\Phi_{\mathbb{D}}(D_n,\alpha_n),\Phi_{\mathbb{D}}(D_n',\alpha_n))\to0$. Observe that if $\dim(D_n)=\dim(D_{\infty})$, then $\phi_n^{-1}(\Span(D_{\infty}))\subset\Span(D_n)$ implies that $D_n=D_n'$. Hence, we will assume that $\dim(D_{\infty})=\dim(D_n')<\dim(D_n)$ to avoid trivialities. Let $f_n\colon\Phi_{\mathbb{S}^2}(D_n)\to\Phi_{\mathbb{S}^2}(D_n')$ be defined as in Lemma \ref{lem: 3 to 2}, \ref{lem: 3 to 1}, or \ref{lem: 2 to 1} (depending on $(\dim(D_n),\dim(D_{\infty}))$ being equal to $(3,2)$, $(3,1)$, or $(2,1)$). It is readily checked that in every case we have $f_n(\Phi_{\mathbb{D}}(D_n,\alpha_n))=\Phi_{\mathbb{D}}(D_n',\alpha_n)$; therefore, thanks to the previous lemmas' estimates, we obtain $\GH(\Phi_{\mathbb{D}}(D_n,\alpha_n),\Phi_{\mathbb{D}}(D_n',\alpha_n))\to0$. This concludes the proof of $\Psi_{\mathbb{D}}$'s continuity.\\

Now, we are going to prove that $\Psi_{\mathbb{D}}^{-1}$ is continuous. Let us assume that $[\mathbb{D},\dist_n]\to[\mathbb{D},\dist_{\infty}]$ w.r.t. the GH topology in $\mathscr{M}_{\mathrm{curv}\geq0}(\mathbb{D})$. Thanks to Proposition \ref{prop: realisation disc}, for every $n\in\N\cup\{\infty\}$ there exists $(D_n,\alpha_n)\in\mathscr{K}_{2\leq3}$ such that $(\mathbb{D},\dist_n)$ is isometric to $\Phi(D_n,\alpha_n)$. We need to show that $[D_n,\alpha_n]\to[D_{\infty},\alpha_{\infty}]$ in $\mathscr{K}_{2\leq3}/\Or_3(\R)$. Note that, since $\mathscr{K}_{2\leq3}/\Or_3(\R)$ is a metric space, it is sufficient to prove that every subsequence of $\{[D_n,\alpha_n]\}$ admits a subsequence converging to $[D_{\infty},\alpha_{\infty}]$. Reindexing the sequence if necessary, let us just prove that $\{[D_n,\alpha_n]\}$ admits a subsequence converging to $[D_{\infty},\alpha_{\infty}]$.\\
First of all, observe that, since $\mathbb{S}^2$ is compact, we can assume that $\alpha_n\to\alpha\in\mathbb{S}^2$ (reindexing the sequence if necessary). Observe that if $\dim(D_n)=3$, then $\Diam(D_n)\leq\Diam(\partial D_n)\leq2\Diam(\partial D_n\cap H_{\alpha_n}^+)=2\Diam(\mathbb{D},\dist_n)$. Moreover, if $\dim(D_n)=2$ and $\alpha_n\perp\Span(D_n)$ then $\Diam(D_n)=\Diam(\mathbb{D},\dist_n)$, if $\alpha_n\in\Span(D_n)$ then $\Diam(D_n)\leq\Diam(\mathcal{D}D_n)\leq2\Diam(\{\mathcal{D}D_n\}_{\alpha_n}^+)=2\Diam(\mathbb{D},\dist_n)$.
Since $\{\Diam(\mathbb{D},\dist_n)\}_n$ is bounded, we can conclude that there exists $r\in(0,\infty)$ such that, for every $n\in\N$, we have $\Diam(D_n)\leq r$. Observe that, for every $n\in\N$, we have $0=s(D_n)\in D_n$. Hence $\{D_n\}$ is a sequence of convex compact subsets of $\overline{B}_r(0)$. Thanks to the Blaschke Theorem (see Theorem 7.3.8 of \cite{Burago-Ivanov_01}), we can assume (passing to a subsequence if necessary) that $D_n\to D$ w.r.t. $\dha$, where $D$ is a compact convex subset of $\overline{B}_r(0)$. Note that since $\alpha_n\to\alpha$, it is readily checked that $D_n^{\alpha_n}\to D^{\alpha}$ (see Notation \ref{not: symmetrisation}). However, $D_n^{\alpha_n}=D_n\to D$; therefore $(D,\alpha)\in\mathscr{K}$. In order to conclude, we just need to prove that there exists $\phi\in\Or_3(\R)$ such that $\phi(D)=D_{\infty}$ and $\phi(\alpha)=\alpha_{\infty}$.\\
Thanks to the first part of the proof, $(D_n,\alpha_n)\to(D,\alpha)$ implies that $\Phi_{\mathbb{D}}(D_n,\alpha_n)\to\Phi_{\mathbb{D}}(D,\alpha)$. However, $\Phi_{\mathbb{D}}(D_n,\alpha_n)\to\Phi_{\mathbb{D}}(D_{\infty},\alpha_{\infty})$ by assumption. Hence, $\Phi_{\mathbb{D}}(D_{\infty},\alpha_{\infty})$ is isometric to $\Phi_{\mathbb{D}}(D,\alpha)$. However, $\Phi_{\mathbb{D}}(D_{\infty},\alpha_{\infty})$ is homeomorphic to $\mathbb{D}$; therefore, we necessarily have $2\leq\dim(D)$, i.e. $(D,\alpha)\in\mathscr{K}_{2\leq3}$. Thus, since $\Psi_{\mathbb{D}}\colon\mathscr{K}_{2\leq3}/\Or_3(\R)\to\mathscr{M}_{\mathrm{curv}\geq0}(\mathbb{D})$ is a $1$-$1$ correspondence, we have $[D,\alpha]=[D_{\infty},\alpha_{\infty}]$, which concludes the proof.\end{proof}

\subsubsection{Moduli spaces of $\RCD(0,2)$-structures}

We finally show that the moduli spaces of $\RCD(0,2)$-structures $\M_{0,2}(\mathbb{S}^2)$, $\M_{0,2}(\R\mathbb{P}^2)$, and $\M_{0,2}(\mathbb{D})$ are contractible.

\begin{Proposition}\label{prop: contractibility S2}The moduli space $\M_{0,2}(\mathbb{S}^2)$ of $\RCD(0,2)$-structures on $\mathbb{S}^2$ is homeomorphic to $\R\times\{\Ks/\Or_3(\R)\}$. In particular, $\M_{0,2}(\mathbb{S}^2)$ is contractible.
\end{Proposition}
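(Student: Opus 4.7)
The plan is to combine Lemma~\ref{lem: case of surfaces} with Theorem~\ref{th: homeo S2} to identify the moduli space, and then to exhibit an explicit contraction of $\Ks/\Or_3(\R)$ onto a point by Minkowski interpolation with the unit ball.

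First, since $\mathbb{S}^2$ is a closed topological surface admitting an $\RCD(0,2)$-structure, I will apply Lemma~\ref{lem: case of surfaces} to get a homeomorphism $\M_{0,2}(\mathbb{S}^2)\simeq\R_{>0}\times\mathscr{M}_{\mathrm{curv}\geq 0}(\mathbb{S}^2)$, the $\R_{>0}$-factor being the mass ratio $\m(\mathbb{S}^2)/\mathcal{H}^2(\mathbb{S}^2)$. Theorem~\ref{th: homeo S2} then identifies the second factor with $\Ks/\Or_3(\R)$ via $\Psi_{\mathbb{S}^2}$. Composing these two homeomorphisms and using $\R_{>0}\simeq\R$ (say by $\log$) yields the first claim $\M_{0,2}(\mathbb{S}^2)\simeq\R\times\Ks/\Or_3(\R)$.

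It remains to show that $\Ks/\Or_3(\R)$ is contractible, which I will do by constructing a deformation onto the class of the unit ball. Let $\overline{B}$ denote the closed Euclidean unit ball of $\R^3$ and define $H\colon[0,1]\times\Ks\to\Ks$ by $H(t,D)\coloneqq(1-t)D+t\overline{B}$, where $+$ is the Minkowski sum. The Steiner point is additive under Minkowski sum and homogeneous under positive scalar multiplication, and $s(D)=s(\overline{B})=0$, so $s(H(t,D))=0$. Moreover, the Steiner point of any convex body lies in its relative interior, so $0\in D$; hence for every $t\in(0,1]$ we have $t\overline{B}\subset H(t,D)$, forcing $\dim H(t,D)=3$, while at $t=0$ we get $H(0,D)=D\in\Ks$. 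So $H$ indeed takes values in $\Ks$, is continuous with respect to $\dha$ (both Minkowski sum and scalar multiplication being continuous for the Hausdorff distance), and is $\Or_3(\R)$-equivariant since every $\phi\in\Or_3(\R)$ is linear and preserves $\overline{B}$. Therefore $H$ descends to a continuous homotopy on the quotient from $\id$ to the constant map at $[\overline{B}]$, proving contractibility of $\Ks/\Or_3(\R)$, and hence of $\M_{0,2}(\mathbb{S}^2)$ as a product of contractible spaces.

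The only subtlety is ensuring the deformation never escapes $\Ks$ by collapsing the dimension to $0$ or $1$; this is exactly where the relative-interior property of the Steiner point is invoked to guarantee $0\in D$, so that the ball component $t\overline{B}$ genuinely survives inside the Minkowski sum. Beyond this point, the argument is a direct assembly of the general identification results already established earlier in the paper.
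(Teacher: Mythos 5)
Your proposal is correct and essentially matches the paper's proof: both combine Lemma~\ref{lem: case of surfaces} with Theorem~\ref{th: homeo S2} to identify $\M_{0,2}(\mathbb{S}^2)$ with $\R\times[\Ks/\Or_3(\R)]$, and both contract $\Ks/\Or_3(\R)$ via the same $\Or_3(\R)$-equivariant Minkowski interpolation $H(t,D)=(1-t)D+t\mathbb{B}$ toward the unit ball. The only minor difference is that the paper verifies continuity of $H$ by explicit Hausdorff-distance estimates whereas you invoke the standard continuity of Minkowski sum and scalar multiplication; you are also slightly more explicit in checking that $H$ stays in $\Ks$ (Steiner point at the origin, dimension in $\{2,3\}$), a point the paper leaves implicit.
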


\begin{proof}Thanks to Lemma \ref{lem: case of surfaces} and Theorem \ref{th: homeo S2}, $\M_{0,2}(\mathbb{S}^2)$ is homeomorphic to $\R_{>0}\times\{\Ks/\Or_3(\R)\}$ which is itself homeomorphic to $\R\times\{\Ks/\Or_3(\R)\}$. To conclude, we only need to show that $\Ks/\Or_3(\R)$ is contractible. To do so, we define $H\colon I\times\Ks\to\Ks$ by $H(t,D)\coloneqq t\mathbb{B}+(1-t)D$ for every $(t,D)\in I\times\Ks$ (where $+$ is the Minkowski sum and $\mathbb{B}$ is the unit ball in $\R^3$). Observe that $H$ is $\Or_3(\R)$-equivariant, satisfies $H(0,\cdot)=\id_{\Ks}$, and $H(1,\cdot)$ is the constant function equal to $\mathbb{B}$. Hence, to conclude we just need to show that $H$ is continuous. Let $D_1,D_2\in\Ks$ and let $t,s\in I$. Assume that $y\in\mathbb{B}$ and $z\in D_1$, and let $x_t\coloneqq ty+(1-t)z\in H(t,D_1)$ and $x_s\coloneqq sy +(1-s)z\in H(s,D_1)$. Observe that $\de(x_t,x_s)=\lvert t-s\rvert \de(y,z)\leq\lvert t-s\rvert(1+\Diam(D_1))$. Hence, we have:
\begin{equation}\label{eq: continuity of H (I)}
\dha(H(t,D_1),H(s,D_1))\leq\lvert t-s\rvert(1+\Diam(D_1)).
\end{equation}
Observe that there exists $z'\in D_2$ such that $\de(z,z')\leq\epsilon$ (where $\epsilon>0$ is any positive number such that $\dha(D_1,D_2)<\epsilon$). Denoting $x'_s\coloneqq sy+(1-s)z'\in H(s,D_2)$, we have $\de(x_s,x'_s)=(1-s)\de(z,z')\leq(1-s)\epsilon$. Therefore, we have $\dha(H(D_1,s),H(D_2,s))\leq(1-s)\epsilon$ and, letting $\epsilon$ go to $\dha(D_1,D_2)$, we obtain:
\begin{equation}\label{eq: continuity of H (II)}
\dha(H(s,D_1),H(s,D_2))\leq(1-s)\dha(D_1,D_2).
\end{equation}
Applying the triangle inequality to $\dha$ and using \eqref{eq: continuity of H (I)} and \eqref{eq: continuity of H (II)} we finally get:
$$
\dha(H(t,D_1),H(s,D_2))\leq\lvert t-s\rvert(1+\Diam(D_1))+(1-s)\dha(D_1,D_2).
$$
In particular, $H$ is continuous.
\end{proof}



\begin{Proposition}\label{prop: projective plane}The moduli space $\M_{0,2}(\R\mathbb{P}^2)$ of $\RCD(0,2)$-structures on $\R\mathbb{P}^2$ is homeomorphic to $\R\times \{\tK/\Or_3(\R)\}$; in particular, it is contractible.
\end{Proposition}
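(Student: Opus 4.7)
The plan is to chain together three homeomorphisms already established in the preliminaries. Since $\R\mathbb{P}^2$ has universal cover $\mathbb{S}^2$ (which is compact) with $\overline{\pi}_1(\R\mathbb{P}^2)=\{\pm1\}$ acting by the antipodal map, Theorem \ref{th: equiv homeo} gives a homeomorphism $p^*\colon\M_{0,2}(\R\mathbb{P}^2)\to\M_{0,2}^{\mathrm{eq}}(\mathbb{S}^2)$. The equivariant statement in Lemma \ref{lem: case of surfaces} yields $\M_{0,2}^{\mathrm{eq}}(\mathbb{S}^2)\simeq\R_{>0}\times\mathscr{M}_{\mathrm{curv}\geq0}^{\mathrm{eq}}(\mathbb{S}^2)$, and Proposition \ref{prop: RP2 continuity} identifies $\mathscr{M}_{\mathrm{curv}\geq0}^{\mathrm{eq}}(\mathbb{S}^2)$ with $\tK/\Or_3(\R)$ via $\Psi_{\mathbb{S}^2}^{\mathrm{eq}}$. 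Composing these three homeomorphisms and using $\R_{>0}\simeq\R$ produces the desired identification $\M_{0,2}(\R\mathbb{P}^2)\simeq\R\times\tK/\Or_3(\R)$.

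For contractibility, since $\R$ is contractible it suffices to verify that $\tK/\Or_3(\R)$ is contractible. I would adapt the homotopy used in the proof of Proposition \ref{prop: contractibility S2}, defining $H\colon I\times\tK\to\tK$ by $H(t,D)\coloneqq t\mathbb{B}+(1-t)D$, where $\mathbb{B}$ is the closed Euclidean unit ball of $\R^3$ and $+$ is the Minkowski sum. Three properties need to be checked: first, that $H$ takes values in $\tK$, which follows because the Steiner point is additive under Minkowski sum (so $s(H(t,D))=0$), because $-\mathbb{B}=\mathbb{B}$ and $-D=D$ give $-H(t,D)=H(t,D)$, and because $\dim H(t,D)\in\{2,3\}$ (equal to $3$ for $t>0$, equal to $\dim(D)$ for $t=0$); second, that $H$ is $\Or_3(\R)$-equivariant, because every $\phi\in\Or_3(\R)$ satisfies $\phi(\mathbb{B})=\mathbb{B}$ and Minkowski sum commutes with linear isometries; third, that $H$ is continuous with respect to the Hausdorff distance, which is the exact same estimate already derived in the proof of Proposition \ref{prop: contractibility S2}. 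Passing to the quotient, $H$ descends to a deformation retract of $\tK/\Or_3(\R)$ onto $\{[\mathbb{B}]\}$.

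The main obstacle is essentially nil, as nearly all substantive work has been packaged into the earlier propositions and lemmas. The only subtlety is verifying that the Minkowski-sum homotopy stays inside $\tK$ rather than merely $\Ks$, which reduces to the two one-line observations above concerning additivity of the Steiner point and the stability of the relation $-D=D$ under convex combinations with $\mathbb{B}$. Once those are in place, both the homeomorphism and the contractibility statement follow immediately from the machinery already assembled.
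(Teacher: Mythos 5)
Your proposal is correct and follows exactly the same route as the paper's proof: chaining Theorem \ref{th: equiv homeo}, Lemma \ref{lem: case of surfaces}, and Proposition \ref{prop: RP2 continuity}, then observing that the Minkowski-sum homotopy $H$ from the proof of Proposition \ref{prop: contractibility S2} restricts to $\tK$. The only difference is that you spell out (correctly) the one-line verifications that $H$ preserves $\tK$, which the paper leaves implicit.
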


\begin{proof}First of all, note that thanks to Theorem \ref{th: equiv homeo}, the lift map $p^*\colon\M_{0,2}(\R\mathbb{P}^2)\to\M_{0,2}^{\mathrm{eq}}(\mathbb{S}^{2})$ is a homeomorphism (where $p\colon \mathbb{S}^2\to\mathbb{S}^2/\{\pm1\}=\R\mathbb{P}^2$ is the quotient map). Moreover, applying Lemma \ref{lem: case of surfaces}, $\M_{0,2}^{\mathrm{eq}}(\mathbb{S}^{2})$ is homeomorphic to $\R_{>0}\times\mathscr{M}^{\mathrm{eq}}_{\mathrm{curv\geq0}}(\mathbb{S}^2)$. Thus, thanks to Proposition \ref{prop: RP2 continuity}, $\M_{0,2}^{\mathrm{eq}}(\mathbb{S}^{2})$ is homeomorphic to $\R\times \{\tK/\Or_3(\R)\}$. Now, observe that the map $H\colon I\times\Ks\to\Ks$ introduced in the proof of Proposition \ref{prop: contractibility S2} satisfies $H(I\times\tK)\subset\tK$. Hence, $\tK/\Or_3(\R)$ is contractible and, a fortiori, $\M_{0,2}(\R\mathbb{P}^2)$ is contractible.
\end{proof}

\begin{Proposition}\label{prop: closed disc}The moduli space $\M_{0,2}(\mathbb{D})$ of $\RCD(0,2)$-structures on $\mathbb{D}$ is homeomorphic to $\R\times\{\mathscr{K}_{2\leq3}/\Or_3(\R)\}$ (where $\mathscr{K}_{2\leq3}$ is introduced in Notation \ref{not: moduli space of the closed disc}); in particular, it is contractible.
\end{Proposition}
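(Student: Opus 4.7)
The plan is to combine the two main ingredients already established in the previous sections and then exhibit an explicit deformation retraction at the level of convex bodies, mimicking what was done for $\mathbb{S}^2$ and $\R\mathbb{P}^2$. First, since $\mathbb{D}$ is a topological surface (with boundary) that admits an $\RCD(0,2)$-structure, Lemma \ref{lem: case of surfaces} yields a homeomorphism
$$
\M_{0,2}(\mathbb{D}) \simeq \R_{>0} \times \mathscr{M}_{\mathrm{curv}\geq0}(\mathbb{D}),
$$
where the $\R_{>0}$-factor records the density constant $\m(\mathbb{D})/\mathcal{H}^2(\mathbb{D})$. Combining this with Proposition \ref{prop: disc continuity}, which identifies $\mathscr{M}_{\mathrm{curv}\geq0}(\mathbb{D})$ with $\mathscr{K}_{2\leq3}/\Or_3(\R)$ via $\Psi_{\mathbb{D}}$, we obtain the claimed homeomorphism
$$
\M_{0,2}(\mathbb{D}) \simeq \R \times \bigl(\mathscr{K}_{2\leq3}/\Or_3(\R)\bigr).
$$

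It remains to show that $\mathscr{K}_{2\leq3}/\Or_3(\R)$ is contractible, and in the spirit of propositions \ref{prop: contractibility S2} and \ref{prop: projective plane} I would define a Minkowski-sum deformation. Concretely, let $\mathbb{B}$ denote the closed unit ball in $\R^3$ and set
$$
H\colon I\times\mathscr{K}_{2\leq3}\to\mathscr{K}_{2\leq3},\qquad H(t,(D,\alpha))\coloneqq\bigl(t\mathbb{B}+(1-t)D,\alpha\bigr).
$$
One must verify that $H$ indeed lands in $\mathscr{K}_{2\leq3}$. For this, notice that the Steiner point is additive with respect to Minkowski sums and both $\mathbb{B}$ and $D$ have Steiner point at the origin, so $s(t\mathbb{B}+(1-t)D)=0$; also the dimension of $t\mathbb{B}+(1-t)D$ equals $3$ whenever $t>0$ and equals $\dim(D)\in\{2,3\}$ when $t=0$; finally $r_\alpha$ is linear and fixes $\mathbb{B}$, hence $r_\alpha(t\mathbb{B}+(1-t)D)=t\mathbb{B}+(1-t)D$ whenever $r_\alpha(D)=D$. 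Thus $H(t,(D,\alpha))\in\mathcal{K}^{\alpha}\cap\mathcal{K}^s_{2\leq3}$ as required.

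Next, $H$ is continuous: continuity in the $\mathbb{S}^2$-factor is trivial, and continuity in the $\mathcal{K}^s$-factor follows from exactly the same Hausdorff-distance estimate as in the proof of Proposition \ref{prop: contractibility S2} (namely $\dha(H(t,D_1),H(s,D_2)) \leq |t-s|(2+\Diam(D_1)+\Diam(D_2))+(1-s)\dha(D_1,D_2)$). Moreover, for any $\phi\in\Or_3(\R)$ one has $\phi(\mathbb{B})=\mathbb{B}$ and Minkowski sums commute with linear isometries, so
$$
H\bigl(t,(\phi(D),\phi(\alpha))\bigr)=\bigl(t\mathbb{B}+(1-t)\phi(D),\phi(\alpha)\bigr)=\bigl(\phi(t\mathbb{B}+(1-t)D),\phi(\alpha)\bigr),
$$
which shows $H$ is $\Or_3(\R)$-equivariant and therefore descends to a continuous map $\overline{H}\colon I\times[\mathscr{K}_{2\leq3}/\Or_3(\R)]\to\mathscr{K}_{2\leq3}/\Or_3(\R)$. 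Since $\overline{H}(0,\cdot)=\id$ and $\overline{H}(1,(D,\alpha))=[(\mathbb{B},\alpha)]$ does not depend on $[(D,\alpha)]$ (indeed $\Or_3(\R)$ acts transitively on $\mathbb{S}^2$, so all classes $[(\mathbb{B},\alpha)]$ coincide), the quotient $\mathscr{K}_{2\leq3}/\Or_3(\R)$ is contractible. Thus $\M_{0,2}(\mathbb{D})$ is homeomorphic to $\R\times(\mathscr{K}_{2\leq3}/\Or_3(\R))$ and is contractible. No genuine obstacle is expected here: the only subtlety is to confirm that the Minkowski homotopy preserves the reflection symmetry encoded by $\alpha$, which is immediate from the linearity of $r_\alpha$.
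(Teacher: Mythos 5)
Your proposal is correct and follows essentially the same route as the paper: reduce via Lemma \ref{lem: case of surfaces} and Proposition \ref{prop: disc continuity} to the contractibility of $\mathscr{K}_{2\leq3}/\Or_3(\R)$, and then contract via the Minkowski homotopy $H_{\mathbb{D}}(t,(D,\alpha))=(t\mathbb{B}+(1-t)D,\alpha)$, using transitivity of $\Or_3(\R)$ on $\mathbb{S}^2$ to identify the endpoint. Your explicit verification that the homotopy preserves the Steiner-point, dimension, and $r_\alpha$-symmetry constraints is a welcome detail that the paper leaves implicit.
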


\begin{proof}Observe that thanks to Lemma \ref{lem: case of surfaces}, $\M_{0,2}(\mathbb{D})$ is homeomorphic to $\R_{>0}\times\mathscr{M}_{\mathrm{curv\geq0}}(\mathbb{D})$ which is itself homeomorphic to $\R\times\mathscr{M}_{\mathrm{curv\geq0}}(\mathbb{D})$. Thus, thanks to Proposition \ref{prop: disc continuity}, $\M_{0,2}(\mathbb{D})$ is homeomorphic to $\R\times\{\mathscr{K}_{2\leq3}/\Or_3(\R)\}$. Now let us consider the map $H_{\mathbb{D}}\colon I\times\mathscr{K}_{2\leq3}\to\mathscr{K}_{2\leq3}$ defined by $H_{\mathbb{D}}(t,D,\alpha)\coloneqq(H(t,D),\alpha)$, where $H\colon I\times\Ks\to\Ks$ is introduced in the proof of Proposition \ref{prop: contractibility S2}. Observe that $H_{\mathbb{D}}$ is continuous since $H$ is continuous. Moreover, note that $H_{\mathbb{D}}$ is equivariant w.r.t. the action of $\Or_3(\R)$ on $\mathscr{K}_{2\leq3}$. To conclude, note that since $\Or_3(\R)$ acts transitively on $\mathbb{S}^2$, we have $[H_{\mathbb{D}}(1,\cdot)]\equiv[\mathbb{B},(0,0,1)]\in\mathscr{K}_{2\leq3}/\Or_3(\R)$; hence, $\mathscr{K}_{2\leq3}/\Or_3(\R)$ is contractible.
\end{proof}

\newpage

\bibliographystyle{abbrv}
\bibliography{biblio}

\begin{thebibliography}{10}

\bibitem{Alestalo-Trotsenko-Vaisala_01}
P.~Alestalo, D.~A. Trotsenko, and J.~V\"{a}is\"{a}l\"{a}.
\newblock Isometric approximation.
\newblock {\em Israel {J}ournal of {M}athematics}, 125(1):61--82, 2001.

\bibitem{Ambrosio-Gigli-Mondino-Rajala_15}
L.~Ambrosio, N.~Gigli, A.~Mondino, and T.~Rajala.
\newblock Riemannian {R}icci curvature lower bounds in metric measure spaces
  with {$\sigma$}-finite measure.
\newblock {\em Transactions of the American Mathematical Society},
  367(7):4661--4701, 2015.

\bibitem{Ambrosio-Gigli-Savar_14}
L.~Ambrosio, N.~Gigli, and G.~Savar\'{e}.
\newblock Metric measure spaces with {R}iemannian {R}icci curvature bounded
  from below.
\newblock {\em Duke Mathematical Journal}, 163(7):1405--1490, 2014.

\bibitem{Ambrosio-Honda-Tewodrose_18}
L.~Ambrosio, S.~Honda, and D.~Tewodrose.
\newblock Short-time behavior of the heat kernel and {W}eyl's law on
  {RCD}*({K,N}) spaces.
\newblock {\em Annals of {G}lobal {A}nalysis and {G}eometry}, 53(1):97--119,
  2018.

\bibitem{Ambrosio-Mondino-Savar_19}
L.~Ambrosio, A.~Mondino, and G.~Savar\'{e}.
\newblock Nonlinear diffusion equations and curvature conditions in metric
  measure spaces.
\newblock {\em Memoirs of the American Mathematical Society}, 262(1270):1--134,
  2019.

\bibitem{Belegradek_18}
I.~Belegradek.
\newblock The {G}romov--{H}ausdorff hyperspace of nonnegatively curved
  $2$-spheres.
\newblock {\em Proceedings of the {A}merican {M}athematical {S}ociety},
  146(4):1757--1764, 2018.

\bibitem{Belegradek_II_18}
I.~Belegradek.
\newblock Hyperspaces of smooth convex bodies up to congruence.
\newblock {\em Advances in {M}athematics (New York. 1965)}, 332:176--198, 2018.

\bibitem{Bettiol-Derdzinski-Piccione_18}
R.~Bettiol, A.~Derdzinski, and P.~Piccione.
\newblock Teichm\"{u}ller theory and collapse of flat manifolds.
\newblock {\em Annali di Matematica Pura ed Applicata (1923 -)},
  197(4):1247--1268, 2018.

\bibitem{Bru-Semola_20}
E.~Bru\`{e} and D.~Semola.
\newblock Constancy of the dimension for {RCD}({K},{N}) spaces via regularity
  of {L}agrangian flows.
\newblock {\em Communications on Pure and Applied Mathematics},
  73(6):1141--1204, 2020.

\bibitem{Burago-Ivanov_01}
D.~Burago, I.~D. Burago, and S.~Ivanov.
\newblock {\em A course in metric geometry}.
\newblock Graduate {S}tudies in {M}athematics. v. 33. American Mathematical
  Society, Providence, R.I., 2001.

\bibitem{Burago-Zalgaller_92}
I.~D. Burago and V.~A. Zalgaller.
\newblock {\em Geometry III : {T}heory of surfaces}.
\newblock Encyclopaedia of {M}athematical {S}ciences ; v. 48. Springer-Verlag,
  Berlin ; New York, 1992.

\bibitem{Burago-Gromov-Perelman_92}
Y.~Burago, M.~Gromov, and G.~Perelman.
\newblock A.{D}. {A}lexandrov spaces with curvature bounded below.
\newblock {\em Russian {M}athematical {S}urveys}, 47(2):1--58, 1992.

\bibitem{Cavalletti-Milman_21}
F.~Cavalletti and E.~Milman.
\newblock The globalization theorem for the curvature-dimension condition.
\newblock {\em Inventiones {M}athematicae}, 226(1):1--137, 2021.

\bibitem{Charlap_86}
L.~S. Charlap.
\newblock {\em Bieberbach groups and flat manifolds}.
\newblock Universitext. Springer-Verlag, New York ; London, 1986.

\bibitem{DePhilippis-Gigli_18}
G.~De~Philippis and N.~Gigli.
\newblock Non-collapsed spaces with {R}icci curvature bounded from below.
\newblock {\em Journal de l'Ecole Polytechnique - Mathematiques}, 5:613--650,
  2018.

\bibitem{Perez_20}
K.~Garcia.
\newblock {\em Spaces and Moduli Spaces of Flat Riemannian Metrics on Closed
  Manifolds}.
\newblock Karlsruher Institut f{\"u}r Technologie (KIT), 2020.
\newblock Ph.D. thesis.

\bibitem{Gigli_13}
N.~Gigli.
\newblock An overview of the proof of the splitting theorem in spaces with
  non-negative {R}icci curvature.
\newblock {\em Analysis and {G}eometry in {M}etric {S}paces}, 2(1):169--213,
  2014.

\bibitem{Gigli_15}
N.~Gigli.
\newblock On the differential structure of metric measure spaces and
  applications.
\newblock {\em Memoirs of the American Mathematical Society}, 236(1113):1,
  2015.

\bibitem{Gigli-Pasqualetto_21}
N.~Gigli and E.~Pasqualetto.
\newblock Behaviour of the reference measure on {RCD} spaces under charts.
\newblock {\em Communications in Analysis and Geometry}, 29(6):1391--1414,
  2021.

\bibitem{Hatcher_02}
A.~Hatcher, C.~U. Press, and C.~U.~D. of~Mathematics.
\newblock {\em Algebraic Topology}.
\newblock Algebraic Topology. Cambridge University Press, 2002.

\bibitem{Honda_20}
S.~Honda.
\newblock New differential operator and noncollapsed {RCD} spaces.
\newblock {\em Geometry {\&} Topology}, 24(4):2127--2148, 2020.

\bibitem{Kell-Mondino_18}
M.~Kell and A.~Mondino.
\newblock On the volume measure of non-smooth spaces with {R}icci curvature
  bounded below.
\newblock {\em Annali della {S}cuola {N}ormale {S}uperiore di {P}isa, Classe di
  {S}cienze}, 18(2):593--610, 2018.

\bibitem{Kitabeppu-Lakzian_16}
Y.~Kitabeppu and L.~Sajjad.
\newblock Characterization of low dimensional {RCD}({K},{N}) spaces.
\newblock {\em Analysis and Geometry in Metric Spaces}, 4(1), 2016.

\bibitem{Kutateladze_05}
S.~Kutateladze.
\newblock {\em A.D. Alexandrov: Selected Works Part II: Intrinsic Geometry of
  Convex Surfaces}.
\newblock Classics of Soviet Mathematics. CRC Press, 2005.

\bibitem{Lott-Villani_09}
J.~Lott and C.~Villani.
\newblock {R}icci curvature for metric-measure spaces via optimal transport.
\newblock {\em Annals of Mathematics}, 169(3):903--991, 2009.

\bibitem{Lytchak-Stadler_22}
A.~Lytchak and S.~Stadler.
\newblock {R}icci curvature in dimension 2.
\newblock {\em Journal of the European Mathematical Society : JEMS}, 2022.

\bibitem{Massey_77}
W.~S. Massey.
\newblock {\em Algebraic topology : An introduction}.
\newblock Graduate Texts in Mathematics ; 56. Springer, New York, 1977.

\bibitem{Mondino-Naber_19}
A.~Mondino and A.~Naber.
\newblock Structure theory of metric measure spaces with lower {R}icci
  curvature bounds.
\newblock {\em Journal of the European Mathematical Society : JEMS},
  21(6):1809--1854, 2019.

\bibitem{Mondino-Navarro_22}
A.~Mondino and D.~Navarro.
\newblock Moduli spaces of compact {RCD}(0,{N})-structures.
\newblock {\em Mathematische Annalen}, 2022.
\newblock Open access.

\bibitem{Mondino-Wei_19}
A.~Mondino and G.~Wei.
\newblock On the universal cover and the fundamental group of an
  {RCD}*({K},{N})-space.
\newblock {\em Journal f\"ur die reine und angewandte {M}athematik ({C}relles
  {J}ournal)}, 2019(753):211--237, 2019.

\bibitem{DePhilippis-Marchese-Rindler_17}
G.~D. Philippis, A.~Marchese, and F.~Rindler.
\newblock {\em On a conjecture of Cheeger}, pages 145--155.
\newblock De Gruyter Open Poland, 2017.

\bibitem{Shen-Wei_91}
Z.~Shen and G.~Wei.
\newblock On {R}iemannian manifolds of almost nonnegative curvature.
\newblock {\em Indiana University Mathematics Journal}, 40(2):551--565, 1991.

\bibitem{Sturm_I_06}
K.-T. Sturm.
\newblock On the geometry of metric measure spaces.
\newblock {\em Acta Mathematica}, 196(1):65--131, 2006.

\bibitem{Sturm_II_06}
K.-T. Sturm.
\newblock On the geometry of metric measure spaces. {II}.
\newblock {\em Acta Mathematica}, 196(1):133--177, 2006.

\bibitem{Tuschmann-Wiemeler_21}
W.~Tuschmann and M.~Wiemeler.
\newblock On the topology of moduli spaces of non-negatively curved
  {R}iemannian metrics.
\newblock {\em Mathematische {A}nnalen}, 384(3-4):1629--1651, 2022.

\bibitem{Tuschmann-Wraith_15}
W.~Tuschmann and D.~Wraith.
\newblock {\em Moduli Spaces of Riemannian Metrics}.
\newblock Oberwolfach Seminars. Springer Basel, 2015.

\bibitem{Villani_09}
C.~Villani.
\newblock {\em Optimal transport : Old and new}.
\newblock Grundlehren der Mathematischen Wissenschaften ; 338. Springer,
  Berlin, 2009.

\end{thebibliography}

\end{document}